\newtheorem{proposition}{Proposition}[section]
\newtheorem{lemma}[proposition]{Lemma}
\newtheorem{theorem}[proposition]{Theorem}
\numberwithin{equation}{section}
\newcommand{\bs}[1]{\boldsymbol{#1}}
\newcommand{\R}{{\mathbb R}}
\newcommand{\Rsym}{{\mathbb R^{3\times 3}_{\mathrm{sym}}}}
\newcommand{\mb}{\mathbf}
\newcommand{\mbb}{\mathbb}
\newcommand{\mr}{\mathrm}
\newcommand{\mc}{\mathcal}
\newcommand{\pp}{\partial}
\renewcommand{\div}{\mathrm{div}\hspace{1pt}}
\newcommand{\dualpr}[1]{\langle #1\rangle}
\title[HDG projection for elasticity and its applications]{New analytical tools for HDG in elasticity, with applications to elastodynamics}
\author{Shukai Du}
\address{University of Delaware}
\email{shukaidu@udel.edu}
\author{Francisco-Javier Sayas}
\address{University of Delaware}
\email{fjsayas@udel.edu}
\thanks{This work was partially supported by the NSF grant DMS-1818867.}
\subjclass[2010]{65N30, 65M60, 74B05}
\date{\today}
\keywords{discontinuous Galerkin method, hybridization, optimal convergence, elastic waves}
\begin{document}

\begin{abstract}
We present some new analytical tools for the error analysis of hybridizable discontinuous Galerkin (HDG) method for linear elasticity. These tools allow us to analyze more variants of HDG method using the projection-based approach, which renders the error analysis simple and concise.  The key result is a tailored projection for the Lehrenfeld-Sch\"{o}berl type HDG (HDG+ for simplicity) methods. 
By using the projection we recover the error estimates of HDG+ for steady-state and time-harmonic elasticity in a simpler analysis. We also present a semi-discrete (in space) HDG+ method for transient elastic waves and prove it is uniformly-in-time optimal convergent by using the projection-based error analysis. Numerical experiments supporting our analysis are presented at the end.
\end{abstract}

\maketitle

\section{Introduction}
The paper is devoted to present some new techniques for the a priori error analysis of a new class of hybridizable discontinuous Galerkin methods. The methods in this class use a special type of stabilization function that was first introduced by Lehrenfeld and Sch\"{o}berl in \cite{LeSc:2010}. We will call them HDG+ methods for simplicity. Instead of attempting to reach for maximal generality, we will focus on linear elasticity on tetrahedral meshes.

We begin by reviewing some existing works. The first HDG method for linear elasticity was proposed in \cite{SoCoSt:2009}. 
The method enforces the symmetry of the stress strongly and uses order $k$ polynomial spaces for all variables. 
It was then proved in \cite{FuCoSt:2015} that the method is optimal for displacement but only suboptimal for stress (order of $k+\frac{1}{2}$); they also showed the order is sharp on triangular meshes in the numerical experiments. 
To recover optimal convergence (based on which the superconvergence by post-processing is possible), there are mainly three approaches.
The first approach relaxes the strong symmetry condition to weak symmetry \cite{CoSh:2013}.
In general, mixed finite element methods based on weak symmetric stress formulations are relatively easier to implement but use more degrees of freedom and therefore can be more costly to compute. 
The second and the third approaches are all based on strong symmetric stress formulations, where the conservation of angular momentum is automatically preserved. The second approach \cite{CoFu:2018} uses $M$-decomposition \cite{CoFuSa:2017} to enrich the approximation space for stress by adding some basis functions. The approach recovers optimal convergence and also provides an associated tailor projection as an useful tool for error analysis. However, the added basis can be rational functions instead of polynomials and therefore can lead to some difficulties in implementation. 

The focus of this paper is the third approach, which is to use HDG+ method for linear elasticity. The method was originally proposed in \cite{LeSc:2010} for diffusion problems, then applied to steady-state linear elasticity in \cite{QiShSh:2018}. It uses only polynomial basis functions, achieves one order higher convergence rate for the displacement without post-processing, and its computational complexity is the same to the standard HDG method (order $k$ for both stress and displacement) for their global systems.
However, the existing error analysis of HDG+ methods are all based on using orthogonal projections \cite{HuPrSa:2017,Oi:2015,QiShSh:2018,QiSh:2016,QiSh2:2016}, which make the analysis slightly more complicated (it requires a bootstrapping argument to prove convergence of all variables, as opposed to consecutive energy and duality proofs), and detached from the existing projection-based error analysis of HDG methods \cite{ChCo:2012,CoFuHuJiSaMaSa:2018,CoGoSa:2010,CoMu:2015,CoQiSh2012,CoQu:2014,CoSh:2013,GrMo:2011}, where specifically constructed projections are used to make the analysis simple and concise. This motivates us to find a new kind of projection for HDG+ for elasticity. The goal of the projection is twofold: first of all, it takes care of all the off-diagonal terms in the matrix form of the equations (except  for a $2\times 2$ block which is considered as a single diagonal term), and allows us to do a simple energy estimate for some of the variables; second, it facilitates a duality argument where the adjoint equation is fed with the missing error terms to estimate, by using the adjoint projection (consisting of a simple change of sign in the stabilization parameter $\boldsymbol\tau$). This has been done in \cite{DuSa:2019} for diffusion problems. A novelty of this paper is the fact that we complement the projection with an error term that does not affect the error bounds or the simplicity of their proofs. We have attached this error term to the projection to make the arguments simpler. 

In summary, we have devised a projection for the HDG+ methods for linear elasticity. The projection enables us to: (1) recycle existing projection-based error analysis techniques for the analysis of HDG+ methods; (2) make the error analyis simple and concise;  (3) build connections between $M$-decompositions \cite{CoFu:2018, CoFuSa:2017} and HDG+ methods. To be more specific,
we present a semi-discrete HDG+ method for transient elastic waves that has a uniform-in-time optimal convergence. We show that the proof for optimal convergence can be easily obtained by using the new  projection and some existing techniques in traditional HDG methods for evolutionary equations \cite{CoFuHuJiSaMaSa:2018}.
Moreover, we recover the error estimates for steady-state elasticity \cite{QiShSh:2018} and frequency domain elastodynamics \cite{HuPrSa:2017} by using the projection-based analysis, and we show that the analysis can be simplified in both cases.
Since the construction of the HDG+ projection involves first constructing a projection associated to an $M$-decomposition, it also shed some light upon the connections between these two kinds of methods. 

To provide a more intuitive view, we put the main procedures of constructing the projection in the flow chart Figure \ref{fig:flow_chart}.
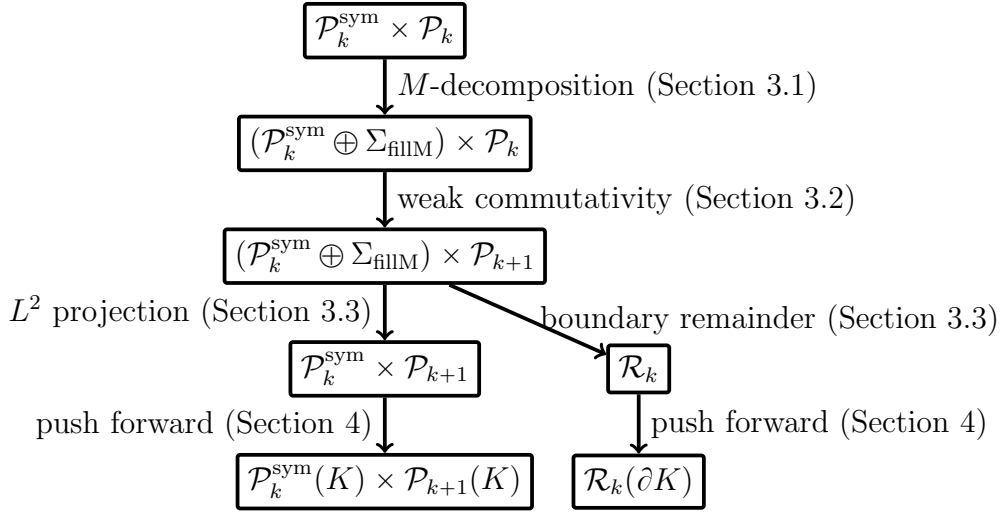
\begin{figure}[ht]
\centering
\begin{tikzpicture}[scale=1.5]
\node[draw,rounded corners=1pt,line width=.5mm]	(init) at (0,3) {$\mc P_k^\mathrm{sym}\times \mc P_k$};
\node[draw,rounded corners=1pt,line width=.5mm]	(Mdecomp) at (0,2) {$(\mc P_k^\mathrm{sym}\oplus\Sigma_\mathrm{fillM})\times \mc P_k$};
\node[draw,rounded corners=1pt,line width=.5mm]	(weakcom) at (0,1) {$(\mc P_k^\mathrm{sym}\oplus\Sigma_\mathrm{fillM})\times \mc P_{k+1}$};
\node[draw,rounded corners=1pt,line width=.5mm]	(HDGplus) at (0,0) {$\mc P_k^\mathrm{sym}\times \mc P_{k+1}$};
\node[draw,rounded corners=1pt,line width=.5mm]	(bdrem) at (2.25,0) {$\mc R_k$};
\node[draw,rounded corners=1pt,line width=.5mm]	(HDGplus_p) at (0,-1) {$\mc P_k^\mathrm{sym}(K)\times \mc P_{k+1}(K)$};
\node[draw,rounded corners=1pt,line width=.5mm]	(bdrem_p) at (2.25,-1) {$\mc R_k(\pp K)$};

\draw [->,line width=.5mm,black] (init) to node [right] {$M$-decomposition (Section \ref{sec:proj_ref}.1)} (Mdecomp);
\draw [->,line width=.5mm,black] (Mdecomp) to node [right] {weak commutativity (Section \ref{sec:proj_ref}.2)} (weakcom);
\draw [->,line width=.5mm,black] (weakcom) to node [left] {$L^2$ projection (Section \ref{sec:proj_ref}.3)} (HDGplus);
\draw [->,line width=.5mm,black] (weakcom) to node [right] {boundary remainder (Section \ref{sec:proj_ref}.3)} (bdrem);
\draw [->,line width=.5mm,black] (HDGplus) to node [left] {push forward (Section \ref{sec:proj_phy})} (HDGplus_p);
\draw [->,line width=.5mm,black] (bdrem) to node [right] {push forward (Section \ref{sec:proj_phy})} (bdrem_p);
\end{tikzpicture}
\caption{Main procedures to contruct the projection. The projection is first constructed on the reference element then pushed forward to the physical element $K$.}
\label{fig:flow_chart}
\end{figure}
The associated boundary remainder term related to the projection behaves like interpolation error and it depends only on the local projection. 
As we will see later in the applications, the boundary remainder together with the stabilization parameter play a key role in the optimal convergence of the HDG+ methods, allowing us more flexibility, since now we only need to find a projection such that its associated boundary remainder is small enough to guarantee optimal convergence, instead of enforcing it to vanish, which is the case of the standard projection. 

The rest of the paper is organized as follows. In Section \ref{sec:proj}, we present the main theorem about the projection and its properties. In Section \ref{sec:proj_ref}, we show the procedures for constructing the projection on the reference element. In Section \ref{sec:proj_phy}, we develop a systematic approach of changes of variables to obtain the projection on the physical element.
In Section \ref{sec:steady_state} and \ref{sec:freq}, we recover the error estimates in steady-state elasticity \cite{QiShSh:2018} and elasto-dynamics \cite{HuPrSa:2017} using projection-based analysis. In Section \ref{sec:trans_elas}, we present a semi-discrete HDG method for transient elastic waves and prove it is optimally convergent, uniformly in the time variable. Finally, we give some numerical experiments to support our analysis.

\section{The projection}\label{sec:proj}

Since the main tool and one of the principal novelties of this article is the new HDG projection for elasticity, we first introduce its main properties in Theorem \ref{th:PROJ}. The reader just interested in the applications can skip the sections devoted to its construction and analysis (Section \ref{sec:proj_ref} and \ref{sec:proj_phy}) and jump directly to how it is used (Section \ref{sec:steady_state}, \ref{sec:freq} and \ref{sec:trans_elas}). To speed up the introduction of the projection we give a quick notation list to be used throughout the article.

\begin{itemize}
\item For a domain $\mathcal O\subset \R^3$, the respective inner products of $L^2(\mathcal O;\R^3)$ and $L^2(\mathcal O;\Rsym)$ will be denoted
\[
(\bs u,\bs v)_{\mathcal O}:=\int_{\mathcal O} \bs u\cdot\bs v,
\qquad
(\bs\sigma,\bs\rho)_{\mathcal O}:=\int_{\mathcal O}
\bs\sigma :\bs\rho,
\]
where in the latter the colon denotes the Frobenius product of matrices and $\Rsym$ is the space of symmetric $3\times 3$ matrices. The norm of both spaces will be denoted $\|\cdot\|_{\mathcal O}$.
\item For a Lipschitz domain $\mathcal O$, the inner product in $L^2(\partial \mathcal O;\R^3)$ will be denoted
\[
\langle \bs\mu,\bs\eta\rangle_{\partial\mathcal O}:=\int_{\partial\mathcal O}
\bs\mu\cdot\bs\eta
\]
and the associated norm will be denoted $\|\cdot\|_{\partial\mathcal O}$.
\item The Sobolev seminorm in $H^m(\mathcal O;\R^3)$ and $H^m(\mathcal O;\Rsym)$ will be denoted $|\cdot|_{m,\mathcal O}$.
\item The symmetric gradient operator (linearized strain) is given by
\[
\bs\varepsilon(\bs u):=\tfrac12 (\mathrm D\bs u+(\mathrm D\bs u)^\top),
\]
and the divergence operator $\mathrm{div}$ will be applied to symmetric-matrix-valued functions by acting on their rows, outputting a column vector-valued function.
\item When $\mathcal O$ is a Lipschitz domain, we will consider the space
\[
H(\mathcal O,\mathrm{div};\Rsym):=
	\{ \bs\sigma\in L^2(\mathcal O;\Rsym)\,:\,
	\mathrm{div}\,\bs\sigma\in L^2(\mathcal O;\R^3)\},
\]
and the normal traction operator
$
\gamma_n:H(\mathcal O,\mathrm{div};\Rsym)\to H^{-1/2}(\partial\mathcal O;\R^3),
$ 
defined by Betti's formula
\[
\langle \gamma_n \bs\sigma,\gamma\bs v\rangle_{\partial\mathcal O}
:=(\bs\sigma,\bs\varepsilon(\bs v))_{\mathcal O}
+(\mathrm{div}\,\bs\sigma,\bs v)_{\mathcal O}
\qquad \forall \bs v\in H^1(\mathcal O;\R^3).
\]
Here $\gamma$ is the trace operator, $H^{-1/2}(\partial\mathcal O;\R^3)$ is the dual space of $H^{1/2}(\partial\mathcal O;\R^3)$, and the angled bracket denotes their duality product that extends the $L^2(\partial\mathcal O;\R^3)$ inner product. 
\end{itemize}
For discretization we will consider a sequence of tetrahedral meshes $\mathcal T_h$ and the following notation: 
\begin{itemize}
\item $K$ is a tetrahedron, of diameter $h_K$ and inradius at most $c\,h_K$ for a fixed shape-regularity constant $c>0$.
\item $\mathcal F(K)$ is the set of faces of $K$.
\item $\mathcal P_k(K;X)$ is the space of $X$-valued polynomial functions of degree up to $k\ge 0$, where $X\in \{\R^3,\Rsym\}$.
\item $\mathcal R_k(\partial K;X):=\prod_{F\in \mathcal F(K)} \mathcal P_k(F;X)$ is the space of piecewise polynomial functions on the boundary of $K$, with $X$ as above.
\item $\mr P_k:L^2(K;X)\rightarrow \mc P_k(K;X)$ is the orthogonal projection onto the image space, with $X$ as above.
\item $\mathrm P_M:L^2(\partial K;\R^3)\to \mathcal R_k(\partial K;\R^3)$ is the orthogonal projection onto the image space. This operator will often be applied on the trace of a function in $H^1(K;\R^3)$ on $\partial K$.
\item $\bs\tau\in \mathcal R_0(\partial K;\Rsym)$ is a so-called stabilization function satisfying
\begin{equation}\label{eq:tauhm1}
C_1 h_K^{-1} \|\bs\mu\|_{\partial K}^2
\le \langle\bs\tau\bs\mu,\bs\mu\rangle_{\partial K}
\le C_2 h_K^{-1} \|\bs\mu\|_{\partial K}^2
\qquad\forall \bs\mu\in L^2(\partial K;\R^3),
\end{equation}
for fixed positive constants $C_1$ and $C_2$, independent of $h$ (i.e., of the particular mesh).
\item The wiggled inequality sign $a\lesssim b$ hides a constant $a\le C\, b$ that is independent of $h$, while $a \approx b$ means $a\lesssim b\lesssim a$.   
\end{itemize}

\begin{theorem}\label{th:PROJ}
For $k\ge 1$, there exists
a family of projections and associated boundary remainder operators
\begin{alignat*}{6}
\Pi:H^1(K;\Rsym)\times H^1(K;\R^3)
& \to \mathcal P_k(K;\Rsym) \times \mathcal P_{k+1}(K;\R^3),\\
\mathrm R: H^1(K;\Rsym)\times H^1(K;\R^3) & \to \mathcal R_k(\partial K;\R^3),
\end{alignat*}
depending on $\bs\tau\in \mathcal R_0(\partial K;\Rsym)$,  where $\bs\tau$ is a constant positive definite matrix on each face of $K$,  and if $(\bs\sigma_K,\bs u_K)=\Pi(\bs\sigma,\bs u;\bs\tau)$ and $\bs\delta_K=\mathrm R(\bs\sigma,\bs u;\bs\tau)$, the following conditions hold:
\begin{subequations}\label{eq:tauProp}
\begin{alignat}{6}\label{eq:tauProp_a}
 (\bs u_K-\bs u,\bs v)_K &=0 &\quad &
	\forall \bs v\in \mathcal P_{k-1}(K;\R^3),\\
\label{eq:tauProp_b}
 -(\mathrm{div}\,(\bs\sigma_K-\bs\sigma),\bs w)_K
+\langle \bs\tau \mathrm P_M(\bs u_K-\bs u),\bs w\rangle_{\partial K} &=
\langle \bs\delta_K,\bs w\rangle_{\partial K}
	&& \forall \bs w\in \mathcal P_{k+1}(K;\R^3),\\
\label{eq:tauProp_c}
-\langle (\bs\sigma_K-\bs\sigma)\bs n
	-\bs\tau(\bs u_K-\bs u),\bs\mu\rangle_{\partial K} &=
	\langle \bs\delta_K,\bs\mu\rangle_{\partial K}
	&& \forall \bs\mu\in \mathcal R_k(\partial K;\R^3).
\end{alignat}
\end{subequations}
Moreover, if $\bs\tau$ satisfies \eqref{eq:tauhm1} and $(\bs\sigma,\bs u)\in H^m(K;\Rsym)\times H^{m+1}(K;\R^3)$ with $1\le m\le k+1$, then we have the estimates:
\begin{equation}
\label{eq:1.3a}
\| \bs\sigma_K-\bs\sigma\|_K+ h_K^{-1}\|\bs u_K-\bs u\|_K
+h_K^{1/2}\|\bs\delta_K\|_{\partial K}
	\le C h_K^m (|\bs\sigma|_{m,K}+|\bs u|_{m+1,K}).
\end{equation} 
The constant $C$ depends only on the polynomial degree $k$, the constants $C_1$ and $C_2$ in \eqref{eq:tauhm1} and the shape-regularity constant $c$.
Finally, the `adjoint' projection can be defined as 
\begin{equation}\label{eq:1.4}
\Pi(\bs\sigma,\bs u;-\bs\tau):=(\bs\sigma_K,\bs u_K),
\qquad\mbox{where}\qquad
(\bs\sigma_K,-\bs u_K):=\Pi(\bs\sigma,-\bs u;\bs\tau).
\end{equation}
This projection satisfies the properties \eqref{eq:tauProp} with the same $\bs\delta_K$, if we substitute $\bs\tau$ by $-\bs\tau$.
\end{theorem}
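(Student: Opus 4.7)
The plan is to construct the projection on a reference tetrahedron $\widehat K$ first and then transfer it to an arbitrary $K$ by a scaling argument, following the four stages sketched in Figure \ref{fig:flow_chart}. On $\widehat K$, I would start from the $M$-decomposition framework: there is a polynomial ``fill'' space $\Sigma_{\mathrm{fillM}}$ of symmetric-matrix-valued functions such that the enlarged pair $(\mathcal P_k^{\mathrm{sym}}\oplus\Sigma_{\mathrm{fillM}})\times\mathcal P_k$ admits an $M$-decomposition of $\mathcal R_k(\partial\widehat K;\R^3)$. The general theory then yields a projection onto this enriched pair that satisfies the HDG defining equations \emph{with no boundary remainder}, but only against test functions of degree $k-1$ for the displacement and degree $k$ for the divergence equation. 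The next step is a weak commutativity upgrade: because the divergence of the enlarged stress space surjects onto enough of $\mathcal P_{k+1}$ modulo the trace coupling, I can perturb the projection so that \eqref{eq:tauProp_b} holds against all $\bs w\in\mathcal P_{k+1}$ while \eqref{eq:tauProp_c} is preserved against $\mathcal R_k(\partial\widehat K;\R^3)$. Unique solvability at each stage reduces to a square linear system on the reference element, which I would verify by an energy argument (the homogeneous version forces the stress and displacement increments to vanish).

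The third stage strips the fill part of the stress. Writing the enlarged projection as $(\bs\sigma_K+\bs\theta_K,\bs u_K)$ with $\bs\theta_K\in\Sigma_{\mathrm{fillM}}$, I apply the $L^2$ projection onto $\mathcal P_k^{\mathrm{sym}}$ to discard $\bs\theta_K$ in the interior, and define $\bs\delta_K\in\mathcal R_k(\partial\widehat K;\R^3)$ as the boundary trace effect of $\bs\theta_K$ (morally $\bs\theta_K\bs n|_{\partial\widehat K}$, adjusted so that it lies piecewise in $\mathcal P_k$). The orthogonality properties of $\bs\theta_K$ ensure that the interior equation \eqref{eq:tauProp_a} and the divergence equation \eqref{eq:tauProp_b} acquire exactly the same boundary term $\langle\bs\delta_K,\cdot\rangle_{\partial\widehat K}$ that enters \eqref{eq:tauProp_c}; this is precisely the trick that replaces an exact commuting property by a small, explicit remainder.

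To pass from $\widehat K$ to $K$ I would use the Piola-type changes of variables developed in Section \ref{sec:proj_phy}, which are compatible with the symmetric stress and vector displacement spaces and transform $\bs\tau$ correctly; combined with \eqref{eq:tauhm1} this yields the scale-invariant estimate \eqref{eq:1.3a} via a standard Bramble--Hilbert argument on $\widehat K$ applied to each component of the error and to $\bs\delta_K$, with the factor $h_K^{1/2}$ on the boundary term coming from the usual trace scaling. Finally, the adjoint identity \eqref{eq:1.4} is immediate from the structure of \eqref{eq:tauProp}: flipping the sign of $\bs u$ everywhere in the defining equations is equivalent to flipping the sign of $\bs\tau$ while leaving $\bs\delta_K$ unchanged, since $\bs\delta_K$ depends on $(\bs\sigma,\bs u)$ only through the interior data, and uniqueness of the projection then forces the claimed formula.

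I expect the main obstacle to be the interplay in the second stage: verifying that the weak commutativity upgrade to $\mathcal P_{k+1}$ can be carried out while keeping the boundary equation intact, and simultaneously ensuring that the resulting $\bs\delta_K$ lies face-by-face in $\mathcal P_k$ rather than merely in $L^2(\partial\widehat K;\R^3)$. This is essentially a dimension-counting and compatibility check on $\Sigma_{\mathrm{fillM}}$, and it is the step where the specific structure of the $M$-decomposition for symmetric-stress elasticity is actually used.
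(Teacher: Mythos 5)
Your proposal follows essentially the same route as the paper: the Cockburn--Fu fill space, an extended projection onto $\widehat\Sigma_+\times\widehat{\bs V}_+$ with the weak commutativity condition tested on $\mathcal P_{k+1}$, an $L^2$ projection of the stress onto $\mathcal P_k^{\mathrm{sym}}$ whose normal-trace discrepancy becomes the boundary remainder (which lands in $\mathcal R_k(\partial K;\R^3)$ automatically because $\widehat\gamma_n\widehat\Sigma_{\mathrm{fill}}=\widehat\Theta\subset\widehat{\bs M}$, resolving the obstacle you flag), a Piola-type push-forward, Bramble--Hilbert, and a sign flip for the adjoint. The only substantive detail left implicit is that the Bramble--Hilbert constant must be uniform over the pulled-back stabilization functions $\widecheck{\bs\tau}$ and compatible with the $h_K^{-1}$ scaling in \eqref{eq:tauhm1}; the paper secures this with a compactness argument in the $24$ parameters of $\bs\tau$ (Proposition \ref{prop:3.2}) combined with the rescaling $\bs u\mapsto\eta\,\bs u$, $\eta=h_K^{-1}$, in Proposition \ref{prop:3.4}.
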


Note that conditions \eqref{eq:tauProp} are not enough to define the projection $\Pi$ but are exactly the ones that will be needed for the applications. 
As a final note, notice that combining \eqref{eq:tauProp_b} and \eqref{eq:tauProp_c}, we have
\begin{align*}
(\bs\sigma_K-\bs\sigma,\bs\varepsilon(\bs w))_K=0\qquad
\forall \bs w\in\mc P_k(K;\mbb R^3).
\end{align*}
Since $k\ge1$ and $\mc P_0(K;\mbb R_\mr{sym}^{3\times3})\subset \bs\varepsilon(\mc P_1(K;\mbb R^3))$, we have
\begin{equation}\label{eq:rmproj}
(\bs\sigma_K-\bs\sigma,\bs\theta)_K=0\qquad
\forall \bs\theta\in\mc P_0(K;\mbb R_\mr{sym}^{3\times3}).
\end{equation}

\section{The projection in the reference element}
\label{sec:proj_ref}

\subsection{Preparatory work on the reference element}\label{sec:Prep}

In this section we will work on the
reference tetrahedron $\widehat K:=\{ (x_1,x_2,x_3)\in \R^3\,:\, x_1,x_2,x_3,1-x_1-x_2-x_3>0\}$. 
The trace for vector-valued functions on the reference element will be denoted $\widehat\gamma$, and the normal traction operator on the reference element will be denoted $\widehat\gamma_n$. We will use shortened notation for the following spaces:
\begin{subequations}\label{eq:2.1}
\begin{alignat}{6}
& \widehat{\bs V}:=\mathcal P_{k}(\widehat K;\R^3), 
&\qquad &
\widehat{\bs V}_-:=\mathcal P_{k-1}(\widehat K;\R^3),
\qquad
\widehat{\bs V}_+:=\mathcal P_{k+1}(\widehat K;\R^3),\\
& \widehat\Sigma :=\mathcal P_k(\widehat K;\Rsym),
&\qquad &
\widehat\Sigma_-:=\{ \bs\sigma\in\widehat\Sigma\,:\,
\mathrm{div}\,\bs\sigma=0, \quad \widehat\gamma_n\bs\sigma=0\}
\oplus \bs\varepsilon(\widehat{\bs V}).
\end{alignat}
\end{subequations}
A new space $\widehat\Sigma_+$ will be defined once we have introduced some tools for it. Note that these constructions can be done directly on any tetrahedron $K$ \cite{CoFu:2018}.

The first of these constructions is a lifting of the traction operator. It will act on the space
\[
L^2_{\mathcal M}(\partial\widehat K;\R^3)
:=\{\bs\mu\in L^2(\partial\widehat K;\R^3)\,:\, 
\langle\bs\mu,\widehat\gamma\bs m\rangle_{\partial\widehat K}=0
\quad\forall\bs m\in\mathcal M\},
\]
where $\mathcal M$ is the six-dimensional space of infinitesimal rigid motions
\begin{alignat*}{6}
\mathcal M:=& \{\bs m(\mathbf x):=\mathbf b+A\mathbf x\,:\,
A^\top =-A,\quad \mathbf b\in \R^3\}\\
=&\{\bs m\in H^1(\widehat K;\R^3)\,:\, \bs\varepsilon(\bs m)=0\}.
\end{alignat*}
Note that the trace space of $\mathcal M$ is also six-dimensional, i.e., if $\bs m\in \mathcal M$ vanishes on the boundary of $\widehat K$, then $\bs m=\bs 0$. We thus define the operator $\widehat\gamma_n^+:L^2_{\mathcal M}(\partial\widehat K;\R^3)\to H(\widehat K,\mathrm{div};\Rsym)$ by 
\begin{subequations}\label{eq:gmp}
\begin{alignat}{6}
\widehat\gamma_n^+\bs\mu:=\bs\varepsilon(\bs u),
\quad\mbox{where}\quad
& \bs u\in H^1(\widehat K;\R^3)/\mathcal M,\\
& (\bs\varepsilon(\bs u),\bs\varepsilon(\bs v))_{\widehat K}
=\langle\bs\mu,\widehat\gamma\bs v\rangle_{\partial\widehat K}
\quad\forall \bs v\in H^1(\widehat K;\R^3),
\end{alignat}
\end{subequations}
or equivalently
\begin{subequations}\label{eq:gmp2}
\begin{alignat}{6}
\widehat\gamma_n^+\bs\mu:=\bs\sigma,
\quad\mbox{where}\quad
&\bs\sigma\in \bs\varepsilon(H^1(\widehat K;\R^3)),\\
& (\bs\sigma,\bs\varepsilon(\bs v))_{\widehat K}
=\langle\bs\mu,\widehat\gamma\bs v\rangle_{\partial\widehat K}
\quad\forall \bs v\in H^1(\widehat K;\R^3).
\end{alignat}
\end{subequations}
The definition \eqref{eq:gmp} is correct since it involves the solution of a coercive variational problem on a quotient space, due to Korn's Second Inequality. From \eqref{eq:gmp2} it is clear that
\begin{equation}\label{eq:6}
\mathrm{div}\,\widehat\gamma_n^+\bs\mu=0,
\qquad
\widehat\gamma_n\widehat\gamma_n^+\bs\mu=\bs\mu
\qquad \forall \bs\mu\in L^2_{\mathcal M}(\partial K;\R^3). 
\end{equation}
We next consider the spaces
\begin{alignat*}{6}
&\widehat{\bs M}:=\mathcal R_k(\partial\widehat K;\R^3),
\qquad
\widehat\Sigma_S := \{\bs\sigma\in\widehat\Sigma\,:\, \mathrm{div}\,\bs\sigma=0\},\\
& \widehat\Theta:=\{\bs\mu\in \widehat{\bs M}\,:\,
\langle\widehat\gamma_n\bs\sigma
+\widehat\gamma\bs m,\bs\mu\rangle_{\partial\widehat K}=0
\quad\forall (\bs\sigma,\bs m)\in \widehat\Sigma_S\times \mathcal M \},
\end{alignat*}
and
\[
\widehat\Sigma_{\mathrm{fill}}:=\widehat\gamma_n^+\widehat\Theta.
\]

\begin{theorem}\label{th:2.1}
The following properties hold:
\begin{itemize}
\item[\rm (a)] $\mathrm{div}\,\bs\sigma=0$ for all $\bs\sigma\in \widehat\Sigma_{\mathrm{fill}}$,
\item[\rm (b)] $\widehat\gamma_n^+$ is an isomorphism between $\widehat\Theta$ and $\widehat\Sigma_{\mathrm{fill}}$, and its inverse is $\widehat\gamma_n$,
\item[\rm (c)] $\widehat\Sigma_{\mathrm{fill}}\cap \widehat\Sigma=\{0\}$,
\item[\rm (d)] $\widehat{\bs M}=\widehat\Theta\oplus\widehat\gamma_n\widehat\Sigma_S\oplus \gamma\mathcal M$ with orthogonal sum.
\end{itemize}
\end{theorem}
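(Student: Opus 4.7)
The plan is to deduce all four items from the two identities \eqref{eq:6} together with Betti's formula, with one small preliminary observation: testing the defining orthogonality of $\widehat\Theta$ against the pair $(\bs\sigma,\bs m)=(\bs 0,\bs m)$ yields $\langle\widehat\gamma\bs m,\bs\mu\rangle_{\partial\widehat K}=0$ for every $\bs m\in\mathcal M$, so $\widehat\Theta\subset L^2_{\mathcal M}(\partial\widehat K;\R^3)$ and consequently $\widehat\gamma_n^+$ is legitimately applicable to every element of $\widehat\Theta$.

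With this in place, (a) is an immediate consequence of the first identity in \eqref{eq:6}: any $\bs\sigma\in\widehat\Sigma_{\mathrm{fill}}$ is by definition $\widehat\gamma_n^+\bs\mu$ for some $\bs\mu\in\widehat\Theta$, and thus $\mathrm{div}\,\bs\sigma=0$. For (b), the second identity in \eqref{eq:6} says that $\widehat\gamma_n\circ\widehat\gamma_n^+$ is the identity on $L^2_{\mathcal M}$, so $\widehat\gamma_n^+:\widehat\Theta\to\widehat\Sigma_{\mathrm{fill}}$ is injective; surjectivity onto $\widehat\Sigma_{\mathrm{fill}}$ is built into the definition of the latter, and the inverse is clearly $\widehat\gamma_n$.

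Item (c) chains the two preceding points: given $\bs\sigma\in\widehat\Sigma_{\mathrm{fill}}\cap\widehat\Sigma$, item (a) and polynomial membership give $\bs\sigma\in\widehat\Sigma_S$. Writing $\bs\sigma=\widehat\gamma_n^+\bs\mu$ with $\bs\mu\in\widehat\Theta$, item (b) identifies $\bs\mu=\widehat\gamma_n\bs\sigma\in\widehat\gamma_n\widehat\Sigma_S$. The orthogonality defining $\widehat\Theta$, tested against the very $\bs\sigma$ we started with, then gives $\langle\widehat\gamma_n\bs\sigma,\bs\mu\rangle_{\partial\widehat K}=\|\bs\mu\|_{\partial\widehat K}^2=0$, so $\bs\mu=\bs 0$ and hence $\bs\sigma=\widehat\gamma_n^+\bs 0=\bs 0$.

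For (d) the cleanest reading is to interpret $\widehat\Theta$ as the orthogonal complement of $\widehat\gamma_n\widehat\Sigma_S+\gamma\mathcal M$ inside the finite-dimensional space $\widehat{\bs M}$; then $\widehat{\bs M}$ splits orthogonally as $\widehat\Theta$ plus that sum, and it suffices to check that $\widehat\gamma_n\widehat\Sigma_S$ and $\gamma\mathcal M$ are themselves mutually orthogonal. This is exactly Betti's formula applied to $(\bs\sigma,\bs m)\in\widehat\Sigma_S\times\mathcal M$: the two volume contributions $(\bs\sigma,\bs\varepsilon(\bs m))_{\widehat K}$ and $(\mathrm{div}\,\bs\sigma,\bs m)_{\widehat K}$ vanish because $\bs\varepsilon(\bs m)=\bs 0$ and $\mathrm{div}\,\bs\sigma=\bs 0$, so $\langle\widehat\gamma_n\bs\sigma,\widehat\gamma\bs m\rangle_{\partial\widehat K}=0$; pairwise orthogonality then automatically promotes the sum to a direct sum. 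Nothing here looks genuinely hard; the only thing to watch is to keep the three spaces $\widehat\Sigma$, $\widehat\Sigma_S$ and $\widehat\Sigma_{\mathrm{fill}}$ carefully distinguished and to recall that Betti's formula is the one ingredient producing all the cross-orthogonalities.
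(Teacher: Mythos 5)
Your proof is correct and follows essentially the same route as the paper's: (a) and (b) from \eqref{eq:6}, (c) from the orthogonality $\widehat\Theta\perp\widehat\gamma_n\widehat\Sigma_S$ combined with (a) and (b), and (d) from Betti's formula plus reading $\widehat\Theta$ as the orthogonal complement of $\widehat\gamma_n\widehat\Sigma_S+\widehat\gamma\mathcal M$ in $\widehat{\bs M}$. Your preliminary check that $\widehat\Theta\subset L^2_{\mathcal M}(\partial\widehat K;\R^3)$, so that $\widehat\gamma_n^+$ is actually applicable to $\widehat\Theta$, is a worthwhile detail the paper leaves implicit.
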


\begin{proof}
Properties (a) and (b) are easy consequences of \eqref{eq:6}. By (a), $\widehat\Sigma_{\mathrm{fill}}\cap \widehat\Sigma=\widehat\Sigma_{\mathrm{fill}}\cap \widehat\Sigma_S$ and, therefore, if $\bs\sigma\in \widehat\Sigma_{\mathrm{fill}}\cap \widehat\Sigma_S$, then $\widehat\gamma_n\bs\sigma\in \widehat\Theta\cap \widehat\gamma_n\widehat\Sigma_S=\{0\}$ (the latter two sets are orthogonal to each other by definition of $\widehat\Theta$), but then (b) proves that $\bs\sigma=0$, which proves (c). Finally, if $\bs\sigma\in \widehat\Sigma_S$ and $\bs m\in \mathcal M$, then
\[
\langle\widehat\gamma_n\bs\sigma,\gamma\bs m\rangle_{\partial\widehat K}
=(\mathrm{div}\,\bs\sigma,\bs m)_{\widehat K}
+(\bs\sigma,\bs\varepsilon(\bs m))_{\widehat K}=0,
\]
which shows that the sum $\widehat\gamma_n\widehat\Sigma_S+\widehat\gamma\mathcal M$ is orthogonal. Since $\widehat\Theta$ is the orthogonal complement of the latter set, (d) is proved.
\end{proof}

The Cockburn-Fu discrete pair for elasticity 	\cite{CoFu:2018} is given by the spaces
\[
\widehat\Sigma_+:=\widehat\Sigma\oplus\widehat\Sigma_{\mathrm{fill}}
\]
(the sum is direct because of Theorem \ref{th:2.1}) and $\widehat{\bs V}$. In their context of $M$-decompositions for elasticity, the following result is a key one, that we will need to work with our extended pair $\widehat\Sigma_+\times \widehat{\bs V}_+$. 

\begin{theorem}\label{th:2.2}
The following properties hold:
\begin{itemize}
\item[\rm (a)] $\bs\varepsilon(\widehat{\bs V})\subset \widehat\Sigma_-$,
\item[\rm (b)] $\mathrm{div}\,\widehat\Sigma_+\subset \widehat{\bs V}_-$,
\item[\rm (c)] $\widehat\gamma_n\widehat\Sigma_+ 
+ \widehat\gamma\widehat{\bs V}\subset \widehat{\bs M}$,
\item[\rm (d)] $\mathrm{dim}\,\widehat{\bs M}
=\mathrm{dim}\,\widehat\Sigma_-^\perp+\mathrm{dim}\,\widehat{\bs V}_-^\perp,$ where the orthogonal complement $\widehat\Sigma_-^\perp$ is taken in $\widehat\Sigma_+$, while $\widehat{\bs V}_-^\perp$ is taken in $\widehat{\bs V}$,
\item[\rm (e)] $\widehat{\bs M}=\widehat\gamma_n\widehat\Sigma_-^\perp
\oplus \gamma \widehat{\bs V}_-^\perp$, with orthogonal sum,
\item[\rm (f)] The map
$
\widehat\Sigma_-^\perp\times\widehat{\bs V}_-^\perp\ni (\bs\sigma,\bs u)
\longmapsto 
\widehat\gamma_n\bs\sigma+\widehat\gamma\bs u\in \widehat{\bs M}
$
is an isomorphism.
\end{itemize}
\end{theorem}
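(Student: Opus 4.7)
First, (a)--(c) are immediate from the definitions. (a) is literally the last summand in the definition of $\widehat\Sigma_-$. (b) combines $\mathrm{div}\,\widehat\Sigma\subset\mathcal P_{k-1}(\widehat K;\R^3)=\widehat{\bs V}_-$ with Theorem \ref{th:2.1}(a). For (c), $\widehat\gamma_n\bs\sigma$ and $\widehat\gamma\bs v$ evaluate to face polynomials of degree at most $k$ when $\bs\sigma\in\widehat\Sigma$ and $\bs v\in\widehat{\bs V}$, while $\widehat\gamma_n\widehat\Sigma_{\mathrm{fill}}=\widehat\Theta\subset\widehat{\bs M}$ by Theorem \ref{th:2.1}(b).

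The substance lies in (d)--(f). The plan is to prove (f) first, then read off (e) and use a dimension count for (d). I would open with a key orthogonality that drives the rest: for $\bs\sigma\in\widehat\Sigma_-^\perp$ and $\bs u\in\widehat{\bs V}_-^\perp$, Betti's formula gives
\[
\langle\widehat\gamma_n\bs\sigma,\widehat\gamma\bs u\rangle_{\partial\widehat K}=(\bs\sigma,\bs\varepsilon(\bs u))_{\widehat K}+(\mathrm{div}\,\bs\sigma,\bs u)_{\widehat K}=0,
\]
where (a) handles the first inner product and (b) handles the second. Expanding $\|\widehat\gamma_n\bs\sigma+\widehat\gamma\bs u\|_{\partial\widehat K}^2$ then forces injectivity of the map in (f): if the sum vanishes, then $\widehat\gamma_n\bs\sigma=0$ and $\widehat\gamma\bs u=0$ separately.

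It remains to show that these null traces force $\bs u=\bs 0$ and $\bs\sigma=\bs 0$. For $\bs u$: Betti's formula applied to $\bs\varepsilon(\bs u)$ tested against $\bs u$ gives $\|\bs\varepsilon(\bs u)\|_{\widehat K}^2=-(\mathrm{div}\,\bs\varepsilon(\bs u),\bs u)_{\widehat K}=0$ (since $\mathrm{div}\,\bs\varepsilon(\bs u)\in\widehat{\bs V}_-$ and $\bs u\in\widehat{\bs V}_-^\perp$), so $\bs u\in\mathcal M$, and $\widehat\gamma\bs u=\bs 0$ forces $\bs u=\bs 0$. For $\bs\sigma$: split $\bs\sigma=\bs\sigma_1+\bs\sigma_2\in\widehat\Sigma\oplus\widehat\Sigma_{\mathrm{fill}}$. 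Testing $(\bs\sigma,\bs\varepsilon(\bs v))_{\widehat K}=0$ against $\bs v\in\widehat{\bs V}$ (zero since $\bs\varepsilon(\bs v)\in\widehat\Sigma_-$) and integrating by parts shows $(\mathrm{div}\,\bs\sigma_1,\bs v)_{\widehat K}=0$ for all such $\bs v$, hence $\mathrm{div}\,\bs\sigma_1=\bs 0$ and $\bs\sigma_1\in\widehat\Sigma_S$. Consequently $\widehat\gamma_n\bs\sigma_1=-\widehat\gamma_n\bs\sigma_2$ lies in $\widehat\gamma_n\widehat\Sigma_S\cap\widehat\Theta=\{\bs 0\}$ by Theorem \ref{th:2.1}(d); Theorem \ref{th:2.1}(b) then gives $\bs\sigma_2=\bs 0$, while $\bs\sigma_1\in\widehat\Sigma_S^0\subset\widehat\Sigma_-$ paired with $\bs\sigma_1\in\widehat\Sigma_-^\perp$ gives $\bs\sigma_1=\bs 0$.

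Finally, (d) is a dimension count: combining Theorem \ref{th:2.1}(d) with the rank--nullity identity applied to $\widehat\gamma_n\colon\widehat\Sigma_S\to\widehat{\bs M}$, the elementary surjectivity of $\mathrm{div}\colon\widehat\Sigma\to\widehat{\bs V}_-$ (built via diagonal antiderivatives), and $\dim\bs\varepsilon(\widehat{\bs V})=\dim\widehat{\bs V}-6$ for $k\ge 1$, yields $\dim\widehat\Sigma_-^\perp+\dim\widehat{\bs V}_-^\perp=\dim\widehat{\bs M}$. With (d) in hand, surjectivity in (f) is automatic from injectivity plus equal dimensions, and (e) is the amalgamation of (f) with the orthogonality already established. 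The step I expect to demand the most care is the $\bs\sigma$-part of injectivity, where the three-piece orthogonal decomposition of Theorem \ref{th:2.1}(d) must be exploited precisely to separate the contributions coming from $\bs\sigma_1$ and $\bs\sigma_2$; everything else is bookkeeping.
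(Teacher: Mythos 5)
Your proof is correct and follows essentially the same route as the paper: the Betti-based orthogonality derived from (a) and (b), the splitting $\bs\sigma=\bs\sigma_1+\bs\sigma_2\in\widehat\Sigma\oplus\widehat\Sigma_{\mathrm{fill}}$ combined with the orthogonal decomposition of Theorem \ref{th:2.1}(d) to obtain injectivity of $\widehat\gamma_n$ on $\widehat\Sigma_-^\perp$, and the same dimension count for (d). The only (harmless) difference is that you supply a direct rigid-motion argument for the injectivity of $\widehat\gamma$ on $\widehat{\bs V}_-^\perp$, which the paper simply cites as known.
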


\begin{proof}
Property (a) follows by definition and (b) is a simple consequence of Theorem \ref{th:2.1}(a). To show (c), note simply that $\widehat\gamma_n\widehat\Sigma_{\mathrm{fill}}=\widehat\Theta\subset \widehat{\bs M}$, while all other elements are polynomials of degree less than or equal to $k$. 

To prove (d) to (f), it will be convenient to identify the set
\begin{equation}\label{eq:2.5}
\widehat\Sigma_S^\circ:=
\{ \bs\sigma\in \widehat\Sigma\,:\,
\mathrm{div}\,\bs\sigma=0,\quad\widehat\gamma_n\bs\sigma=0\}
=\ker \widehat\gamma_n|_{\widehat\Sigma_S},
\end{equation}
which appeared in the definition of $\widehat\Sigma_-$ \eqref{eq:2.1}.

By Theorem \ref{th:2.1}(d) and \eqref{eq:2.5}, we have
\begin{alignat}{6}
\mathrm{dim}\,\widehat{\bs M}
	&=\mathrm{dim}\,\widehat\Theta
		+\mathrm{dim}\,\widehat\gamma_n\widehat\Sigma_S
		+\mathrm{dim}\,\gamma\mathcal M
	=\mathrm{dim}\,\widehat\Sigma_{\mathrm{fill}}
		+\mathrm{dim}\,\widehat\Sigma_S-\mathrm{dim}\,\widehat\Sigma_S^\circ
		+\mathrm{dim}\,\mathcal M.
\label{eq:2.6}
\end{alignat}
Using the definitions of $\widehat\Sigma_\pm$ we have
\begin{alignat}{6}
\nonumber
\mathrm{dim}\,\widehat\Sigma_-^\perp
	&=\mathrm{dim}\,\widehat\Sigma_+-\mathrm{dim}\,\widehat\Sigma_-
	 =\mathrm{dim}\,\widehat\Sigma+\mathrm{dim}\,\widehat\Sigma_{\mathrm{fill}}
		-(\mathrm{dim}\,\widehat\Sigma_S^\circ
			+\mathrm{dim}\,\bs\varepsilon(\widehat{\bs V}))\\
	&=\mathrm{dim}\,\widehat{\bs V}_-+\mathrm{dim}\,\widehat\Sigma_S
		+\mathrm{dim}\,\widehat\Sigma_{\mathrm{fill}}
		-(\mathrm{dim}\,\widehat\Sigma_S^\circ+\mathrm{dim}\,\widehat{\bs V}
			-\mathrm{dim}\,\mathcal M),
\label{eq:2.7}
\end{alignat}
where in the last equality we have applied that $\mathrm{div}:\widehat\Sigma\to\widehat{\bs V}_-$ is onto (this is easy to prove), its kernel is $\widehat\Sigma_S$, and the kernel of $\bs\varepsilon:\widehat{\bs V}\to \bs\varepsilon(\widehat{\bs V})$ is $\mathcal M$. The equalities \eqref{eq:2.6} and \eqref{eq:2.7} prove (d).

Due to parts (a) and (b) of this theorem, we have
\[
\langle\widehat\gamma_n\bs\sigma,\widehat\gamma\bs u\rangle_{\partial\widehat K}
=0 \qquad 
\forall \bs\sigma\in \widehat\Sigma_-^\perp, 
\quad\bs u \in \widehat{\bs V}_-^\perp,
\]
which proves that the sum of $\widehat\gamma_n \widehat\Sigma_-^\perp$ and $\widehat\gamma \widehat{\bs V}_-^\perp$ is orthogonal. Since $\widehat\gamma:\widehat{\bs V}_-^\perp\to \widehat{\bs M}$ is injective (this result is known; see, for instance, \cite{sayas2013raviart} and \cite{DuSa:2019}), the property (e) will follow from having proved that $\widehat\gamma_n:\widehat\Sigma_-^\perp\to \widehat{\bs M}$ is injective. 

We first prove the following technical result: if $\bs\sigma\in \widehat\Sigma_+$ satisfies $\mathrm{div}\,\bs\sigma=0$ and $\widehat\gamma_n\bs\sigma=0$, then $\bs\sigma\in \widehat\Sigma$, i.e., the component in $\widehat\Sigma_{\mathrm{fill}}$ vanishes. To do that, take $\bs\sigma=\bs\sigma_1+\bs\sigma_2\in \widehat\Sigma+\widehat\Sigma_{\mathrm{fill}}$ and note that Theorem \ref{th:2.1}(a) shows that if $\mathrm{div}\,\bs\sigma=0$, then $\mathrm{div}\,\bs\sigma_1=0$. Since
\[
0=\widehat\gamma_n\bs\sigma=
\widehat\gamma_n\bs\sigma_1
+\widehat\gamma_n\bs\sigma_2\in \widehat\gamma_n\widehat\Sigma_S
\oplus\widehat\Theta,
\]
by Theorem \ref{th:2.1}(d), it follows that $\widehat\gamma_n\bs\sigma_1=
\widehat\gamma_n\bs\sigma_2=0$, which proves that $\bs\sigma_2=0$ by Theorem \ref{th:2.1}(b). 

The proof of injectivity of $\widehat\gamma_n|_{\widehat\Sigma_-^\perp}$ is then simple. If $\bs\sigma\in \widehat\Sigma_-^\perp$ satisfies $\widehat\gamma_n\bs\sigma=0$, then by part (a) 
\[
(\mathrm{div}\,\bs\sigma,\bs v)_{\widehat K}
=-(\bs\sigma,\bs\varepsilon(\bs v))_{\widehat K}=0
\qquad\forall \bs v\in \widehat{\bs V}
\]
and, taking $\bs v=\mathrm{div}\,\bs\sigma$ (by part (b)), we prove that $\mathrm{div}\,\bs\sigma=0$. Therefore, $\bs\sigma\in \widehat\Sigma_S^\circ\subset\widehat\Sigma_-$ and hence $\bs\sigma=0$. This completes the proof of (e) and (f).
\end{proof}

For the rest of this section we fix the stabilization parameter $\bs\tau\in \mathcal R_0(\partial\widehat K;\Rsym)$ such that
\begin{equation}\label{eq:3.0}
\bs\tau|_F \mbox{ is positive definite } \quad\forall F\in \mathcal F(\widehat K).
\end{equation}

\subsection{A projection on an extended space}
  
Given $(\bs\sigma,\bs u)\in H^1(\widehat K;\Rsym)\times H^1(\widehat K,\R^3)$, we look for
\begin{subequations}\label{eq:3.1}
\begin{equation}
\widehat\Pi_0(\bs\sigma,\bs u;\bs\tau):=
(\bs\sigma_{\widehat K},\bs u_{\widehat K})
\in \widehat\Sigma_+\times \widehat{\bs V}_+
\end{equation}
satisfying
\begin{alignat}{6}
\label{eq:3.1b}
(\bs u_{\widehat K}-\bs u,\bs v)_{\widehat K} = 0 
& \qquad && \forall \bs v\in \widehat{\bs V}_-,\\
\label{eq:3.1c}
(\bs\sigma_{\widehat K}-\bs\sigma,\bs\theta)_{\widehat K}=0 
&&&\forall \bs\theta\in \widehat\Sigma_-,\\
\label{eq:3.1d}
\langle \widehat\gamma_n(\bs\sigma_{\widehat K}-\bs\sigma)
-\bs\tau \mathrm P_{\widehat M}\widehat\gamma (\bs u_{\widehat K}-\bs u),
\bs\mu\rangle_{\partial\widehat K} =0 
&&&\forall \bs\mu\in \widehat{\bs M},\\
\label{eq:3.1e}
 -(\mathrm{div}\,(\bs\sigma_{\widehat K}-\bs\sigma),\bs w)_{\widehat K}
+\langle\bs\tau \mathrm P_{\widehat M}\widehat\gamma (\bs u_{\widehat K}-\bs u),
\widehat\gamma \bs w\rangle_{\partial\widehat K} =0
&&&\forall \bs w\in \widehat{\bs V}^\perp,
\end{alignat}
\end{subequations}
where $\widehat{\bs V}^\perp$ is the orthogonal complement of $\widehat{\bs V}$ in $\widehat{\bs V}_+$. Note that \eqref{eq:3.1e} is equivalent to
\[
(\bs\sigma_{\widehat K}-\bs\sigma,\bs\varepsilon(\bs w))_{\widehat K}
-\langle \widehat\gamma_n(\bs\sigma_{\widehat K}-\bs\sigma)
-\bs\tau \mathrm P_{\widehat M}\widehat\gamma (\bs u_{\widehat K}-\bs u),
\widehat\gamma\bs w\rangle_{\partial\widehat K} =0 
\qquad \forall \bs w\in \widehat{\bs V}^\perp,
\]
and, since $\bs\varepsilon(\widehat{\bs V})\subset \widehat\Sigma_-$ and $\widehat\gamma\widehat{\bs V}\subset\widehat{\bs M}$, equations \eqref{eq:3.1c} and \eqref{eq:3.1d} imply that we can substitute \eqref{eq:3.1e} by the condition
\begin{equation}\label{eq:3.2}
 -(\mathrm{div}\,(\bs\sigma_{\widehat K}-\bs\sigma),\bs w)_{\widehat K}
+\langle\bs\tau \mathrm P_{\widehat M}\widehat\gamma (\bs u_{\widehat K}-\bs u),
\widehat\gamma \bs w\rangle_{\partial\widehat K} =0\qquad 
\forall \bs w\in \widehat{\bs V}_+,
\end{equation}
which contains redundant restrictions already imposed in the other equations. Note also that the projection $\mathrm P_{\widehat M}$ can be eliminated in \eqref{eq:3.1d} but not in \eqref{eq:3.1e} or in \eqref{eq:3.2}, and that the bilinear form
\[
\langle\bs\tau\mathrm P_{\widehat M}\bs\mu,\bs\eta\rangle_{\partial\widehat K}
\]
is symmetric, bounded, and positive semi-definite in $L^2(\partial\widehat K;\R^3)$. We next prove that $\widehat\Pi_0$ is actually a well-defined projection onto $\widehat\Sigma_+\times\widehat{\bs V}_+$. 

\begin{proposition}\label{prop:3.1}
The process of defining the projection $\widehat\Pi_0$ in \eqref{eq:3.1} is equivalent to a square invertible linear system.
\end{proposition}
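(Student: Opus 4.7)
My plan is to split the argument into two pieces: first, verify that the system \eqref{eq:3.1} has exactly as many scalar equations as scalar unknowns (squareness); second, show that its homogeneous version (taking $(\bs\sigma,\bs u)=0$) admits only the trivial solution (injectivity). Since a square injective linear map is invertible, this suffices.

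For squareness, I would count dimensions directly. The unknowns live in $\widehat\Sigma_+\times\widehat{\bs V}_+$, while the test spaces supplying \eqref{eq:3.1b}--\eqref{eq:3.1e} are $\widehat{\bs V}_-$, $\widehat\Sigma_-$, $\widehat{\bs M}$ and $\widehat{\bs V}^\perp$ respectively. Matching the two totals reduces the identity to
\[
\mathrm{dim}\,\widehat{\bs M}=\mathrm{dim}\,\widehat\Sigma_-^\perp+\mathrm{dim}\,\widehat{\bs V}_-^\perp,
\]
which is exactly Theorem \ref{th:2.2}(d).

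For injectivity, set $(\bs\sigma,\bs u)=0$ and chain through the equations. Since $\mathrm{div}\,\widehat\Sigma_+\subset\widehat{\bs V}_-$ (Theorem \ref{th:2.2}(b)), \eqref{eq:3.1b} gives $(\mathrm{div}\,\bs\sigma_{\widehat K},\bs u_{\widehat K})_{\widehat K}=0$, so testing the equivalent global form \eqref{eq:3.2} against $\bs w=\bs u_{\widehat K}\in\widehat{\bs V}_+$ collapses to
\[
\langle\bs\tau\,\mathrm P_{\widehat M}\widehat\gamma\bs u_{\widehat K},\,\mathrm P_{\widehat M}\widehat\gamma\bs u_{\widehat K}\rangle_{\partial\widehat K}=0,
\]
after using self-adjointness of $\mathrm P_{\widehat M}$ together with $\bs\tau\,\mathrm P_{\widehat M}\widehat\gamma\bs u_{\widehat K}\in\widehat{\bs M}$ (valid because $\bs\tau$ is face-constant and symmetric). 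Positive definiteness \eqref{eq:3.0} of $\bs\tau$ forces $\mathrm P_{\widehat M}\widehat\gamma\bs u_{\widehat K}=0$. Plugging this into \eqref{eq:3.1d} yields $\widehat\gamma_n\bs\sigma_{\widehat K}\perp\widehat{\bs M}$, and since $\widehat\gamma_n\bs\sigma_{\widehat K}\in\widehat{\bs M}$ by Theorem \ref{th:2.2}(c), we get $\widehat\gamma_n\bs\sigma_{\widehat K}=0$. Testing \eqref{eq:3.2} once more with $\bs w=\mathrm{div}\,\bs\sigma_{\widehat K}\in\widehat{\bs V}_-\subset\widehat{\bs V}_+$ delivers $\mathrm{div}\,\bs\sigma_{\widehat K}=0$.

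I would then invoke the technical fact embedded in the proof of Theorem \ref{th:2.2}(e)--(f) (any element of $\widehat\Sigma_+$ with vanishing divergence and normal traction already lies in $\widehat\Sigma_S^\circ\subset\widehat\Sigma_-$), and pair it with \eqref{eq:3.1c} tested at $\bs\theta=\bs\sigma_{\widehat K}$, to conclude $\bs\sigma_{\widehat K}=0$. For $\bs u_{\widehat K}$, Betti's formula with the test field $\bs\varepsilon(\bs u_{\widehat K})\in\mathcal P_k(\widehat K;\Rsym)$ gives
\[
\|\bs\varepsilon(\bs u_{\widehat K})\|_{\widehat K}^2=-(\mathrm{div}\,\bs\varepsilon(\bs u_{\widehat K}),\bs u_{\widehat K})_{\widehat K}+\langle\widehat\gamma_n\bs\varepsilon(\bs u_{\widehat K}),\widehat\gamma\bs u_{\widehat K}\rangle_{\partial\widehat K}=0,
\]
since $\mathrm{div}\,\bs\varepsilon(\bs u_{\widehat K})\in\widehat{\bs V}_-$ is orthogonal to $\bs u_{\widehat K}$ by \eqref{eq:3.1b}, while $\widehat\gamma_n\bs\varepsilon(\bs u_{\widehat K})\in\widehat{\bs M}$ is killed when paired with $\widehat\gamma\bs u_{\widehat K}$ because $\mathrm P_{\widehat M}\widehat\gamma\bs u_{\widehat K}=0$. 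Hence $\bs u_{\widehat K}\in\mathcal M$; since $k\ge 1$ yields $\widehat\gamma\mathcal M\subset\widehat{\bs M}$, the vanishing of $\mathrm P_{\widehat M}\widehat\gamma\bs u_{\widehat K}$ reduces to $\widehat\gamma\bs u_{\widehat K}=0$, and injectivity of the trace on $\mathcal M$ (the six-dimensionality noted just before \eqref{eq:gmp}) closes the argument. I expect the rigid-motion endgame to be the only delicate step, since interior orthogonality alone cannot kill $\mathcal M$ in the worst case $k=1$, and one must exploit the boundary information together with the fact that rigid-motion traces fit inside $\widehat{\bs M}$.
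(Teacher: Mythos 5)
Your proposal is correct and follows essentially the same route as the paper: the dimension count via Theorem \ref{th:2.2}(d) for squareness, then the chain $\mathrm P_{\widehat M}\widehat\gamma\bs u_{\widehat K}=0 \Rightarrow \widehat\gamma_n\bs\sigma_{\widehat K}=0 \Rightarrow \bs\sigma_{\widehat K}=0 \Rightarrow \bs u_{\widehat K}\in\mathcal M \Rightarrow \bs u_{\widehat K}=0$ for injectivity. The only cosmetic difference is that where the paper kills $\bs\sigma_{\widehat K}$ by citing Theorem \ref{th:2.2}(f) as a black box (injectivity of $\widehat\gamma_n$ on $\widehat\Sigma_-^\perp$), you unroll that same argument inline --- deriving $\mathrm{div}\,\bs\sigma_{\widehat K}=0$ by testing \eqref{eq:3.2} and then combining the technical lemma with orthogonality to $\widehat\Sigma_-$ --- which is exactly the content of the paper's proof of that item.
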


\begin{proof}
Note first that by Theorem \ref{th:2.2}(d)
\begin{alignat*}{6}
\mathrm{dim}\,\widehat{\bs V}_-+\mathrm{dim}\,\widehat\Sigma_-
+\mathrm{dim}\,\widehat{\bs M}+\mathrm{dim}\,\widehat{\bs V}^\perp
=&\mathrm{dim}\,\widehat{\bs V}_-+\mathrm{dim}\,\widehat\Sigma_-
+\mathrm{dim}\,\widehat{\bs V}_-^\perp+\mathrm{dim}\,\widehat\Sigma_-^\perp \\
& +\mathrm{dim}\,\widehat{\bs V}_+-\mathrm{dim}\,\widehat{\bs V}\\
=&\mathrm{dim}\,\widehat\Sigma_++\mathrm{dim}\,\widehat{\bs V}_+,
\end{alignat*}
which proves that \eqref{eq:3.1} is equivalent to a linear system with as many equations as unknowns. We thus only need to prove uniqueness of solution.

A homogeneous solution of \eqref{eq:3.1} is a pair $(\bs\sigma_{\widehat K},\bs u_{\widehat K})\in \widehat\Sigma_+\times \widehat{\bs V}_+$ satisfying (recall how \eqref{eq:3.2} is a consequence of \eqref{eq:3.1})
\begin{subequations}
\begin{alignat}{6}
\label{eq:3.3a}
(\bs u_{\widehat K},\bs v)_{\widehat K} = 0 &
\qquad && \forall \bs v\in \widehat{\bs V}_-,\\
\label{eq:3.3b}
(\bs\sigma_{\widehat K}\,,\bs\theta)_{\widehat K}=0 
&&&\forall \bs\theta\in \widehat\Sigma_-,\\
\label{eq:3.3c}
\langle \widehat\gamma_n\bs\sigma_{\widehat K}
-\bs\tau \mathrm P_{\widehat M}\widehat\gamma \bs u_{\widehat K},
\bs\mu\rangle_{\partial\widehat K} =0 
&&&\forall \bs\mu\in \widehat{\bs M},\\
\label{eq:3.3d}
 -(\mathrm{div}\,\bs\sigma_{\widehat K},\bs w)_{\widehat K}
+\langle\bs\tau \mathrm P_{\widehat M}\widehat\gamma \bs u_{\widehat K},
\widehat\gamma \bs w\rangle_{\partial\widehat K} =0&
&&\forall \bs w\in \widehat{\bs V}_+.
\end{alignat}
\end{subequations}
We now take $\bs w=\bs u_{\widehat K}\in \widehat{\bs V}_-^\perp$ (with the complement in $\widehat{\bs V}_+$; also see \eqref{eq:3.3a}) in \eqref{eq:3.3d}, recalling that $\mathrm{div}\,\bs\sigma_{\widehat K}\in \widehat{\bs V}_-$ (cf. Theorem \ref{th:2.2}(b)) and obtain
\[
\langle\bs\tau \mathrm P_{\widehat M}\widehat\gamma\bs u_{\widehat K},
	\widehat\gamma\bs u_{\widehat K}\rangle_{\partial\widehat K}
	=\langle\bs\tau \mathrm P_{\widehat M}\widehat\gamma\bs u_{\widehat K},
	\mathrm P_{\widehat M}\widehat\gamma\bs u_{\widehat K}\rangle_{\partial\widehat K}
	=0.
\]
This argument uses that $\bs\tau$ is piecewise constant, so that multiplication by $\bs\tau$ is an endomorphism in $\widehat{\bs M}$. Since $\bs\tau$ is positive definite on each face, this proves that $\mathrm P_{\widehat M}\widehat\gamma\bs u_{\widehat K}=\bs 0$. 

Using the above conclusion and taking $\bs\mu=\widehat\gamma_n \bs\sigma_{\widehat K}\in \widehat{\bs M}$ in \eqref{eq:3.3c} (cf. Theorem \ref{th:2.2}(c)), it follows that $\widehat\gamma_n\bs\sigma_{\widehat K}=\bs 0$. Given the fact that $\bs\sigma_{\widehat K}\in \widehat\Sigma_-^\perp$ (by \eqref{eq:3.3b}) and Theorem \ref{th:2.2}(f), this proves that $\bs\sigma_{\widehat K}=\bs 0$. 

Note finally that $\mathrm{div}\, \bs\varepsilon(\bs u_{\widehat K})\in \widehat{\bs V}_-$ (Theorem \ref{th:2.2}(b)) and $\widehat\gamma_n\bs\varepsilon(\bs u_{\widehat K})\in 
\widehat{\bs M}$ (Theorem \ref{th:2.2}(c)), so that
\begin{alignat*}{6}
(\bs\varepsilon(\bs u_{\widehat K}),\bs\varepsilon(\bs u_{\widehat K}))_{\widehat K}
=& -(\mathrm{div}\,\bs\varepsilon(\bs u_{\widehat K}),\bs u_{\widehat K})_{\widehat K}
+\langle \widehat\gamma_n \bs\varepsilon(\bs u_{\widehat K}),
\widehat\gamma\bs u_{\widehat K}\rangle_{\partial\widehat K}\\
=&\langle \widehat\gamma_n \bs\varepsilon(\bs u_{\widehat K}),
\mathrm P_{\widehat M}\widehat\gamma\bs u_{\widehat K}\rangle_{\partial\widehat K}=0,
\end{alignat*}
and therefore $\bs u_{\widehat K}\in \mathcal M$. This implies that $\widehat\gamma\bs u_{\widehat K}=\mathrm P_{\widehat M}\widehat\gamma\bs u_{\widehat K}=\bs 0$ and, therefore, $\bs u_{\widehat K}=\bs 0,$ which completes the proof.
\end{proof}

Looking at the proof of Proposition \ref{prop:3.1}, it is clear that we could have also defined the projection for any $\bs\tau\in \mathcal R_0(\partial\widehat K;\Rsym)$ such that $\bs\tau|_F$ is negative definite on each face.

\begin{proposition}[Stability]\label{prop:3.2}
For any $\tau_{\max}\ge \tau_{\min}>0$ and $k\ge 1$, there exists $C=C(\tau_{\max},\tau_{\min},k)$ such that if $(\bs\sigma_{\widehat K},\bs u_{\widehat K})=\widehat\Pi_0(\bs\sigma,\bs u;\bs\tau)$, then
\begin{equation}\label{eq:3.55}
\|\bs\sigma_{\widehat K}\|_{\widehat K}
+\|\bs u_{\widehat K}\|_{\widehat K}
\le C (\|\bs\sigma\|_{1,\widehat K}+\|\bs u\|_{1,\widehat K})
\quad \forall (\bs\sigma,\bs u)\in H^1(\widehat K;\Rsym\times \R^3),
\end{equation}
whenever $\bs\tau\in \mathcal R_0(\partial\widehat K;\Rsym)$ satisfies
\begin{equation}\label{eq:3.5}
\tau_{\min}\le \bs\mu\cdot (\bs\tau|_F\bs\mu) \le \tau_{\max}
 \qquad  
\forall\bs\mu\in \R^3, |\bs\mu|=1, \quad \forall F\in \mathcal F(\widehat K).
\end{equation}
\end{proposition}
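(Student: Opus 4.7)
The plan is to treat the defining system \eqref{eq:3.1} as a square linear system whose matrix depends on $\bs\tau$ and whose right-hand side depends linearly on $(\bs\sigma,\bs u)$. Proposition \ref{prop:3.1} has already established invertibility for every admissible $\bs\tau$, so $\widehat\Pi_0(\,\cdot\,;\bs\tau)$ is a well-defined linear map. Stability then reduces to two separate bounds: controlling the right-hand side functionals by $\|\bs\sigma\|_{1,\widehat K}+\|\bs u\|_{1,\widehat K}$, and bounding the operator norm of the inverse of the system uniformly over the admissible range of $\bs\tau$.

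For the first bound, the volume terms $(\bs u,\bs v)_{\widehat K}$ in \eqref{eq:3.1b} and $(\bs\sigma,\bs\theta)_{\widehat K}$ in \eqref{eq:3.1c} are controlled by $\|\bs u\|_{\widehat K}$ and $\|\bs\sigma\|_{\widehat K}$ through Cauchy--Schwarz. For the boundary expression in \eqref{eq:3.1d}, since $\bs\sigma\in H^1(\widehat K;\Rsym)$ the normal trace $\widehat\gamma_n\bs\sigma=\bs\sigma\bs n$ is a genuine $L^2(\partial\widehat K;\R^3)$ function with $\|\widehat\gamma_n\bs\sigma\|_{\partial\widehat K}\lesssim \|\bs\sigma\|_{1,\widehat K}$ by the standard trace theorem, and similarly $\|\mathrm P_{\widehat M}\widehat\gamma\bs u\|_{\partial\widehat K}\le \|\widehat\gamma\bs u\|_{\partial\widehat K}\lesssim \|\bs u\|_{1,\widehat K}$; the factor $\bs\tau$ is dominated by $\tau_{\max}$. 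The term from \eqref{eq:3.1e} is treated in the same way, using in addition $\|\mathrm{div}\,\bs\sigma\|_{\widehat K}\lesssim \|\bs\sigma\|_{1,\widehat K}$.

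For the second bound I would invoke a compactness argument. Fix bases of $\widehat\Sigma_+\times\widehat{\bs V}_+$ and of the test space $\widehat\Sigma_-\times\widehat{\bs V}_-\times\widehat{\bs M}\times\widehat{\bs V}^\perp$. With respect to these bases the matrix of \eqref{eq:3.1} has entries that depend continuously (in fact affinely) on $\bs\tau$, since $\bs\tau$ only enters through the bilinear form $\langle\bs\tau\mathrm P_{\widehat M}\widehat\gamma(\cdot),\widehat\gamma(\cdot)\rangle_{\partial\widehat K}$. The set of $\bs\tau\in\mathcal R_0(\partial\widehat K;\Rsym)$ satisfying \eqref{eq:3.5} is closed and bounded, hence compact, in this finite-dimensional parameter space. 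By Proposition \ref{prop:3.1} the matrix is invertible at every such $\bs\tau$, and matrix inversion is continuous on the open set of invertible matrices, so the norm of the inverse is a continuous function of $\bs\tau$ on the compact set and attains its maximum there. Combining this uniform bound with the right-hand side estimate yields \eqref{eq:3.55} with $C$ depending only on $\tau_{\max}$, $\tau_{\min}$, and $k$.

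The main potential subtlety is the treatment of the boundary contributions in \eqref{eq:3.1d}--\eqref{eq:3.1e}: in general, $H(\mathrm{div})$-regularity would only provide $\widehat\gamma_n\bs\sigma\in H^{-1/2}(\partial\widehat K;\R^3)$, which is not immediately compatible with the $L^2$-style duality bracket appearing in the system and, more importantly, would not be paired with the $H^1$ norm on the right. This is bypassed here by exploiting the assumed $H^1$-regularity of $\bs\sigma$, which upgrades the normal trace to an $L^2$ function on $\partial\widehat K$ and makes all pairings unambiguously bounded; everything else reduces to straightforward finite-dimensional compactness on the reference element.
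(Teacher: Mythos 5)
Your proposal is correct and follows essentially the same route as the paper: the authors also identify the admissible $\bs\tau$ with a compact subset $\mathbb K$ of $\R^{24}$, note that the system's bilinear form is affine in these parameters (so the solution operator is a rational, hence continuous, function of $\bs\tau$ on the open set where Proposition \ref{prop:3.1} gives invertibility), and conclude by boundedness on $\mathbb K$; the right-hand-side control via the trace theorem is absorbed there into the statement that the bilinear forms $a_j$ are bounded on $H^1\times H^1$. The only cosmetic difference is that you separate the data bound from the inverse-matrix bound, while the paper packages both into the boundedness of the solution operator $T_k(\mathbf x):\mathcal U\to\mathcal U_k$.
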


\begin{proof}
Numbering the faces of $\widehat K$ and the entries of a symmetric matrix with indices from one to six, we can identify $\mathcal R_0(\partial\widehat K;\Rsym)\equiv (\Rsym)^4\equiv \R^{24}$. We can thus make the identification
\[
\{\bs\tau\in \mathcal R_0(\partial\widehat K;\Rsym)\,:\,
\bs\tau\mbox{ satisfies  \eqref{eq:3.0}}\}  \equiv \mathbb O \subset \R^{24},
\]
where $\mathbb O$ is an open set. We can also identify
\[
\{\bs\tau\in \mathcal R_0(\partial\widehat K;\Rsym)\,:\,
\bs\tau\mbox{ satisfies  \eqref{eq:3.5}}\}  \equiv \mathbb K 
 \subset \R^{24},
\]
where $\mathbb K\subset\mathbb O$ is compact.  To be more specific, we can write the identification $\bs\tau\equiv \mathbf x=(x_1,\ldots,x_{24})$ as follows:
\begin{align*}
\bs\tau=\sum_{i=1}^{24}x_i\bs\chi_i,
\end{align*}
where $\bs\chi_i\in\mc R_0(\pp\widehat{K};\mbb R_\mr{sym}^{3\times3})$ satisfies that
$\bs\chi_{6(p-1)+q}$ is supported on the $p$-th face ($p=1\rightarrow 4$) and its $q$-th entry ($q=1\rightarrow 6$) in $\mbb R_\mr{sym}^{3\times3}$ is equal to $1$ and the rest of the entries are all equal to $0$.

Consider now the continuous trial and test spaces
\begin{alignat*}{6}
\mathcal U:= & H^1(\widehat K;\Rsym)\times H^1(\widehat K;\R^3),\\
\mathcal V:= & 
	L^2(\widehat K;\Rsym) \times L^2(\widehat K;\R^3)
	\times  L^2(\partial\widehat K;\R^3)\times H^1(\widehat K;\R^3), 
\end{alignat*}
and their discrete counterparts
\[
\mathcal U_k := \widehat\Sigma_+\times \widehat{\bs V}_+,
\qquad
\mathcal V_k :=  
	\widehat{\bs V}_- \times \widehat\Sigma_-
	\times \widehat{\bs M} \times \widehat{\bs V}^\perp.
\]
We can define bilinear forms
\[
a_j:\mathcal U\times \mathcal V\to \mathbb R \qquad j=0,\ldots,24
\]
as follows:
\begin{align*}
a_0((\bs\sigma,\bs u),(\bs v,\bs\theta,\bs\mu,\bs w))&:=
(\bs\sigma,\bs\theta)_{\widehat{K}}+(\bs u,\bs v)_{\widehat{K}}
+\dualpr{\widehat{\gamma}_n\bs\sigma,\bs\mu}_{\pp\widehat{K}}
-(\div\bs\sigma,\bs w)_{\widehat{K}},\\
a_j((\bs\sigma,\bs u),(\bs v,\bs\theta,\bs\mu,\bs w))&:=
-\dualpr{\bs\chi_j\mr P_{\widehat{M}}\widehat{\gamma}\bs u,\bs\mu}_{\pp\widehat{K}}
+\dualpr{\bs\chi_j\mr P_{\widehat{M}}\widehat{\gamma}\bs u,\widehat{\gamma}\bs w}_{\pp\widehat{K}}.
\end{align*}
We can see that $a_j$ are all bounded $\bs\tau$-independent (but dependent on $k$ through the operator $\mathrm P_{\widehat M}$) bilinear forms.
Then equations \eqref{eq:3.1} can be rephrased in the following form: given $U\in \mathcal U$, find $\widehat\Pi_0 U\in \mathcal U_k$ such that
\begin{equation}\label{eq:3.7}
a_0(\widehat\Pi_0 U-U,V)+\sum_{j=1}^{24} x_j a_j(\widehat\Pi_0 U-U,V)=0
\qquad\forall V\in \mathcal V_k,
\end{equation}
Equations \eqref{eq:3.7} are uniquely solvable for every $\mathbf x\in \mathbb O$ (this is a restatement of Proposition \ref{prop:3.1})  and define a bounded linear operator
$T_k(\mathbf x): \mathcal U \to \mathcal U_k.$ The function $T_k$, from $\mathbb O$ to the space of bounded linear operators from $\mathcal U$ to $\mathcal U_k$, is rational and therefore bounded on the compact set $\mathbb K$. We can thus bound
\begin{equation}\label{eq:3.8}
\| T_k(\mathbf x) U\|_\star \le C(k,\mathbb K,\star) \| U\|_{\mathcal U}
\qquad\forall U\in \mathcal U,
\end{equation}
where $\|\cdot\|_\star$ is any norm we choose in $\mathcal U_k$. If we select the norm
\[
\|(\bs\sigma,\bs u)\|_\star:=
	\|\bs\sigma\|_{\widehat K}
	+\|\bs u\|_{\widehat K},
\]
in $\mathcal U_k$, then \eqref{eq:3.8} becomes \eqref{eq:3.55}.
\end{proof}

A simple argument shows that algebraic condition \eqref{eq:3.5} is equivalent to asking that the spectrum of $\bs\tau|_F$ is contained in $[\tau_{\min},\tau_{\max}]$ for all $F\in \mathcal F(\widehat K)$ and also to the inequality
\begin{equation}\label{eq:3.55a}
\tau_{\min}\|\bs\mu\|_{\partial\widehat K}^2
	\le \langle\bs\tau\,\bs\mu,\bs\mu\rangle_{\partial\widehat K}
	\le \tau_{\max}\|\bs\mu\|_{\partial\widehat K}^2
\qquad
\forall\bs\mu\in L^2(\partial\widehat K;\R^3).
\end{equation}

\subsection{The projection and the remainder}

Let now $\mathrm P_{\widehat\Sigma}:L^2(\widehat K;\Rsym)\to \widehat\Sigma$ be the orthogonal projection onto $\widehat\Sigma$. For $(\bs\sigma,\bs u)\in H^1(\widehat K;\Rsym)\times H^1(\widehat K;\R^3)$ we define
\begin{subequations}\label{eq:3.10}
\begin{equation}
\label{eq:3.10a}
\widehat\Pi(\bs\sigma,\bs u;\bs\tau):=(\bs\sigma_{\widehat K}^c,\bs u_{\widehat K})
	\in \widehat\Sigma \times \widehat{\bs V}_+,
\qquad
\widehat{\mathrm R}(\bs\sigma,\bs u;\bs\tau):=
	\widehat\gamma_n(\bs\sigma_{\widehat K}-\bs\sigma_{\widehat K}^c)
	\in \widehat{\bs M},
\end{equation}
where
\begin{equation}
\label{eq:3.10b}
\bs\sigma_{\widehat K}^c:=\mathrm P_{\widehat\Sigma}\bs\sigma_{\widehat K},
	\qquad
(\bs\sigma_{\widehat K},\bs u_{\widehat K})=\widehat\Pi_0(\bs\sigma,\bs u;\bs\tau).
\end{equation}
\end{subequations}
Since $\widehat\Sigma\times \widehat{\bs V}_+\subset \widehat\Sigma_+\times \widehat{\bs V}_+$, it follows that $\widehat\Pi_0(\bs\theta,\bs v;\bs\tau)=(\bs\theta,\bs v)$ for all $(\bs\theta,\bs v)\in \widehat\Sigma\times \widehat{\bs V}_+$ and therefore
\begin{equation}\label{eq:3.111}
\widehat\Pi(\bs\theta,\bs v;\bs\tau)=(\bs\theta,\bs v),
	\qquad
\widehat{\mathrm R}(\bs\theta,\bs v;\bs\tau)=0
	\qquad
\forall (\bs\theta,\bs v)\in \widehat\Sigma\times \widehat{\bs V}_+.
\end{equation}
In particular, $\widehat\Pi$ is a projection onto $\widehat\Sigma\times \widehat{\bs V}_+$. 
Note that we do not give a set of equations to define $\widehat\Pi$, which is given as the application of $\widehat\Pi_0$ followed by an orthogonal projection applied to the first component of the output. However, the following equations relate the projection $\widehat\Pi$ and the associated remainder $\widehat{\mathrm R}$.

\begin{proposition}\label{prop:3.3}
Let $\bs\tau\in \mathcal R_0(\partial\widehat K;\Rsym)$ satisfy \eqref{eq:3.0}, then $(\bs\sigma_{\widehat K}^c,\bs u_{\widehat K}):=\widehat\Pi(\bs\sigma,\bs u;\bs\tau)$ and $\bs\delta_{\widehat K}:=\widehat{\mathrm R}(\bs\sigma,\bs u;\bs\tau)$ satisfy
\begin{subequations}
\begin{alignat}{6}
\label{eq:3.11a}
(\bs u_{\widehat K}-\bs u,\mathrm{div}\,\bs\theta)_{\widehat K} &= 0 
 \qquad && \forall \bs\theta \in \widehat\Sigma,\\
 \label{eq:3.11c}
 -(\mathrm{div}\,(\bs\sigma_{\widehat K}^c-\bs\sigma),\bs w)_{\widehat K}
	+\langle\bs\tau \mathrm P_{\widehat M}\widehat\gamma (\bs u_{\widehat K}-\bs u),
	\widehat\gamma \bs w\rangle_{\partial\widehat K} &=
	\langle\bs\delta_{\widehat K},\widehat\gamma \bs w\rangle_{\partial\widehat K}
&\quad&\forall \bs w\in \widehat{\bs V}_+,\\
\label{eq:3.11b}
-\langle \widehat\gamma_n(\bs\sigma_{\widehat K}^c-\bs\sigma)
	-\bs\tau \mathrm P_{\widehat M}\widehat\gamma (\bs u_{\widehat K}-\bs u),
	\bs\mu\rangle_{\partial\widehat K} &=
	\langle\bs\delta_{\widehat K}, \bs\mu \rangle_{\partial\widehat K} 
&&\forall \bs\mu\in \widehat{\bs M}.
\end{alignat}
\end{subequations}
\end{proposition}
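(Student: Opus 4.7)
The plan is to treat the three identities separately; all three follow by combining the defining equations \eqref{eq:3.1} of $\widehat\Pi_0$ (and its consequence \eqref{eq:3.2}) with the definitions $\bs\sigma_{\widehat K}^c := \mathrm P_{\widehat\Sigma}\bs\sigma_{\widehat K}$ and $\bs\delta_{\widehat K} := \widehat\gamma_n(\bs\sigma_{\widehat K} - \bs\sigma_{\widehat K}^c)$. I would handle them in order of increasing (mild) difficulty.

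For \eqref{eq:3.11a}, observe that $\bs\theta\in\widehat\Sigma = \mc P_k(\widehat K;\Rsym)$ forces $\mathrm{div}\,\bs\theta \in \mc P_{k-1}(\widehat K;\R^3) = \widehat{\bs V}_-$, so testing \eqref{eq:3.1b} with $\bs v = \mathrm{div}\,\bs\theta$ produces the identity immediately. For \eqref{eq:3.11b}, I would substitute $\widehat\gamma_n(\bs\sigma_{\widehat K}-\bs\sigma) = \widehat\gamma_n(\bs\sigma_{\widehat K}^c-\bs\sigma) + \bs\delta_{\widehat K}$ into \eqref{eq:3.1d} (valid for every $\bs\mu\in\widehat{\bs M}$ since $\bs\sigma_{\widehat K}-\bs\sigma_{\widehat K}^c$ is polynomial) and move the remainder term to the right-hand side.

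The only substantive step is \eqref{eq:3.11c}. The key observation is that for $\bs w\in\widehat{\bs V}_+ = \mc P_{k+1}(\widehat K;\R^3)$ the strain $\bs\varepsilon(\bs w)$ lies in $\mc P_k(\widehat K;\Rsym) = \widehat\Sigma$, while by construction $\bs\sigma_{\widehat K} - \bs\sigma_{\widehat K}^c = (\mathrm I - \mathrm P_{\widehat\Sigma})\bs\sigma_{\widehat K}$ is $L^2(\widehat K)$-orthogonal to $\widehat\Sigma$. Betti's formula therefore yields
\[
(\mathrm{div}(\bs\sigma_{\widehat K} - \bs\sigma_{\widehat K}^c),\bs w)_{\widehat K}
= -(\bs\sigma_{\widehat K} - \bs\sigma_{\widehat K}^c, \bs\varepsilon(\bs w))_{\widehat K}
+ \langle \bs\delta_{\widehat K}, \widehat\gamma\bs w\rangle_{\partial\widehat K}
= \langle \bs\delta_{\widehat K}, \widehat\gamma\bs w\rangle_{\partial\widehat K},
\]
and subtracting this identity from \eqref{eq:3.2} gives \eqref{eq:3.11c}.

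I do not anticipate any real obstacle: all three identities are algebraic consequences of \eqref{eq:3.1}. The only mildly non-trivial ingredient is the polynomial-degree inclusion $\bs\varepsilon(\widehat{\bs V}_+)\subset\widehat\Sigma$ that lets the $L^2$-orthogonality be activated through Betti's formula; this is transparent from the definitions, so the proof should be short and purely computational.
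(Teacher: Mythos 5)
Your proposal is correct and follows essentially the same route as the paper: \eqref{eq:3.11a} from \eqref{eq:3.1b} via $\mathrm{div}\,\widehat\Sigma\subset\widehat{\bs V}_-$, \eqref{eq:3.11b} by rewriting \eqref{eq:3.1d} with the definition of $\bs\delta_{\widehat K}$, and \eqref{eq:3.11c} from \eqref{eq:3.2} combined with Betti's formula and the $L^2$-orthogonality of $\bs\sigma_{\widehat K}-\bs\sigma_{\widehat K}^c$ to $\bs\varepsilon(\widehat{\bs V}_+)\subset\widehat\Sigma$. This is exactly the paper's argument (its equation \eqref{eq:3.12} is your orthogonality observation), so there is nothing to add.
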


\begin{proof}
Following the definition \eqref{eq:3.10}, we introduce the intermediate projection $(\bs\sigma_{\widehat K},\bs u_{\widehat K})=\widehat\Pi_0(\bs\sigma,\bs u;\bs\tau)$ so that $\bs\sigma_{\widehat K}^c=\mathrm P_{\widehat\Sigma}\bs\sigma_{\widehat K}$ and $\bs\delta_{\widehat K}=\widehat\gamma_n(\bs\sigma_{\widehat K}-\bs\sigma_{\widehat K}^c)$. 

Since $\mathrm{div}\,\widehat\Sigma\subset \widehat{\bs V}_-$ (this is a direct consequence of the definitions), \eqref{eq:3.11a} is a consequence of \eqref{eq:3.1b}. Note also that $\bs\varepsilon(\widehat{\bs V}_+)\subset \widehat\Sigma$ (again by definition) and therefore 
\begin{equation}\label{eq:3.12}
(\bs\sigma_{\widehat K}^c-\bs\sigma_{\widehat K},
	\bs\varepsilon(\bs w))_{\widehat K}
	=0 \quad \forall \bs w\in \widehat{\bs V}_+.
\end{equation}
The identity \eqref{eq:3.11b} is a direct consequence of \eqref{eq:3.1d}. Finally, by \eqref{eq:3.2} and \eqref{eq:3.12}
\begin{alignat*}{6}
 -(\mathrm{div}\,(\bs\sigma_{\widehat K}^c-\bs\sigma),\bs w)_{\widehat K}
	+\langle\bs\tau \mathrm P_{\widehat M}\widehat\gamma (\bs u_{\widehat K}-\bs u),
	\widehat\gamma \bs w\rangle_{\partial\widehat K}
	=& -(\mathrm{div}\,(\bs\sigma_{\widehat K}^c-\bs\sigma_{\widehat K}),
		\bs w)_{\widehat K}\\
	=&-\langle\widehat\gamma_n (\bs\sigma_{\widehat K}^c-\bs\sigma_{\widehat K}),
		\widehat\gamma \bs w\rangle_{\partial\widehat K},
\end{alignat*}
which proves \eqref{eq:3.11c}.
\end{proof}

\begin{proposition}[Estimate in the reference element]\label{prop:3.4}
For any $\tau_{\max}\ge \tau_{\min}>0$, and integers $k\ge 1$, $1\le m\le k+1$, there exists $C=C(\tau_{\max},\tau_{\min},k,m)$ such that if
\begin{alignat*}{6}
 (\bs\sigma_{\widehat K}^c,\bs u_{\widehat K})&=
\widehat\Pi(\bs\sigma,\bs u;\eta\bs\tau), 
\qquad (\bs\sigma,\bs u)\in H^m(\widehat K;\Rsym)
\times H^{m+1}(\widehat K;\R^3)
\qquad 0\neq \eta\in \R,\\
 \bs\delta_{\widehat K}&=\widehat{\mathrm R}(\bs\sigma,\bs u;\eta\bs\tau),
\end{alignat*}
with $\bs\tau\in \mathcal R_0(\partial\widehat K;\Rsym)$ satisfying \eqref{eq:3.5}, then 
\[
\|\bs\sigma-\bs\sigma_{\widehat K}^c\|_{\widehat K}
+|\eta|\, \|\bs u-\bs u_{\widehat K}\|_{\widehat K}
+\|\bs\delta_{\widehat K}\|_{\partial\widehat K}
\le C \left( |\bs\sigma|_{m,\widehat K}+|\eta|\, |\bs u|_{m+1,\widehat K}\right).
\]
\end{proposition}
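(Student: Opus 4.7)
The plan is to reduce the general case to $\eta=\pm 1$ by a scaling argument, combine the stability estimate of Proposition~\ref{prop:3.2} with the fact that $\widehat\Pi$ is the identity on $\widehat\Sigma\times\widehat{\bs V}_+$ to obtain a quasi-optimality inequality, and conclude with Bramble-Hilbert. For the scaling, direct inspection of \eqref{eq:3.1} shows that each equation is invariant under the simultaneous replacement $\bs u\mapsto \eta\bs u$, $\bs u_{\widehat K}\mapsto \eta\bs u_{\widehat K}$, $\eta\bs\tau\mapsto\bs\tau$ (the factor $\eta$ on the left-hand side of \eqref{eq:3.1b} cancels, while those in \eqref{eq:3.1d}--\eqref{eq:3.1e} combine with the stabilization). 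Hence
\[
\widehat\Pi_0(\bs\sigma,\bs u;\eta\bs\tau)=(\bs\sigma_{\widehat K},\bs u_{\widehat K})\quad\Longleftrightarrow\quad\widehat\Pi_0(\bs\sigma,\eta\bs u;\bs\tau)=(\bs\sigma_{\widehat K},\eta\bs u_{\widehat K}).
\]
Because the projection $\mathrm P_{\widehat\Sigma}$ in \eqref{eq:3.10} acts only on the stress component, the same equivariance holds for $\widehat\Pi$, and $\widehat{\mr R}(\bs\sigma,\bs u;\eta\bs\tau)=\widehat{\mr R}(\bs\sigma,\eta\bs u;\bs\tau)$. It therefore suffices to establish the estimate for $\eta=1$; the case $\eta<0$ follows by repeating Proposition~\ref{prop:3.2} with the negative-definite stabilization $-\bs\tau$, as indicated in the remark right after Proposition~\ref{prop:3.1}.

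With $\eta=1$ fixed, quasi-optimality follows because $\widehat\Pi_0$ is linear and is the identity on $\widehat\Sigma\times\widehat{\bs V}_+\subset\widehat\Sigma_+\times\widehat{\bs V}_+$, so for every $(\bs\theta,\bs v)\in\widehat\Sigma\times\widehat{\bs V}_+$,
\[
\widehat\Pi_0(\bs\sigma,\bs u;\bs\tau)-(\bs\sigma,\bs u)=\widehat\Pi_0(\bs\sigma-\bs\theta,\bs u-\bs v;\bs\tau)-(\bs\sigma-\bs\theta,\bs u-\bs v).
\]
Bounding the first term on the right with Proposition~\ref{prop:3.2} yields
\[
\|\bs\sigma_{\widehat K}-\bs\sigma\|_{\widehat K}+\|\bs u_{\widehat K}-\bs u\|_{\widehat K}\lesssim\|\bs\sigma-\bs\theta\|_{1,\widehat K}+\|\bs u-\bs v\|_{1,\widehat K}.
\]
Since $\mathrm P_{\widehat\Sigma}\bs\theta=\bs\theta$, we rewrite $\bs\sigma_{\widehat K}^c-\bs\sigma_{\widehat K}=(\mathrm P_{\widehat\Sigma}-\mathrm{Id})(\bs\sigma_{\widehat K}-\bs\theta)$, so $\|\bs\sigma_{\widehat K}^c-\bs\sigma\|_{\widehat K}$ satisfies the same bound. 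For the remainder, $\bs\sigma_{\widehat K}-\bs\sigma_{\widehat K}^c$ lies in the finite-dimensional space $\widehat\Sigma_+$ on the fixed reference tetrahedron, and norm equivalence gives $\|\bs\delta_{\widehat K}\|_{\partial\widehat K}=\|\widehat\gamma_n(\bs\sigma_{\widehat K}-\bs\sigma_{\widehat K}^c)\|_{\partial\widehat K}\lesssim\|\bs\sigma_{\widehat K}-\bs\sigma_{\widehat K}^c\|_{\widehat K}$.

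Finally, taking the infimum of the right-hand side over $(\bs\theta,\bs v)\in\mathcal P_k(\widehat K;\Rsym)\times\mathcal P_{k+1}(\widehat K;\R^3)$ and invoking the Deny-Lions/Bramble-Hilbert lemma produces an upper bound $|\bs\sigma|_{m,\widehat K}+|\bs u|_{m+1,\widehat K}$ for every $1\le m\le k+1$ (the range for $\bs u$ is $1\le m+1\le k+2$, which is compatible). Reinstating the scaling replaces $\bs u$ by $\eta\bs u$ on both sides and yields exactly the stated inequality. The one non-routine ingredient is the scaling equivariance in the first step: it is what packages the $\eta$-dependence cleanly so that the $\bs\tau$-dependent constants furnished by the compactness argument in Proposition~\ref{prop:3.2} can be used uniformly in $\eta$.
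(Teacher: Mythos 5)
Your proposal is correct and follows essentially the same route as the paper's proof: the scaling equivariance $\widehat\Pi_0(\bs\sigma,\bs u;\eta\bs\tau)\leftrightarrow\widehat\Pi_0(\bs\sigma,\eta\bs u;\bs\tau)$, quasi-optimality from the stability of Proposition~\ref{prop:3.2} combined with reproduction on $\widehat\Sigma\times\widehat{\bs V}_+$, finite-dimensional norm equivalence for $\bs\delta_{\widehat K}$, and Bramble--Hilbert. The only (harmless) redundancy is the separate remark about $\eta<0$: the scaling identity already converts $\eta\bs\tau$ into the positive-definite $\bs\tau$ for every $\eta\neq0$, so the negative-definite variant of Proposition~\ref{prop:3.2} is never needed.
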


\begin{proof}
Let $(\bs\sigma_{\widehat K},\bs u_{\widehat K})=\widehat\Pi_0(\bs\sigma,\bs u;\eta\,\bs\tau)$. Note that $(\bs\sigma_{\widehat K},\eta\,\bs u_{\widehat K})=\widehat\Pi_0(\bs\sigma,\eta\,\bs u;\bs\tau)$ (this is obvious from the equations that define $\widehat\Pi_0$, namely \eqref{eq:3.1}) and therefore 
\[
(\bs\sigma_{\widehat K}^c,\eta\,\bs u_{\widehat K})
	=\widehat\Pi(\bs\sigma,\eta\,\bs u;\bs\tau),
		\qquad
\bs\delta_{\widehat K}=\widehat{\mathrm R}(\bs\sigma,\eta\,\bs u;\bs\tau).
\]
By Proposition \ref{prop:3.2}, we have  
\[
\|\bs\sigma_{\widehat K}\|_{\widehat K}
+|\eta|\, \|\bs u_{\widehat K}\|_{\widehat K}
\le C(\|\bs\sigma\|_{1,\widehat K}+|\eta|\,\|\bs u\|_{1,\widehat K}),
\]
for some constant $C$ depending only on $\tau_{\min}$, $\tau_{\max}$ and $k$. Since $\widehat\gamma_n:\widehat\Sigma_+\to \widehat{\bs M}$ is bounded, there exists a constant $D=D(k)$ such that
\[
\|\bs\delta_{\widehat K}\|_{\partial\widehat K}
\le D\,\|\bs\sigma_{\widehat K}-\bs\sigma_{\widehat K}^c\|_{\widehat K}
\le 2 D\, \|\bs\sigma-\bs\sigma_{\widehat K}\|_{\widehat K}
+D\|\bs\sigma-\mr P_{\widehat\Sigma}\bs\sigma\|_{\widehat K}. 
\]
Taking now $\bs\theta\in \widehat\Sigma$ and $\bs v\in \widehat{\bs V}_+$, and applying \eqref{eq:3.111} to the pair $(\bs\theta,\eta\,\bs v)$, we have
\[
\|\bs\sigma-\bs\sigma_{\widehat K}\|_{\widehat K}
+|\eta|\, \|\bs u-\bs u_{\widehat K}\|_{\widehat K}
\le (1+C) (\|\bs\sigma-\bs\theta\|_{1,\widehat K}
		+|\eta|\, \|\bs u-\bs v\|_{1,\widehat K}),
\]
and
\[
\|\bs\delta_{\widehat K}\|_{\partial\widehat K}
\le 2 D (1+C) (\|\bs\sigma-\bs\theta\|_{1,\widehat K}+|\eta|\, \|\bs u-\bs v\|_{1,\widehat K})+D\|\bs\sigma-\mr P_{\widehat{\Sigma}}\bs\sigma\|_{\widehat K}.
\]
Note that $\mr P_{\widehat{\Sigma}}$ is the $L_2$ projection onto $\widehat\Sigma$.
Therefore, there exists a constant $C'=C'(\tau_{\min},\tau_{\max},k)$ such that
\[
\|\bs\sigma-\bs\sigma_{\widehat K}\|_{\widehat K}
+|\eta|\, \|\bs u-\bs u_{\widehat K}\|_{\widehat K}
+\|\bs\delta_{\widehat K}\|_{\partial\widehat K}
\le 
	C' \left(\inf_{\bs\theta\in \widehat\Sigma}\|\bs\sigma-\bs\theta\|_{1,\widehat K}
		+|\eta|\, \inf_{\bs v\in \widehat{\bs V}_+}\|\bs u-\bs v\|_{1,\widehat K}\right).
\]
Finally, notice that $\bs\sigma-\bs\sigma_{\widehat{K}}^c=\bs\sigma-\mr P_{\widehat{\Sigma}}\bs\sigma+\mr P_{\widehat{\Sigma}}(\bs\sigma-\bs\sigma_{\widehat{K}})$. The result follows now by a compactness argument (Bramble-Hilbert lemma).
\end{proof}

\section{The projection in the physical elements}\label{sec:proj_phy}
\subsection{Pull-backs and push-forwards}

In this section we derive a systematic approach to changes of variables for vector- and matrix-valued functions from a general shape-regular tetrahedron to the reference element. The language mimics that of \cite{sayas2013raviart} (or \cite{DuSa:2019}). Let $K$ be a tetrahedron 
and $F:\widehat K\to K$ be an invertible affine map from the reference element to $K$. We will denote $\mathrm B:=\mathrm DF$ and $J:=\det\mathrm B$. We also consider the piecewise constant function $a:\partial\widehat K\to (0,\infty)$ such that for all integrable $\phi$,
\[
\int_{\partial K} \phi=\int_{\partial\widehat K} |a| \, \phi\circ F.
\]
The trace and normal trace operators on $K$ will be denoted $\gamma$ and $\gamma_n$. 
Given
\[
\bs u,\bs u^*:K\to \R^3,
	\quad
\bs\sigma,\bs\sigma^*:K\to \Rsym,
	\quad
\bs\mu,\bs\mu^*:\partial K \to \R^3,
\]
we define
\begin{alignat*}{6}
\widehat{\bs u} & := \mathrm B^\top \bs u\circ F & & :\widehat K\to \R^3, 
	& \qquad &
\widecheck{\bs u}^* &:= |J|\mathrm B^{-1} \bs u^*\circ F &  :\widehat K\to \R^3,\\
\widehat{\bs\sigma} &:= |J|\mathrm B^{-1}(\bs\sigma\circ F)\mathrm B^{-\top}
	 &  & : \widehat K \to \Rsym,
	 & &
\widecheck{\bs\sigma}^* &:= \mathrm B^\top (\bs\sigma^*\circ F)\mathrm B
	 &  : \widehat K \to \Rsym,\\
\widehat{\bs\mu} &:= \mathrm B^\top \bs\mu\circ F &&:\partial\widehat K \to \R^3,
	& &
\widecheck{\bs\mu}^* &:=|a| \mathrm B^{-1}\bs\mu^*\circ F
	&:\partial\widehat K \to \R^3,
\end{alignat*}
where $\mathrm B^{-\top}=(\mathrm B^\top)^{-1}$. The following properties are easy to prove.

\begin{proposition}[Change of variables in integrals]\label{prop:4.1}
We have the following identities
\begin{subequations}
\begin{alignat}{6}
\label{eq:4.1a}
(\bs u,\bs u^*)_K 
	&=(\widehat{\bs u},\widecheck{\bs u}^*)_{\widehat K}
		& \qquad & \forall \bs u,\bs u^*\in L^2(K;\R^3),\\
\label{eq:4.1b}
(\bs\sigma,\bs\sigma^*)_K
	&=(\widehat{\bs\sigma},\widecheck{\bs\sigma}^*)_{\widehat K}
		& & \forall \bs\sigma,\bs\sigma^*\in L^2(K;\Rsym),\\
\label{eq:4.1c}
\langle\bs\mu,\bs\mu^*\rangle_{\partial K}
	&=\langle\widehat{\bs\mu},\widecheck{\bs\mu}^*\rangle_{\partial\widehat K}		
		& & \forall \bs\mu,\bs\mu^*\in L^2(\partial K;\R^3).
\end{alignat}
\end{subequations}
\end{proposition}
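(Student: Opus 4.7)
The proof proposal for Proposition \ref{prop:4.1} is that all three identities reduce to pointwise algebraic cancellations between $\mathrm B$ and $\mathrm B^{-1}$ (and $|J|$ absorbing the volume Jacobian, $|a|$ absorbing the surface Jacobian), after invoking the standard change of variables formula for integrals.

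The plan for \eqref{eq:4.1a} is as follows. First I would apply the change of variables $\int_K f = \int_{\widehat K} |J|\, f\circ F$ to rewrite $(\bs u,\bs u^*)_K = \int_{\widehat K} |J|\,(\bs u\circ F)\cdot(\bs u^*\circ F)$. Then I would verify the pointwise identity
\[
(\mathrm B^\top \mathbf a)\cdot(\mathrm B^{-1}\mathbf b) = \mathbf a^\top\mathrm B\mathrm B^{-1}\mathbf b=\mathbf a\cdot\mathbf b
\qquad \forall \mathbf a,\mathbf b\in\R^3,
\]
applied with $\mathbf a=\bs u\circ F$ and $\mathbf b=\bs u^*\circ F$, to conclude
\[
\widehat{\bs u}\cdot\widecheck{\bs u}^* = |J|\,(\bs u\circ F)\cdot(\bs u^*\circ F),
\]
from which \eqref{eq:4.1a} follows by integrating over $\widehat K$.

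The plan for \eqref{eq:4.1b} uses the same two-step recipe, but the algebraic lemma is now an application of the cyclic property of the trace. The Frobenius product of two symmetric matrices satisfies $\bs\sigma:\bs\sigma^* = \mr{tr}(\bs\sigma\bs\sigma^*)$, so I would compute
\[
\widehat{\bs\sigma}:\widecheck{\bs\sigma}^*
= \mr{tr}\bigl(|J|\mathrm B^{-1}(\bs\sigma\circ F)\mathrm B^{-\top}\mathrm B^\top(\bs\sigma^*\circ F)\mathrm B\bigr)
= |J|\,\mr{tr}\bigl((\bs\sigma\circ F)(\bs\sigma^*\circ F)\bigr)
= |J|\,(\bs\sigma:\bs\sigma^*)\circ F,
\]
after the inner $\mathrm B^{-\top}\mathrm B^\top$ cancels and cyclicity removes the outer $\mathrm B^{-1}$ and $\mathrm B$. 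Integrating over $\widehat K$ and applying the volume change of variables then yields \eqref{eq:4.1b}.

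The plan for \eqref{eq:4.1c} is identical in spirit to \eqref{eq:4.1a}, but using the surface change-of-variables identity $\int_{\partial K}\phi = \int_{\partial\widehat K}|a|\,\phi\circ F$ stated in the definition of $a$. The same algebraic identity $(\mathrm B^\top\mathbf a)\cdot(\mathrm B^{-1}\mathbf b)=\mathbf a\cdot\mathbf b$ gives
\[
\widehat{\bs\mu}\cdot\widecheck{\bs\mu}^* = |a|\,(\bs\mu\circ F)\cdot(\bs\mu^*\circ F),
\]
and integrating over $\partial\widehat K$ completes the argument. I do not anticipate any real obstacle in this proposition: the definitions of $\widehat{\phantom{a}}$ and $\widecheck{\phantom{a}}$ have been rigged precisely so that each Jacobian and each conjugation by $\mathrm B$ is compensated exactly, so the entire proof is three short lines of bookkeeping with the cyclicity of trace being the only non-trivial ingredient.
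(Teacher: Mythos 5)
Your proposal is correct and is exactly the argument the paper has in mind: the paper states these identities without proof (``easy to prove''), and the intended verification is precisely the change-of-variables formula combined with the pointwise cancellations $(\mathrm B^\top\mathbf a)\cdot(\mathrm B^{-1}\mathbf b)=\mathbf a\cdot\mathbf b$ and, for the matrix case, the cyclicity of the trace (valid here because both arguments are symmetric, so $\bs\sigma:\bs\sigma^*=\tr{(\bs\sigma\bs\sigma^*)}$). All three computations check out; there is nothing to add.
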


The next group of results about changes of variables involve the interaction of integrals with differential operators or trace operators.

\begin{proposition}[Change of variables in bilinear forms]\label{prop:4.2}
We have the following indentities:
\begin{subequations}
\begin{alignat}{6}
\label{eq:4.2a}
(\bs\sigma,\bs\varepsilon(\bs u))_K
	&=(\widehat{\bs\sigma},\widehat{\bs\varepsilon}(\widehat{\bs u}))_{\widehat K}
	& \qquad & \forall \bs\sigma\in L^2(K;\Rsym), 
				&&\forall\bs u\in H^1(K;\R^3),\\
\label{eq:4.2b}
(\bs u,\mathrm{div}\,\bs\sigma)_K
	&=(\widehat{\bs u},\widehat{\mathrm{div}}\,\widehat{\bs\sigma})_{\widehat K}
	& & \forall \bs \sigma\in H^1(K;\Rsym), 
				&&\forall\bs u\in L^2(K;\R^3),\\
\label{eq:4.2co}
\langle \gamma_n\bs\sigma,\gamma\bs u\rangle_{\partial K}
	& =\langle\widehat\gamma_n \widehat{\bs\sigma},
		\widehat\gamma\widehat{\bs u}\rangle_{\partial\widehat K}
	& & \forall \bs\sigma\in H^1(K;\Rsym), 
				&&\forall\bs u\in H^1(K;\R^3),\\
\label{eq:4.2c}
\langle \gamma_n\bs\sigma,\bs\mu\rangle_{\partial K}
	& =\langle\widehat\gamma_n \widehat{\bs\sigma},
		\widehat{\bs\mu}\rangle_{\partial\widehat K}
	& & \forall \bs\sigma\in H^1(K;\Rsym), 
				&&\forall\bs\mu\in L^2(\partial K;\R^3),\\
\label{eq:4.2d}
\langle \bs\tau\,\gamma\bs u,\bs\mu\rangle_{\partial K}
	& =\langle\widecheck{\bs\tau}\,\widehat\gamma \widehat{\bs u},
		\widehat{\bs\mu}\rangle_{\partial\widehat K}
	&& \forall \bs u\in H^1(K;\R^3), 
				&&\forall\bs\mu\in L^2(\partial K;\R^3),\\
\label{eq:4.2e}
\langle \bs\tau\,\gamma\bs u,\gamma \bs v\rangle_{\partial K}
	& =\langle\widecheck{\bs\tau}\,\widehat\gamma \widehat{\bs u},
		\widehat\gamma \widehat{\bs v}\rangle_{\partial\widehat K}
	&& \forall \bs u, \bs v\in H^1(K;\R^3).
\end{alignat}
\end{subequations}
Here $\bs\tau\in \mathcal R_0(\partial K;\Rsym)$ and
\begin{equation}\label{eq:taucheck}
\widecheck{\bs\tau}:=|a| \mathrm B^{-1}(\bs\tau\circ F)\mathrm B^{-\top}
	\in \mathcal R_0(\partial\widehat K;\Rsym).
\end{equation}
\end{proposition}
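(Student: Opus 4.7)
All six identities reduce to Proposition 4.1 once the differential and trace operators appearing inside the brackets are made to commute with the appropriate pull-backs. Concretely, I want to establish five intertwining identities: (i) $\widehat{\bs\varepsilon}(\widehat{\bs u})=\widecheck{\bs\varepsilon(\bs u)}$; (ii) $\widehat{\div}\,\widehat{\bs\sigma}=\widecheck{\div\bs\sigma}$; (iii) $\widehat\gamma\widehat{\bs u}=\widehat{\gamma\bs u}$, which is trivial from the definition of the hat-transform; (iv) $\widehat\gamma_n\widehat{\bs\sigma}=\widecheck{\gamma_n\bs\sigma}$; and (v) $\widecheck{\bs\tau\bs\phi}=\widecheck{\bs\tau}\,\widehat{\bs\phi}$ for piecewise constant $\bs\tau$ and $\bs\phi\in L^2(\partial K;\R^3)$. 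Identity (i) is a one-line chain-rule computation: from $\widehat{\bs u}(\widehat x)=\mathrm B^\top\bs u(F(\widehat x))$ one obtains $\mathrm D\widehat{\bs u}=\mathrm B^\top(\mathrm D\bs u\circ F)\,\mathrm B$, and symmetrizing gives $\mathrm B^\top\bs\varepsilon(\bs u)\circ F\,\mathrm B$. Identity (ii) is its divergence counterpart, using that $F$ is affine (so $|J|$ is constant and passes through $\partial_{\widehat x_j}$) together with the cancellation $\sum_j\mathrm B_{mj}(\mathrm B^{-1})_{jl}=\delta_{ml}$. Identity (v) drops out from the definitions after inserting $\mathrm B^{-\top}\mathrm B^\top=I$.

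With (i)--(iii) in hand, (a) and (b) follow immediately from Proposition 4.1: identity (a) is Proposition 4.1(b) applied to the pair $(\bs\sigma,\bs\varepsilon(\bs u))$, and (b) is Proposition 4.1(a) applied to $(\bs u,\div\bs\sigma)$. For (c), I apply Betti's formula on both $K$ and $\widehat K$ to rewrite each side as $(\bs\sigma,\bs\varepsilon(\bs u))_K+(\div\bs\sigma,\bs u)_K$ and its reference analogue, and then invoke (a) and (b).

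For (d), I apply Proposition 4.1(c) with $\gamma_n\bs\sigma$ placed in the check slot to obtain $\langle\gamma_n\bs\sigma,\bs\mu\rangle_{\partial K}=\langle\widehat{\bs\mu},\widecheck{\gamma_n\bs\sigma}\rangle_{\partial\widehat K}$, and then replace $\widecheck{\gamma_n\bs\sigma}$ by $\widehat\gamma_n\widehat{\bs\sigma}$ via (iv). To establish (iv) cleanly, I test the difference $\widehat\gamma_n\widehat{\bs\sigma}-\widecheck{\gamma_n\bs\sigma}\in L^2(\partial\widehat K;\R^3)$ against $\widehat\gamma\widehat{\bs v}$ for arbitrary $\bs v\in H^1(K;\R^3)$: by (c) and Proposition 4.1(c) the pairing vanishes, and the hat-transforms of $H^1$-traces fill a dense subspace of $L^2(\partial\widehat K;\R^3)$, so the two elements coincide. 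Identity (e) then follows from Proposition 4.1(c) with $\bs\mu^*=\bs\tau\gamma\bs u$ combined with (v), and (f) is the specialization $\bs\mu=\gamma\bs v$ of (e).

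I expect (ii) to be the main obstacle: unlike the one-line verification of (i), the divergence identity is a genuinely indexed computation and one must be careful to pair the chain-rule factor $\mathrm B$ coming from $\partial_{\widehat x_j}(\bs\sigma\circ F)$ with the $\mathrm B^{-1}$ sitting on the second slot of the Piola-type $\widehat{\bs\sigma}$. The only other delicate step is the density argument used in (iv); the rest reduces to linear algebra and an appeal to Proposition 4.1.
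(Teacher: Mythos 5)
Your proposal is correct and follows essentially the same route as the paper: establish the intertwining identities for $\bs\varepsilon$, $\mathrm{div}$, $\gamma$, $\gamma_n$ and $\bs\tau$ (with the divergence identity done by the explicit index computation and the normal-traction one via Betti's formula), reduce everything to Proposition \ref{prop:4.1}, and pass from \eqref{eq:4.2co} to \eqref{eq:4.2c} by density of $H^1$-traces in $L^2(\partial\widehat K;\R^3)$. The only cosmetic difference is that you package the density step as the identity $\widehat\gamma_n\widehat{\bs\sigma}=\widecheck{\gamma_n\bs\sigma}$ in $L^2(\partial\widehat K;\R^3)$, whereas the paper applies density directly to the bilinear identity; the content is the same.
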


\begin{proof}
Using the definitions, it is easy to prove that
\begin{equation}\label{eq:4.44}
\widecheck{\bs\varepsilon(\bs u)}=\widehat{\bs\varepsilon}(\widehat{\bs u}),
	\qquad
\widehat{\gamma\bs u}=\widehat\gamma\widehat{\bs u},
	\qquad 
\widecheck{\bs\tau\bs\mu}=\widecheck{\bs\tau}\,\widehat{\bs\mu}.
\end{equation}
Then \eqref{eq:4.2a}, \eqref{eq:4.2d}, and \eqref{eq:4.2e} are easy consequences of Proposition \ref{prop:4.1}. Let now $\mathrm B=(b_{ij})_{i,j=1}^3$ and $\mathrm A=(a_{ij})_{i,j=1}^3=\mathrm B^{-1}$. Using implicit summation for repeated indices, we have for each $i$
\begin{alignat*}{6}
(\widehat{\mathrm{div}}\,\widehat{\bs\sigma})_i
	&=\partial_{\widehat x_k}\widehat{\bs\sigma}_{ki}
	=|J| a_{kl}a_{im}\partial_{\widehat x_k}(\sigma_{lm}\circ F)\\
	&=|J| a_{kl} a_{im} b_{jk} (\partial_{x_j}\sigma_{lm}) \circ F
	=|J| \delta_{jl} a_{im} (\partial_{x_j}\sigma_{lm}) \circ F\\
	&=|J| a_{im} (\partial_{x_j}\sigma_{jm})\circ F 
	=|J| a_{im} (\mathrm{div}\,\bs\sigma)_m\circ F\\
	&=|J| (\mathrm B^{-1}\,\mathrm{div}\,\bs\sigma)_i\circ F
	=(\widecheck{\mathrm{div}\,\bs\sigma})_i
\end{alignat*}
and therefore $\widehat{\mathrm{div}}\,\widehat{\bs\sigma}=\widecheck{\mathrm{div}\,\bs\sigma}$. This and \eqref{eq:4.1a} prove \eqref{eq:4.2b}. Note next that, using identities we have already proved and Proposition \ref{prop:4.1}, we have
\begin{subequations}\label{eq:5.2}
\begin{alignat}{6}
\langle\widecheck{\gamma_n\bs\sigma},
	\widehat\gamma\widehat{\bs u}\rangle_{\partial\widehat K}
		=&\langle\widecheck{\gamma_n\bs\sigma},
	\widehat{\gamma \bs u}\rangle_{\partial\widehat K}
		= \langle\gamma_n\bs\sigma,\gamma\bs u\rangle_{\partial K}
		=(\mathrm{div}\,\bs\sigma,\bs u)_K
		+(\bs\sigma,\bs\varepsilon(\bs u))_K\\ 
		=&(\widehat{\mathrm{div}}\,\widehat{\bs\sigma},\widehat{\bs u})_{\widehat K}
		+(\widehat{\bs\sigma},\widehat{\bs\varepsilon}(\widehat{\bs u}))_{\widehat K}
		=\langle\widehat\gamma_n\widehat{\bs\sigma},
			\widehat\gamma\widehat{\bs u}\rangle_{\partial \widehat K},
\end{alignat}
\end{subequations}
and \eqref{eq:4.2co} is thus proved.
Since $H^{1/2}(\partial K;\R^3)$ is dense in $L^2(\partial K;\R^3)$,  \eqref{eq:4.2c} follows from \eqref{eq:4.2co}.  
\end{proof}

The final result of this section contains all scaling inequalities. 
Shape-regularity can be rephrased as the asymptotic equivalences (recall that we are in three dimensions)
\begin{equation}\label{eq:4.3}
\|\mathrm B\|\approx h_K, 
	\qquad
\|\mathrm B^{-1}\|\approx h_K^{-1},
	\qquad
|J| \approx h_K^3,
	\qquad
|a| \approx h_K^2.
\end{equation}
Therefore
\begin{equation}\label{eq:4.4}
\|\phi\|_K \approx h_K^{3/2} \|\phi\circ F\|_{\widehat K},
	\qquad
\|\phi\|_{\partial K} \approx h_K \|\phi\circ F\|_{\partial\widehat K}.
\end{equation}

For the hat transformations, we have the following scaling rules, which can be easily proved using \eqref{eq:4.3}, \eqref{eq:4.4}, and the chain rule. 

\begin{proposition}[Scaling equivalences]\label{prop:4.3}
With hidden constants depending on shape-regularity and on $m\ge 0$, we have
\begin{subequations}
\begin{alignat}{6}
|\widehat{\bs u}|_{m,\widehat K} 
	& \approx h_K^{m-1/2}|\bs u|_{m,K}
	& \qquad & \forall \bs u\in H^m(K;\R^3),\\
|\widehat{\bs\sigma}|_{m,\widehat K} 
	& \approx h_K^{m-1/2} |\bs\sigma|_{m,K}
	& & \forall \bs\sigma\in H^m(K;\Rsym),\\
\|\widecheck{\bs\mu}\|_{\partial\widehat K}\approx
\|\widehat{\bs\mu}\|_{\partial\widehat K}
	& \approx \|\bs\mu\|_{\partial K} 
	& & \forall \bs\mu\in L^2(\partial K;\R^3). 
\end{alignat}
\end{subequations}
\end{proposition}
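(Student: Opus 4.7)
The plan is to derive all three equivalences directly from the definitions of $\widehat{\bs u}$, $\widehat{\bs\sigma}$, $\widehat{\bs\mu}$ and $\widecheck{\bs\mu}$, using only the chain rule together with the asymptotic scalings collected in \eqref{eq:4.3}-\eqref{eq:4.4}. Since $F$ is affine, $\mathrm B$ is constant and $|J|$, $|a|$ are constant (piecewise, for $a$), so composition with $F$ and the algebraic prefactors can be handled separately. The only real work is tracking powers of $h_K$; there is no substantive obstacle beyond this bookkeeping.

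For the first equivalence, differentiating $\widehat{\bs u}=\mathrm B^\top\bs u\circ F$ a total of $m$ times in $\hat x$ gives pointwise $|\mathrm D^m\widehat{\bs u}|\lesssim\|\mathrm B\|^{m+1}\,|\mathrm D^m\bs u|\circ F$: one factor of $\|\mathrm B\|$ from the constant prefactor $\mathrm B^\top$, and $\|\mathrm B\|^m$ from the chain rule, which turns each $\hat x$-derivative into a contraction of $\mathrm D_x$ with $\mathrm B$. Taking $L^2(\widehat K)$ norms and using \eqref{eq:4.4} to rewrite $\||\mathrm D^m\bs u|\circ F\|_{\widehat K}\approx h_K^{-3/2}|\bs u|_{m,K}$, together with $\|\mathrm B\|\approx h_K$, yields $|\widehat{\bs u}|_{m,\widehat K}\lesssim h_K^{m+1-3/2}|\bs u|_{m,K}=h_K^{m-1/2}|\bs u|_{m,K}$. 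The reverse inequality comes from the identity $\bs u=\mathrm B^{-\top}\widehat{\bs u}\circ F^{-1}$ and the symmetric estimate using $\|\mathrm B^{-1}\|\approx h_K^{-1}$.

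The same argument applied to $\widehat{\bs\sigma}=|J|\mathrm B^{-1}(\bs\sigma\circ F)\mathrm B^{-\top}$ accumulates the additional factor $|J|\,\|\mathrm B^{-1}\|^2\approx h_K^{3}\cdot h_K^{-2}=h_K$ from the algebraic prefactors, which combined with $\|\mathrm B\|^m\approx h_K^m$ from the chain rule and the volume rescaling $h_K^{-3/2}$ from \eqref{eq:4.4} gives precisely $h_K^{m-1/2}|\bs\sigma|_{m,K}$ again. Inverting the change of variables yields the lower bound as before.

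For the boundary estimates, the boundary rescaling $\|\phi\circ F\|_{\partial\widehat K}\approx h_K^{-1}\|\phi\|_{\partial K}$ from \eqref{eq:4.4} replaces the volume one. In $\widehat{\bs\mu}=\mathrm B^\top\bs\mu\circ F$, the factor $\|\mathrm B\|\approx h_K$ exactly cancels the $h_K^{-1}$; in $\widecheck{\bs\mu}=|a|\mathrm B^{-1}\bs\mu\circ F$, the product $|a|\,\|\mathrm B^{-1}\|\approx h_K^2\cdot h_K^{-1}=h_K$ plays the same role. Hence both $\|\widehat{\bs\mu}\|_{\partial\widehat K}$ and $\|\widecheck{\bs\mu}\|_{\partial\widehat K}$ are equivalent to $\|\bs\mu\|_{\partial K}$, and therefore to each other, with the reverse inequalities obtained by inverting the transformations. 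The only pitfall in the whole argument is keeping the exponents aligned, and that is settled by writing each transformation out componentwise once.
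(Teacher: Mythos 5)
Your argument is correct and is exactly the route the paper intends: the paper gives no written proof of Proposition \ref{prop:4.3} beyond remarking that it follows from \eqref{eq:4.3}, \eqref{eq:4.4} and the chain rule, and your bookkeeping of the powers of $h_K$ (prefactor, chain rule, volume or surface Jacobian) fills in precisely that computation, with the reverse inequalities correctly obtained by inverting the affine change of variables.
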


\subsection{The projection and the remainder on $K$}
Consider the spaces
\begin{alignat*}{6}
\Sigma(K):=&
	\{ \bs\sigma\in L^2(K;\Rsym)\,:\,\widehat{\bs\sigma}\in \widehat\Sigma\}
	=\mathcal P_k(K;\Rsym),\\
\bs V_+(K) :=& \{\bs u\in L^2(K;\R^3)\,:\,\widehat{\bs u}\in \widehat{\bs V}_+\}
	=\mathcal P_{k+1}(K;\R^3),\\
\bs M(\partial K):=&
	\{\bs\mu\in L^2(\partial K;\R^3)\,:\,\widehat{\bs\mu}\in \widehat{\bs M}\}\\
	=&\{\bs\mu\in L^2(\partial K;\R^3)\,:\,\widecheck{\bs\mu}\in \widehat{\bs M}\}
	=\mathcal R_k(\partial K;\R^3).
\end{alignat*}
The projection and the remainder are defined by a pull-back process: given $(\bs\sigma,\bs u)\in H^1(K;\Rsym)\times H^1(K;\R^3)$, we define
\begin{subequations}\label{eq:5.3}
\begin{equation}
\Pi(\bs\sigma,\bs u;\bs\tau):=(\bs\sigma_K,\bs u_K)
	\in \Sigma(K)\times \bs V_+(K), 
\qquad
\mathrm R(\bs\sigma,\bs u;\bs\tau):=\bs\delta_K\in \bs M(\partial K),
\end{equation}
by the relations
\begin{alignat}{6}
\label{eq:4.9bbb}
(\widehat{\bs\sigma_K},\widehat{\bs u_K})
	&=\widehat\Pi(\widehat{\bs\sigma},\widehat{\bs u};\widecheck{\bs\tau}),
	\qquad \widecheck{\bs\tau}:=|a| \mathrm B^{-1}(\bs\tau\circ F)\mathrm B^{-\top},\\
\widecheck{\bs\delta_K} 
	&=\widehat{\mathrm R}(\widehat{\bs\sigma},\widehat{\bs u};\widecheck{\bs\tau}).
\end{alignat}
\end{subequations}
We now prove Theorem \ref{th:PROJ}. We start by proving a technical lemma, continue showing that equations \eqref{eq:tauProp} hold (we present this as Proposition \ref{prop:5.2}), and finish by proving the estimates \eqref{eq:1.3a}. 

\begin{lemma}\label{lemma:5.1}
If $\mathrm P_M$ is the $L^2(\partial K;\R^3)$ orthogonal projector onto $\bs M(\partial K)$, then
\begin{equation}\label{eq:5.1}
\widehat{\mathrm P_M\bs\mu}=\mathrm P_{\widehat M}\widehat{\bs\mu}
	\qquad
	\forall\bs\mu \in L^2(\partial K;\R^3).
\end{equation}
Therefore, if $\bs\tau\in \mathcal R_0(\partial K;\Rsym)$, we have 
\begin{alignat*}{6}
\langle \bs\tau\,\mathrm P_M\gamma\bs u,\bs\mu\rangle_{\partial K}
	& =\langle\widecheck{\bs\tau}\,
	\mathrm P_{\widehat M}\widehat\gamma \widehat{\bs u},
		\widehat{\bs\mu}\rangle_{\partial\widehat K}
	&\qquad & \forall \bs u\in H^1(K;\R^3), 
				&&\forall\bs\mu\in L^2(\partial K;\R^3),\\
\langle \bs\tau\,\mathrm P_M\gamma\bs u,\gamma \bs v\rangle_{\partial K}
	& =\langle\widecheck{\bs\tau}\,
	\mathrm P_{\widehat M}\widehat\gamma \widehat{\bs u},
		\widehat\gamma \widehat{\bs v}\rangle_{\partial\widehat K}
	&& \forall \bs u, \bs v\in H^1(K;\R^3),
\end{alignat*}
with $\widecheck{\bs\tau}$ defined in \eqref{eq:4.9bbb}.
\end{lemma}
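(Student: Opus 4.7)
The plan has two parts. The first part establishes the commutation identity \eqref{eq:5.1} from the variational characterization of the $L^2$-orthogonal projection. Because $F$ is affine, $\mathrm B^{-1}$ is constant and $|a|$ is piecewise constant on $\partial\widehat K$, so the check transformation $\bs\eta\mapsto\widecheck{\bs\eta}$ sends $\bs M(\partial K)=\mathcal R_k(\partial K;\R^3)$ bijectively onto $\widehat{\bs M}=\mathcal R_k(\partial\widehat K;\R^3)$ (and the same holds for the hat transformation, which yields $\widehat{\mathrm P_M\bs\mu}\in\widehat{\bs M}$). Applying \eqref{eq:4.1c} to the Galerkin orthogonality $\langle\mathrm P_M\bs\mu-\bs\mu,\bs\eta\rangle_{\partial K}=0$ valid for all $\bs\eta\in\bs M(\partial K)$, I obtain
\begin{equation*}
\langle\widehat{\mathrm P_M\bs\mu}-\widehat{\bs\mu},\widecheck{\bs\eta}\rangle_{\partial\widehat K}=0 \qquad \forall\,\bs\eta\in\bs M(\partial K),
\end{equation*}
and the surjectivity of $\bs\eta\mapsto\widecheck{\bs\eta}$ onto $\widehat{\bs M}$ upgrades this to the full variational characterization of $\mathrm P_{\widehat M}\widehat{\bs\mu}$, proving \eqref{eq:5.1}.

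For the second part, I will combine \eqref{eq:5.1} with \eqref{eq:4.1c}, the algebraic identity $\widecheck{\bs\tau\bs\xi}=\widecheck{\bs\tau}\,\widehat{\bs\xi}$ from \eqref{eq:4.44}, and the relation $\widehat{\gamma\bs u}=\widehat\gamma\widehat{\bs u}$ also recorded in \eqref{eq:4.44}. Using the symmetry of $\bs\tau$ on each face (which transfers to $\widecheck{\bs\tau}=|a|\mathrm B^{-1}(\bs\tau\circ F)\mathrm B^{-\top}$), I shift $\bs\tau$ to the right of the pairing, apply the change of variables, and then shift $\widecheck{\bs\tau}$ back:
\begin{align*}
\langle\bs\tau\,\mathrm P_M\gamma\bs u,\bs\mu\rangle_{\partial K}
&=\langle\mathrm P_M\gamma\bs u,\bs\tau\bs\mu\rangle_{\partial K}
=\langle\widehat{\mathrm P_M\gamma\bs u},\widecheck{\bs\tau\bs\mu}\rangle_{\partial\widehat K} \\
&=\langle\mathrm P_{\widehat M}\widehat\gamma\widehat{\bs u},\widecheck{\bs\tau}\,\widehat{\bs\mu}\rangle_{\partial\widehat K}
=\langle\widecheck{\bs\tau}\,\mathrm P_{\widehat M}\widehat\gamma\widehat{\bs u},\widehat{\bs\mu}\rangle_{\partial\widehat K}.
\end{align*}
The second displayed identity follows by specializing $\bs\mu=\gamma\bs v$ and again invoking $\widehat{\gamma\bs v}=\widehat\gamma\widehat{\bs v}$.

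I do not anticipate a real obstacle; the only mildly delicate decision is which factor in the pairing carries the hat versus the check. The symmetry of $\bs\tau$ is what allows a clean matching with \eqref{eq:4.44}, and the affine character of $F$ is what guarantees that check and hat both preserve the piecewise polynomial space $\mathcal R_k$, which is the ingredient that promotes Galerkin orthogonality from $\bs M(\partial K)$ to $\widehat{\bs M}$.
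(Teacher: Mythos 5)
Your proposal is correct and follows essentially the same route as the paper: both establish \eqref{eq:5.1} by transporting the Galerkin orthogonality defining $\mathrm P_M$ through the change of variables \eqref{eq:4.1c}, using that the hat and check maps carry $\bs M(\partial K)$ onto $\widehat{\bs M}$, and then derive the $\bs\tau$-weighted identities from \eqref{eq:5.1}, Proposition \ref{prop:4.1}, \eqref{eq:4.44}, and the symmetry of $\bs\tau$ (your displayed chain simply makes explicit the step the paper leaves to the reader).
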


\begin{proof}
It follows from the definitions that $\widehat{\mathrm P_M\bs\mu}\in \widehat{\bs M}$ and that $\bs\mu^*\in \bs M(K)$ if and only if $\widecheck{\bs\mu}^*\in \widehat{\bs M}$. Therefore,
\[
\langle\widehat{\mathrm P_M\bs\mu},\widecheck{\bs\mu}^*\rangle_{\partial\widehat K}
	=\langle\mathrm P_M\bs\mu,\bs\mu^*\rangle_{\partial K}
	=\langle\bs\mu,\bs\mu^*\rangle_{\partial K}
	=\langle \widehat{\bs\mu},\widecheck{\bs\mu}^*\rangle_{\partial\widehat K}
	=\langle \mathrm P_{\widehat M}\widehat{\bs\mu},
		\widecheck{\bs\mu}^* \rangle_{\partial\widehat K}
		\qquad\forall\bs\mu^*\in \widehat{\bs M},
\]
and \eqref{eq:5.1} is proved. The next two identities in the statement follow from \eqref{eq:5.1} and Proposition \ref{prop:4.1} and  \eqref{eq:4.44}. 
\end{proof}

\begin{proposition}\label{prop:5.2}
Assume that $\bs\tau\in \mathcal R_0(\partial K;\Rsym)$ satisfies
\[
\langle\bs\tau\bs\mu,\bs\mu\rangle_{\partial K}>0 \qquad
\forall \bs\mu \in L^2(\partial K;\R^3), \quad\bs\mu\neq\bs 0.
\] 
 If $(\bs\sigma_K,\bs u_K)=\Pi(\bs\sigma,\bs u;\bs\tau)$ and $\bs\delta_K=\mathrm R(\bs\sigma,\bs u;\bs\tau)$, then the following equations hold:
\begin{subequations}
\begin{alignat}{6}
\label{eq:5.33a}
(\bs u_K-\bs u,\mathrm{div}\,\bs\theta)_{K} &= 0 
 \qquad && \forall \bs\theta \in \Sigma(K),\\  
\label{eq:5.33b}
  -(\mathrm{div}\,(\bs\sigma_K-\bs\sigma),\bs w)_K
	+\langle\bs\tau \mathrm P_M\gamma (\bs u_K-\bs u),
	\gamma \bs w\rangle_{\partial K} &=
	\langle\bs\delta_K,\gamma \bs w\rangle_{\partial K}
&\,\,&\forall \bs w\in \bs V_+(K),\\
\label{eq:5.33c}
-\langle \gamma_n(\bs\sigma_K-\bs\sigma)
	-\bs\tau \mathrm P_M \gamma (\bs u_K-\bs u),
	\bs\mu\rangle_{\partial  K} &=
	\langle\bs\delta_K, \bs\mu \rangle_{\partial K} 
&&\forall \bs\mu\in \bs M(\partial K).
\end{alignat}
\end{subequations}
\end{proposition}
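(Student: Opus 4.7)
The strategy is pure pull-back: every object on $K$ translates, via the hat/check transforms of Section~\ref{sec:proj_phy}, into a corresponding object on $\widehat K$ that has already been analyzed. The three equations \eqref{eq:5.33a}--\eqref{eq:5.33c} are precisely the image, under these transforms, of \eqref{eq:3.11a}--\eqref{eq:3.11c} in Proposition \ref{prop:3.3} applied to $(\widehat{\bs\sigma},\widehat{\bs u};\widecheck{\bs\tau})$.

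First I would verify the hypothesis transports correctly: since $\widecheck{\bs\tau}|_{\widehat F} = |a|\,\mathrm B^{-1}(\bs\tau|_F\circ F)\mathrm B^{-\top}$ is symmetric, and
\[
\bs\mu^\top\widecheck{\bs\tau}|_{\widehat F}\,\bs\mu
= |a|\,(\mathrm B^{-\top}\bs\mu)^\top(\bs\tau|_F\circ F)(\mathrm B^{-\top}\bs\mu) > 0
\qquad\forall\bs\mu\in\R^3\setminus\{\bs 0\},
\]
the assumed positivity of $\bs\tau$ implies $\widecheck{\bs\tau}$ satisfies \eqref{eq:3.0}. By the defining relations \eqref{eq:4.9bbb}, we then have $(\widehat{\bs\sigma_K},\widehat{\bs u_K}) = \widehat\Pi(\widehat{\bs\sigma},\widehat{\bs u};\widecheck{\bs\tau})$ and $\widecheck{\bs\delta_K} = \widehat{\mathrm R}(\widehat{\bs\sigma},\widehat{\bs u};\widecheck{\bs\tau})$, so Proposition~\ref{prop:3.3} is directly applicable with $\bs\sigma_{\widehat K}^c$ playing the role of $\widehat{\bs\sigma_K}$.

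Next I would translate each equation. For \eqref{eq:5.33a}, note that $\bs\theta\in \Sigma(K)$ iff $\widehat{\bs\theta}\in \widehat\Sigma$, and apply \eqref{eq:4.2b} to obtain $(\bs u_K-\bs u,\mathrm{div}\,\bs\theta)_K = (\widehat{\bs u_K}-\widehat{\bs u},\widehat{\mathrm{div}}\,\widehat{\bs\theta})_{\widehat K}$, which vanishes by \eqref{eq:3.11a}. For \eqref{eq:5.33b}, use $\bs w\in \bs V_+(K)$ iff $\widehat{\bs w}\in\widehat{\bs V}_+$, and combine \eqref{eq:4.2b} for the divergence term, Lemma~\ref{lemma:5.1} for the $\bs\tau$-stabilization term, and Proposition~\ref{prop:4.1}(c) in the form $\langle\bs\delta_K,\gamma\bs w\rangle_{\partial K} = \langle\widehat{\gamma\bs w},\widecheck{\bs\delta_K}\rangle_{\partial\widehat K} = \langle\widehat\gamma\widehat{\bs w},\widecheck{\bs\delta_K}\rangle_{\partial\widehat K}$ for the right-hand side; the resulting equation is \eqref{eq:3.11c}. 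For \eqref{eq:5.33c}, argue similarly: $\bs\mu\in\bs M(\partial K)$ iff $\widehat{\bs\mu}\in\widehat{\bs M}$, and use \eqref{eq:4.2c} to transform the traction pairing, Lemma~\ref{lemma:5.1} for the $\bs\tau$-term, and Proposition~\ref{prop:4.1}(c) together with $\widecheck{\bs\delta_K}=\widehat{\mathrm R}(\ldots)$ for the right-hand side; the resulting identity is \eqref{eq:3.11b}.

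The proof has no real technical obstacle; the only thing to watch is the bookkeeping of which functions carry hats versus checks so that Proposition~\ref{prop:4.1}(c) gets the correct pairing (in each instance exactly one of the two factors is a trace of an $H^1$ function and the other is a pure boundary object, pointing unambiguously to which transform to use), and the reminder that the $\mathrm P_M$-projection commutes with the hat transform, encapsulated in Lemma~\ref{lemma:5.1}.
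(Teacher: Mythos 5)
Your proposal is correct and follows essentially the same route as the paper: pull everything back to $\widehat K$ via the hat/check transforms, invoke Proposition \ref{prop:3.3} for $(\widehat{\bs\sigma},\widehat{\bs u};\widecheck{\bs\tau})$, and translate each equation using \eqref{eq:4.2b}, \eqref{eq:4.2c}, \eqref{eq:4.1c}, \eqref{eq:4.44} and Lemma \ref{lemma:5.1}. Your explicit check that the positivity hypothesis on $\bs\tau$ transports to \eqref{eq:3.0} for $\widecheck{\bs\tau}=|a|\mathrm B^{-1}(\bs\tau\circ F)\mathrm B^{-\top}$ is a detail the paper leaves implicit, and it is a worthwhile addition.
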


\begin{proof}
The equality \eqref{eq:5.33a} follows from changing variables to the reference element, applying \eqref{eq:3.11a} and \eqref{eq:4.2b}. Similarly, \eqref{eq:5.33b} follows from \eqref{eq:3.11c} using \eqref{eq:4.2b}, Lemma \ref{lemma:5.1}, \eqref{eq:4.1c} and \eqref{eq:4.44}. Finally, \eqref{eq:5.33c} follows from \eqref{eq:3.11b} applying \eqref{eq:4.2c}, Lemma \ref{lemma:5.1}, and \eqref{eq:4.1c}. 
\end{proof}

Assume now that $\bs\tau$ is of order $h_K^{-1}$, as expressed in \eqref{eq:tauhm1}. By \eqref{eq:4.3} and Proposition \ref{prop:4.3} we can write (take $\bs\eta=\widecheck{\bs\mu}$ for $\bs\mu\in L^2(\partial K;\R^3)$)
\begin{equation}\label{eq:5.5}
\alpha_1 C_1 h_K^{-1}\|\bs\eta\|_{\partial\widehat K}^2
	\le \langle \widecheck{\bs\tau}\, \bs\eta,
		\bs\eta\rangle_{\partial\widehat K}
	\le \alpha_2 C_2 h_K^{-1}\|\bs\eta\|_{\partial\widehat K}^2
\qquad
\forall \bs\eta\in L^2(\partial\widehat K;\R^3),
\end{equation}
where $\alpha_1$ and $\alpha_2$ are constants related to shape-regularity of $K$. We are now ready to apply Proposition \ref{prop:3.4} with $\tau_{\min}=\alpha_1 C_1$, $\tau_{\max}=\alpha_2C_2$ and $\eta=h_K^{-1}$ (compare \eqref{eq:5.5} with \eqref{eq:3.55a}). Using the definition of the projection \eqref{eq:5.3}, Proposition \ref{prop:4.3} for the scaling properties, and Proposition \ref{prop:3.4} for the estimates, we have
\begin{alignat*}{6}
\|\bs\sigma-\bs\sigma_K\|_K+h_K^{-1}\|\bs u-\bs u_K\|_K
	+ h_K^{1/2}\|\bs\delta_K\|_K
	\approx & h_K^{1/2}
		(\|\widehat{\bs\sigma}-\widehat{\bs\sigma_K}\|_{\widehat K}
		+h_K^{-1} \|\widehat{\bs u}-\widehat{\bs u_K}\|_{\widehat K}
		+\|\widecheck{\bs\delta_K}\|_{\partial\widehat K})\\
	\lesssim & h_K^{1/2} (|\widehat{\bs\sigma}|_{m,\widehat K}
		+h_K^{-1} |\widehat{\bs u}|_{m+1,\widehat K})\\
	\approx & h_K^m (|\bs\sigma|_{m,K}+ |\bs u|_{m+1,K}),
\end{alignat*}
and \eqref{eq:1.3a} is thus proved. 

\section{Steady-state elasticity}\label{sec:steady_state}

\subsection{Method and convergence estimates}\label{sec:st_main}

From now on, we shift our attention from the construction of the projection to its applications.
We begin by introducing more notation for the rest of the paper. Let $\Omega$ be a Lipschitz polyhedral domain in $\mbb R^3$. We denote the compliance tensor by $\mc A\in L^\infty(\Omega;\mc B(\mbb R_\mathrm{sym}^{3\times3}))$, where $\mc B(\Rsym)$ is the space of linear maps from $\Rsym$ to itself.  We assume $\rho\ge\rho_0$ for some positive constant $\rho_0$, and $\mc A$ is uniformly symmetric and positive almost everywhere on $\Omega$, i.e., there exists $C_0>0$ such that for almost all $\mathbf x\in \Omega$,
\begin{align*}
\left.\begin{array}{l}
(\mc A(\mathbf x)\bs\xi):\bs\chi = \bs\xi:(\mc A(\mathbf x)\bs\chi)\\[5pt]
(\mc A(\mathbf x)\bs\xi):\bs\xi \ge C_0 \bs\xi:\bs\xi
\end{array}
\right\}
\quad\forall\bs\xi,\bs\chi\in\mbb R_\mr{sym}^{3\times3}.
\end{align*} 

Let $\mc T_h$ be a family of conforming tetrahedral partitions of $\Omega$, which
we assume to be shape-regular.
Namely, there exists a fixed constant $c_0>0$, such that
$\frac{h_K}{\rho_K}\le c_0$ for all $K\in\mc T_h$,
where $\rho_K$ denotes the inradius of $K$. We denote $h:=\max_{K\in\mc T_h}h_K$ as the mesh size and set the following discrete spaces:
\begin{align*}
\bs V_h:=\prod_{K\in\mc T_h}\mathcal P_{k}(K;\mathbb R_\mr{sym}^{3\times3}),\quad
\bs W_h:=\prod_{K\in\mc T_h}\mathcal P_{k+1}(K;\mathbb R^3),\quad
\bs M_h:=\prod_{K\in\mc T_h}\mathcal R_k(\pp K;\mathbb R^3).
\end{align*}
The related discrete inner products are denoted as
\begin{align*}
(\bs\sigma,\bs\tau)_{\mc T_h}:=\sum_{K\in\mc T_h}(\bs\sigma,\bs\tau)_K,\quad
(\bs u,\bs v)_{\mc T_h}:=\sum_{K\in\mc T_h}(\bs u,\bs v)_K,\quad
\dualpr{\bs\mu,\bs\lambda}_{\pp\mc T_h}
:=\sum_{K\in\mc T_h}\dualpr{\bs\mu,\bs\lambda}_{\pp K}.
\end{align*}
Finally, we use the following notation for the discrete and the weighted discrete norms:
\begin{alignat*}{5}
&\|\cdot\|_{\mc T_h}:=(\cdot,\cdot)_{\mc T_h}^{1/2},
&\qquad &\|\cdot\|_{\pp\mc T_h}:=\dualpr{\cdot,\cdot}_{\pp\mc T_h}^{1/2},\\
&\|\cdot\|_{*_1}:=(*_1\,\cdot,\cdot)_{\mc T_h}^{1/2},
&&\|\cdot\|_{*_2}:=\dualpr{*_2\,\cdot,\cdot}_{\pp\mc T_h}^{1/2},
\end{alignat*}
where $*_1=\mc A$ or $\rho$, and $*_2=\bs\tau$ or $\bs\tau^{-1}$. Here $\bs\tau\in \prod_{K\in\mc T_h}\mc R_0(\pp K;\Rsym)$ is the stabilization function which we assume to satisfy \eqref{eq:tauhm1} on every element $K$.

In this section, we give a projection-based error analysis to the HDG+ method introduced in \cite{QiShSh:2018}. We will show that the analysis can be simplified by using Theorem \ref{th:PROJ}. To begin with, we review the steady-state linear elasticity equations:
\begin{subequations}\label{eq:st_pde}
\begin{alignat}{5}
\mc A\bs\sigma - \bs\varepsilon(\bs u) &= \bs 0 &\qquad&\mr{in}\ \Omega,\\
-\div\bs\sigma &= \bs f && \mr{in}\ \Omega,\\
\gamma\bs u&= \bs g &&\mr{on}\ \Gamma:=\pp\Omega,
\end{alignat}
\end{subequations}
where $\bs f\in L^2(\Omega;\mbb R^3)$ and $\bs g\in H^{1/2}(\Gamma;\mbb R^3)$. The HDG+ method for \eqref{eq:st_pde} is: find $(\bs\sigma_h,\bs u_h,\widehat{\bs u}_h)\in \bs V_h\times\bs W_h\times\bs M_h$ such that
\begin{subequations}\label{eq:st_HDG}
\begin{alignat}{5}
(\mc A\bs\sigma_h,\bs\theta)_{\mc T_h} 
+ (\bs u_h,\div\bs\theta)_{\mc T_h}
-\dualpr{\widehat{\bs u}_h,\bs\theta\bs n}_{\pp\mc T_h}&
=0&\quad& \forall \bs\theta\in\bs V_h,\\
-(\div\bs\sigma_h,\bs w)_{\mc T_h} 
+\dualpr{\bs\tau\mr P_M(\bs u_h-\widehat{\bs u}_h),\bs w}_{\pp\mc T_h}
&=(\bs f,\bs w)_{\mc T_h}&\quad & \forall\bs w\in\bs W_h,\\
\dualpr{
\bs\sigma_h\bs n 
-\bs\tau\mr P_M(\bs u_h-\widehat{\bs u}_h),\bs \mu
}_{\partial\mc T_h\backslash\Gamma}
&=0&\quad & \forall\bs\mu\in\bs M_h,\\
\dualpr{\widehat{\bs u}_h,\bs\mu}_\Gamma &= \dualpr{\bs g,\bs\mu}_{\Gamma}
&\quad & \forall\bs\mu\in\bs M_h.
\end{alignat}
\end{subequations}
Since we will use a duality argument to estimate the convergence of ${\bs u}_h$, here we write down the adjoint equations for \eqref{eq:st_pde}:
\begin{subequations}\label{eq:st_adpde}
\begin{alignat}{5}
\mc A\bs\Psi + \bs\varepsilon(\bs\Phi) &= 0 &\qquad& \mr{in}\ \Omega,\\
\div\bs\Psi &= \bs\Theta && \mr{in}\ \Omega,\\
\gamma\bs\Phi&=0 && \mr{on}\ \Gamma,
\end{alignat}
\end{subequations}
with input data $\bs\Theta\in L^2(\Omega;\mbb R^3)$, and we assume the additional elliptic regularity estimate
\begin{align}\label{eq:ell_reg}
\|\bs\Psi\|_{1,\Omega}+\|\bs\Phi\|_{2,\Omega}\le C_\mr{reg}\|\bs\Theta\|_\Omega,
\end{align}
where $C_\mr{reg}$ is a constant depending only on $\mc A$ and $\Omega$. The following convergence theorem will be proved in Subsection \ref{sec:st_prf}.

\begin{theorem}\label{th:st}
For $k\ge1$ and $1\le m\le k+1$, we have
\begin{align*}
\|\bs\sigma-\bs\sigma_h\|_{\mc A}\le C_1h^m(\|\bs\sigma\|_{m,\Omega}+\|\bs u\|_{m+1,\Omega}).
\end{align*} 
If \eqref{eq:ell_reg} holds, then we also have
\begin{align*}
\|\bs u-\bs u_h\|_\Omega\le C_2 h^{m+1}(\|\bs\sigma\|_{m,\Omega}+\|\mb u\|_{m+1,\Omega}).
\end{align*}
Here, $C_1$ depends only on the polynomial degree $k$ and the shape-regularity of $\mc T_h$, while $C_2$ depends also on $\mc A$ and  $C_\mr{reg}$.
\end{theorem}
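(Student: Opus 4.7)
The plan is to run a projection-based error analysis that treats Theorem \ref{th:PROJ} as a black box. On each $K\in\mc T_h$ introduce $(\bs\sigma_K^\Pi,\bs u_K^\Pi):=\Pi(\bs\sigma,\bs u;\bs\tau)$ with remainder $\bs\delta_K:=\mr{R}(\bs\sigma,\bs u;\bs\tau)$, and define the projected errors
$$
\bs e^\sigma:=\bs\sigma_h-\bs\sigma_K^\Pi,\qquad
\bs e^u:=\bs u_h-\bs u_K^\Pi,\qquad
\bs e^{\widehat u}:=\widehat{\bs u}_h-\mr P_M\bs u.
$$
The boundary-condition line of \eqref{eq:st_HDG} forces $\widehat{\bs u}_h|_\Gamma=\mr P_M\bs g$, so $\bs e^{\widehat u}|_\Gamma=\bs 0$. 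Rewriting the integral identities obeyed by the exact solution against discrete test functions, and using \eqref{eq:tauProp_a}--\eqref{eq:tauProp_c} to substitute $(\bs\sigma_K^\Pi,\bs u_K^\Pi,\mr P_M\bs u)$ for $(\bs\sigma,\bs u,\gamma\bs u)$, subtracting \eqref{eq:st_HDG} yields three error equations with the same bilinear form as \eqref{eq:st_HDG} and right-hand sides $(\mc A(\bs\sigma-\bs\sigma_K^\Pi),\bs\theta)_{\mc T_h}$, $-\dualpr{\bs\delta_K,\bs w}_{\pp\mc T_h}$, and $\dualpr{\bs\delta_K,\bs\mu}_{\pp\mc T_h\backslash\Gamma}$. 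No cross terms between variables survive; this is exactly the reason for cooking up the projection.

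Testing the three error equations with $(\bs e^\sigma,\bs e^u,\bs e^{\widehat u})$ and summing, the $(\bs e^u,\div\bs e^\sigma)$ and $\dualpr{\bs e^\sigma\bs n,\bs e^{\widehat u}}$ contributions telescope to zero, while the stabilization terms collapse (using $\bs\tau\mr P_M\bs\eta\in\bs M_h$ to absorb $\mr P_M$) to $\|\mr P_M\bs e^u-\bs e^{\widehat u}\|_{\bs\tau}^2$. This produces the energy identity
$$
\|\bs e^\sigma\|_{\mc A}^2+\|\mr P_M\bs e^u-\bs e^{\widehat u}\|_{\bs\tau}^2
=(\mc A(\bs\sigma-\bs\sigma_K^\Pi),\bs e^\sigma)_{\mc T_h}-\dualpr{\bs\delta_K,\mr P_M\bs e^u-\bs e^{\widehat u}}_{\pp\mc T_h}.
$$
Cauchy--Schwarz, Young's inequality, the bound $\|\bs\delta_K\|_{\bs\tau^{-1}}\lesssim h_K^{1/2}\|\bs\delta_K\|_{\pp K}$, and \eqref{eq:1.3a} deliver $\|\bs e^\sigma\|_{\mc A}\lesssim h^m(\|\bs\sigma\|_{m,\Omega}+\|\bs u\|_{m+1,\Omega})$; a triangle inequality then closes the first estimate.

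For the displacement estimate I would run an Aubin--Nitsche duality built on the sign-flipped projection of \eqref{eq:1.4}. Take $\bs\Theta=\bs e^u$ in \eqref{eq:st_adpde} and let $(\bs\Psi_K^\Pi,\bs\Phi_K^\Pi):=\Pi(\bs\Psi,\bs\Phi;-\bs\tau)$ with remainder $\bs\delta_K^*$; these satisfy \eqref{eq:tauProp} with $\bs\tau$ replaced by $-\bs\tau$. Starting from $\|\bs e^u\|_\Omega^2=(\div\bs\Psi,\bs e^u)_{\mc T_h}$, split the integral as $(\div(\bs\Psi-\bs\Psi_K^\Pi),\bs e^u)+(\div\bs\Psi_K^\Pi,\bs e^u)$, treating the first piece with the $-\bs\tau$ analog of \eqref{eq:tauProp_b} and the second by testing the first primal error equation with $\bs\theta=\bs\Psi_K^\Pi$. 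Expanding $(\mc A\bs e^\sigma,\bs\Psi_K^\Pi)$ via $\mc A\bs\Psi=-\bs\varepsilon(\bs\Phi)$ and integration by parts produces $(\div\bs e^\sigma,\bs\Phi)$ and $\dualpr{\bs e^\sigma\bs n,\bs\Phi}$, which are eliminated by testing the second and third primal error equations with $\bs\Phi_K^\Pi$ and $\mr P_M\gamma\bs\Phi$ respectively (both admissible; the latter vanishes on $\Gamma$). The sign flip in the adjoint projection exactly matches the sign flip between \eqref{eq:st_pde} and \eqref{eq:st_adpde}, so every $\bs\tau$-weighted pairing between primal and adjoint identities cancels by symmetry of $\bs\tau$. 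What survives is bounded by $\|\bs\sigma-\bs\sigma_h\|_{\mc A}\|\bs\Psi-\bs\Psi_K^\Pi\|_{\mc A}$, by $|(\bs\sigma-\bs\sigma_K^\Pi,\bs\varepsilon(\bs\Phi))_{\mc T_h}|$, and by $\bs\delta_K^*$- and $\bs\delta_K$-type terms. Theorem \ref{th:PROJ} with $m=1$ on the adjoint side together with the elliptic regularity \eqref{eq:ell_reg} supplies an extra factor of $h$ in the first and third groups, while the crucial orthogonality \eqref{eq:rmproj} produces the extra $h$ in the stress--strain term via $(\bs\sigma-\bs\sigma_K^\Pi,\bs\varepsilon(\bs\Phi))=(\bs\sigma-\bs\sigma_K^\Pi,\bs\varepsilon(\bs\Phi)-\bs{\Pi}_0\bs\varepsilon(\bs\Phi))$ against a piecewise-constant symmetric-matrix projection. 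The result is $\|\bs e^u\|_\Omega\lesssim h^{m+1}(\|\bs\sigma\|_{m,\Omega}+\|\bs u\|_{m+1,\Omega})$, and a triangle inequality finishes.

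The main obstacle is the bookkeeping in the duality step: pairing six identities (three primal error equations and three adjoint projection identities) and confirming that every $\bs\tau$-weighted or integration-by-parts cross term cancels by symmetry of $\bs\tau$, and then identifying \eqref{eq:rmproj} as what buys the extra $h$ in the stress--strain pairing. A small piece of auxiliary technology is a trace-plus-inverse inequality that turns $\|\bs\Phi_K^\Pi-\bs\Phi\|_{\pp K}$ into an $h_K^{3/2}$-scaled bound, needed when estimating the residual term $\dualpr{\bs\delta_K,\mr P_M\gamma(\bs\Phi_K^\Pi-\bs\Phi)}$; everything else is routine Cauchy--Schwarz.
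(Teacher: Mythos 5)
Your proposal is correct and follows essentially the same route as the paper: the same projection-based error equations, the same energy identity yielding the stress bound, and the same duality argument with the sign-flipped adjoint projection, using the $m=1$ adjoint estimates, the orthogonality \eqref{eq:rmproj} for the stress--strain pairing, and the regularity \eqref{eq:ell_reg}. The only cosmetic differences are your opposite sign convention for the errors and starting the duality from $(\mathrm{div}\,\bs\Psi,\bs e^u)_{\mc T_h}$ rather than cross-testing the two sets of equations directly, which amounts to the same computation.
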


Note that in \cite[Theorem 2.1]{QiShSh:2018} the meshes $\mc T_h$ are assumed to be quasi-uniform, whereas  here we only require $\mc T_h$ to be shape-regular. 

\subsection{Proof of Theorem \ref{th:st}}\label{sec:st_prf}

To begin with, we use the element-by-element projection defined in \eqref{eq:5.3} (we will only use Theorem \ref{th:PROJ}, not the definition) on the solution $(\bs\sigma,\bs u)$:
\begin{align*}
(\Pi\bs\sigma,\Pi\bs u)
:=\prod_{K\in\mc T_h}\Pi(\bs\sigma\big|_K,\bs u\big|_K;\bs\tau\big|_{\pp K}),\quad
\bs\delta 
:= \prod_{K\in\mc T_h}\mathrm R(\bs\sigma\big|_K,\bs u\big|_K;\bs\tau\big|_{\pp K}),
\end{align*}
and define the error terms and the approximation terms:
\begin{alignat*}{5}
\bs\varepsilon_h^u&:=\Pi\bs u-\bs u_h,&\quad
\bs\varepsilon_h^\sigma&:=\Pi\bs\sigma-\bs\sigma_h,\quad
\bs\varepsilon_h^{\widehat{u}}:=\mr P_M\bs u-\widehat{\bs u}_h,\\
\bs e_\sigma&:=\Pi\bs\sigma-\bs\sigma,&
\bs e_u&:=\Pi\bs u-\bs u.
\end{alignat*}
We aim to control the terms $\bs\varepsilon_h^*$ ($*=\bs u,\bs\sigma,\widehat{\bs u}$) by the terms $\bs e_*$ ($*=\bs\sigma,\bs u$) and $\bs\delta$.

\begin{proposition}[Energy estimate]
\label{prop:energy_id}
The following energy identity holds
\begin{align}\label{eq:st_id}
(\mc A\bs\varepsilon_h^\sigma,\bs\varepsilon_h^\sigma)_{\mc T_h} 
+\dualpr{\bs\tau \mr P_M(\bs\varepsilon_h^u-\bs\varepsilon_h^{\widehat{u}})
,(\bs\varepsilon_h^u-\bs\varepsilon_h^{\widehat{u}})}_{\pp\mc T_h}
=(\mc A\bs e_\sigma,\bs\varepsilon_h^\sigma)_{\mc T_h}+\dualpr{\bs\delta,\bs\varepsilon_h^u-\bs\varepsilon_h^{\widehat{u}}}_{\pp\mc T_h}.
\end{align}
Consequently,
\begin{align}\label{eq:st_est}
\|\bs\varepsilon_h^\sigma\|_{\mc A}^2+\|\mathrm P_M(\bs\varepsilon_h^u-\bs\varepsilon_h^{\widehat u})\|_{\bs\tau}^2
\le 
\|\bs e_\sigma\|_{\mc A}^2+\|\bs\delta\|_{\bs\tau^{-1}}^2.
\end{align}
\end{proposition}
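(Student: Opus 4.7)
The approach is the standard projection-based HDG error analysis: I would first derive error equations for $(\bs\varepsilon_h^\sigma,\bs\varepsilon_h^u,\bs\varepsilon_h^{\widehat u})$ by feeding the projected exact solution $(\Pi\bs\sigma,\Pi\bs u,\mr P_M\bs u)$ into the left-hand side of \eqref{eq:st_HDG}, subtracting the discrete equations, and then testing the resulting system with the errors themselves.

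Concretely, I would plug $(\Pi\bs\sigma,\Pi\bs u,\mr P_M\bs u)$ into the three bulk equations of \eqref{eq:st_HDG} and compare to the elementwise integrated-by-parts exact identity
\[
(\mc A\bs\sigma,\bs\theta)_K + (\bs u,\div\bs\theta)_K - \dualpr{\bs u,\bs\theta\bs n}_{\pp K} = 0,
\]
which follows from $\bs\varepsilon(\bs u) = \mc A\bs\sigma$. The first equation generates two residuals that vanish by the projection properties: $(\bs e_u,\div\bs\theta)_K = 0$ because $\div\bs\theta\in\mc P_{k-1}(K;\mbb R^3)$ (using \eqref{eq:tauProp_a}), and $\dualpr{\bs u-\mr P_M\bs u,\bs\theta\bs n}_{\pp K} = 0$ because $\bs\theta\bs n\in\mc R_k(\pp K;\mbb R^3)$, leaving only $(\mc A\bs e_\sigma,\bs\theta)_{\mc T_h}$ on the right. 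For the second and third equations, the projection identities \eqref{eq:tauProp_b} and \eqref{eq:tauProp_c} directly produce $\bs\delta$ on the right after subtraction, once one recognizes that $\mr P_M(\Pi\bs u-\bs u-\bs u_h+\widehat{\bs u}_h) = \mr P_M(\bs\varepsilon_h^u-\bs\varepsilon_h^{\widehat u})$ thanks to $\mr P_M\widehat{\bs u}_h = \widehat{\bs u}_h$ and idempotency of $\mr P_M$. The fourth HDG+ equation in turn gives $\bs\varepsilon_h^{\widehat u}|_\Gamma = \bs 0$.

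Next I would test the three error equations respectively with $\bs\theta = \bs\varepsilon_h^\sigma$, $\bs w = \bs\varepsilon_h^u$, and $\bs\mu = -\bs\varepsilon_h^{\widehat u}$ (the last admissible since $\bs\varepsilon_h^{\widehat u}$ vanishes on $\Gamma$), and sum. The terms $\pm(\div\bs\varepsilon_h^\sigma,\bs\varepsilon_h^u)_{\mc T_h}$ cancel, and the boundary pairings $\pm\dualpr{\bs\varepsilon_h^\sigma\bs n,\bs\varepsilon_h^{\widehat u}}_{\pp\mc T_h}$ cancel after extending from $\pp\mc T_h\setminus\Gamma$ to $\pp\mc T_h$ using $\bs\varepsilon_h^{\widehat u}|_\Gamma = \bs 0$. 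The surviving stabilization term takes the symmetric form of \eqref{eq:st_id} by using that $\bs\tau\mr P_M(\cdot)\in\bs M_h$ together with self-adjointness of $\mr P_M$. Finally, \eqref{eq:st_est} follows by Cauchy--Schwarz on the two right-hand side terms with the weights $\mc A$ and $\bs\tau$/$\bs\tau^{-1}$ (inserting $\mr P_M$ against $\bs\varepsilon_h^u-\bs\varepsilon_h^{\widehat u}$ in the second term, which is free because $\bs\delta\in\bs M_h$) and by Young's inequality to absorb the coercive left-hand side contributions.

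The main obstacle is not analytic but combinatorial bookkeeping: one must carefully manage the interplay among $\mr P_M$, $\bs\tau$, the discrete trace space $\bs M_h$, and the distinction between pairings on $\pp\mc T_h$ and $\pp\mc T_h\setminus\Gamma$, inserting or removing $\mr P_M$ between factors exactly where needed to expose the cancellations that produce the clean symmetric left-hand side of \eqref{eq:st_id}.
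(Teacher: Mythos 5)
Your proposal is correct and follows essentially the same route as the paper: derive the error equations from the projection identities \eqref{eq:tauProp} and Galerkin orthogonality, test with the errors themselves so that the $\div$ and normal-traction pairings cancel (using $\bs\varepsilon_h^{\widehat u}|_\Gamma=\bs 0$ to pass from $\pp\mc T_h\setminus\Gamma$ to $\pp\mc T_h$), and finish with weighted Cauchy--Schwarz and Young. The only cosmetic difference is your sign choice $\bs\mu=-\bs\varepsilon_h^{\widehat u}$, which is consistent provided your third error equation keeps the leading minus sign of \eqref{eq:tauProp_c}; the paper writes that equation with the opposite sign and tests with $+\bs\varepsilon_h^{\widehat u}$.
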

\begin{proof}
The proof here will be similar to the proof of \cite[Lemma 3.1 \& 3.2]{CoGoSa:2010}. By Theorem \ref{th:PROJ}, we first obtain a set of projection equations satisfied by $\Pi\bs\sigma$, $\Pi\bs u$ and $\bs\delta$. We then subtract \eqref{eq:st_HDG} from the projection equations and obtain the following error equations:
\begin{subequations}
\label{eq:st_ereq}
\begin{align}
\label{eq:st_ereq_1}
(\mc A\bs\varepsilon_h^\sigma,\bs\theta)_{\mc T_h}
+ (\bs\varepsilon_h^u,\div\bs\theta)_{\mc T_h}
-\dualpr{\bs\varepsilon_h^{\widehat{u}},\bs\theta\bs n}_{\pp\mc T_h}
&=(\mc A\bs e_\sigma,\bs\theta)_{\mc T_h},\\
\label{eq:st_ereq_2}
-(\div\bs\varepsilon_h^\sigma,\bs w) _{\mc T_h}
+ \dualpr{\bs\tau \mr P_M(\bs\varepsilon_h^u-\bs\varepsilon_h^{\widehat{u}}),\bs w}_{\pp\mc T_h}
&=\dualpr{\bs\delta,\bs w}_{\pp\mc T_h},\\
\label{eq:st_ereq_3}
\dualpr{\bs\varepsilon_h^\sigma\bs n 
-\bs\tau \mr P_M(\bs\varepsilon_h^u-\bs\varepsilon_h^{\widehat{u}}),\bs\mu}
_{\partial\mc T_h\backslash\Gamma}
&=-\dualpr{\bs\delta,\bs\mu}_{\partial\mc T_h\backslash\Gamma},\\
\label{eq:st_ereq_4}
\dualpr{\bs\varepsilon_h^{\widehat{u}},\bs\mu}_\Gamma &= 0.
\end{align}
\end{subequations}
By \eqref{eq:st_ereq_3} and \eqref{eq:st_ereq_4} we have
\begin{align}\label{eq:st_ereq_3a}
\dualpr{\bs\varepsilon_h^\sigma\bs n 
-\bs\tau \mr P_M(\bs\varepsilon_h^u-\bs\varepsilon_h^{\widehat{u}}),
\bs\varepsilon_h^{\widehat{u}}}
_{\partial\mc T_h}
&=-\dualpr{\bs\delta,
\bs\varepsilon_h^{\widehat{u}}}_{\partial\mc T_h}.
\end{align}
Now taking $\bs\theta = \bs\varepsilon_h^\sigma$ in \eqref{eq:st_ereq_1}, $\bs w=\bs\varepsilon_h^u$ in \eqref{eq:st_ereq_2}, and then adding \eqref{eq:st_ereq_1}, \eqref{eq:st_ereq_2} and \eqref{eq:st_ereq_3a},
we obtain the energy identity \eqref{eq:st_id}, from which the estimate \eqref{eq:st_est} follows easily.
\end{proof}

\begin{proposition}[Estimate by duality]\label{prop:st_dual}
If $k\ge1$ and \eqref{eq:ell_reg} holds, then
\begin{align*}
\|\bs\varepsilon_h^u\|_{\mc T_h}\le hC
\left(
\|\bs e_\sigma\|_{\mc T_h}+\|\bs\delta\|_{\bs\tau^{-1}}
\right).
\end{align*}
Here, $C$ is independent of $h$, but depends on $\mc A$ and $C_\mr{reg}$.
\end{proposition}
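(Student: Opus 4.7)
The plan is to carry out an Aubin--Nitsche duality argument tailored to HDG+ through the adjoint projection \eqref{eq:1.4}. Take $\bs\Theta := \bs\varepsilon_h^u$ in the adjoint system \eqref{eq:st_adpde}, use \eqref{eq:ell_reg} to get $\|\bs\Psi\|_{1,\Omega} + \|\bs\Phi\|_{2,\Omega} \le C_\mr{reg}\|\bs\varepsilon_h^u\|_{\mc T_h}$, and apply the adjoint projection $(\Pi^*\bs\Psi,\Pi^*\bs\Phi) := \Pi(\bs\Psi,\bs\Phi;-\bs\tau)$ with remainder $\bs\delta^*$. The essential feature of the sign change $\bs\tau\to-\bs\tau$ is that $\Pi^*$ satisfies \eqref{eq:tauProp_b}--\eqref{eq:tauProp_c} with $\bs\tau$ replaced by $-\bs\tau$ but the same $\bs\delta^*$, which will force a cancellation below. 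Theorem \ref{th:PROJ} with $m=1$ combined with elliptic regularity yields
\[
\|\Pi^*\bs\Psi-\bs\Psi\|_{\mc T_h} + h^{-1}\|\Pi^*\bs\Phi-\bs\Phi\|_{\mc T_h} + \|\bs\delta^*\|_{\bs\tau^{-1}} \lesssim h\,\|\bs\varepsilon_h^u\|_{\mc T_h}.
\]

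Starting from $\|\bs\varepsilon_h^u\|_{\mc T_h}^2 = (\bs\varepsilon_h^u,\div\bs\Psi)_{\mc T_h}$, I would perform the following substitutions: apply \eqref{eq:tauProp_b} for $\Pi^*$ with test $\bs w=\bs\varepsilon_h^u\in\bs W_h$ to trade $\div\bs\Psi\to\div\Pi^*\bs\Psi$ up to boundary remainders; test \eqref{eq:st_ereq_1} with $\bs\theta=\Pi^*\bs\Psi\in\bs V_h$ to rewrite $(\bs\varepsilon_h^u,\div\Pi^*\bs\Psi)_{\mc T_h}$; use $\mc A\bs\Psi=-\bs\varepsilon(\bs\Phi)$ and element-wise integration by parts to convert $-(\mc A\bs\varepsilon_h^\sigma,\bs\Psi)_{\mc T_h}$ into $-(\div\bs\varepsilon_h^\sigma,\bs\Phi)_{\mc T_h}+\dualpr{\bs\varepsilon_h^\sigma\bs n,\gamma\bs\Phi}_{\pp\mc T_h}$; simplify the volume piece via \eqref{eq:tauProp_a} for $\Pi^*$ (since $\div\bs\varepsilon_h^\sigma\in\mc P_{k-1}$) and \eqref{eq:st_ereq_2} with $\bs w=\Pi^*\bs\Phi$; simplify the boundary piece via \eqref{eq:st_ereq_3} tested with $\mr P_M\gamma\bs\Phi\in\bs M_h$ (single-valued since $\gamma\bs\Phi=0$ on $\Gamma$); and rewrite the lingering $\dualpr{\bs\varepsilon_h^{\widehat u},\Pi^*\bs\Psi\bs n}_{\pp\mc T_h}$ using \eqref{eq:tauProp_c} for $\Pi^*$ with $\bs\mu=\bs\varepsilon_h^{\widehat u}\in\bs M_h$, after subtracting $\dualpr{\bs\varepsilon_h^{\widehat u},\bs\Psi\bs n}_{\pp\mc T_h}=0$ (single-valuedness of $\bs\varepsilon_h^{\widehat u}$ and $\bs\Psi\in H(\div)$, together with \eqref{eq:st_ereq_4}). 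The two $\dualpr{\bs\tau\mr P_M(\Pi^*\bs\Phi-\bs\Phi),\cdot}_{\pp\mc T_h}$ contributions then cancel exactly by the symmetry of $\bs\tau$, leaving
\begin{align*}
\|\bs\varepsilon_h^u\|_{\mc T_h}^2 ={}& (\mc A\bs e_\sigma,\Pi^*\bs\Psi)_{\mc T_h} - (\mc A\bs\varepsilon_h^\sigma,\Pi^*\bs\Psi-\bs\Psi)_{\mc T_h}\\
&- \dualpr{\bs\delta,\mr P_M\gamma(\bs\Phi-\Pi^*\bs\Phi)}_{\pp\mc T_h} + \dualpr{\bs\delta^*,\mr P_M(\bs\varepsilon_h^u-\bs\varepsilon_h^{\widehat u})}_{\pp\mc T_h}.
\end{align*}

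Each of the four resulting terms is bounded by Cauchy--Schwarz. For the first, split $(\mc A\bs e_\sigma,\Pi^*\bs\Psi)_{\mc T_h} = (\mc A\bs e_\sigma,\Pi^*\bs\Psi-\bs\Psi)_{\mc T_h} - (\bs e_\sigma,\bs\varepsilon(\bs\Phi))_{\mc T_h}$; here the orthogonality \eqref{eq:rmproj} (saying $\bs e_\sigma\perp\mc P_0(K;\Rsym)$ on each element) lets me subtract the element mean of $\bs\varepsilon(\bs\Phi)$ and gain an $h_K$ by Poincar\'e. The second term is immediate from the projection estimate above. The third is handled by a trace inequality together with the $L^2$-bound of $\bs\Phi-\Pi^*\bs\Phi$ from Theorem \ref{th:PROJ} and a matching $H^1$-bound obtained by Bramble--Hilbert, using $H^1$-stability of $\Pi^*$ (automatic since its image is a polynomial of fixed degree, via inverse inequality). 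The fourth combines $\|\bs\delta^*\|_{\bs\tau^{-1}}\lesssim h\|\bs\varepsilon_h^u\|_{\mc T_h}$ with the jump bound from \eqref{eq:st_est}. Dividing by $\|\bs\varepsilon_h^u\|_{\mc T_h}$ and using \eqref{eq:st_est} to absorb $\|\bs\varepsilon_h^\sigma\|_{\mc T_h}$ and $\|\mr P_M(\bs\varepsilon_h^u-\bs\varepsilon_h^{\widehat u})\|_{\bs\tau}$ into $\|\bs e_\sigma\|_{\mc T_h}+\|\bs\delta\|_{\bs\tau^{-1}}$ closes the proof. The main obstacle is the bookkeeping in deriving the identity: the test functions must be chosen so that the $\bs\tau$-stabilization pieces cancel via the sign-flipped adjoint projection and the symmetry of $\bs\tau$, and one must recognize that \eqref{eq:rmproj} is precisely what supplies the extra $h$-factor on the $(\mc A\bs e_\sigma,\bs\Psi)$ piece.
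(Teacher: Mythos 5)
Your argument is correct and is essentially the paper's own proof: the same adjoint projection $\Pi(\cdot,\cdot;-\bs\tau)$ with its remainder, the same four-term duality identity (yours coincides term by term with the one in the paper after rearrangement), the same use of \eqref{eq:rmproj} to gain the extra factor of $h$ on the $(\bs e_\sigma,\bs\varepsilon(\bs\Phi))$ piece, and the same final absorption of $\|\bs\varepsilon_h^\sigma\|$ and $\|\mr P_M(\bs\varepsilon_h^u-\bs\varepsilon_h^{\widehat u})\|_{\bs\tau}$ via the energy estimate \eqref{eq:st_est}. The only difference is presentational: you derive the identity by successive substitutions starting from $(\bs\varepsilon_h^u,\div\bs\Psi)_{\mc T_h}$, while the paper tests the projection equations against the errors and the error equations against the projections and compares.
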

\begin{proof}
The proof here will be similar to the proof of \cite[Lemma 4.1 \& Theorem 4.1]{CoGoSa:2010}.
Consider the adjoint equations \eqref{eq:st_adpde}. 
We take $\bs\Theta=\bs\varepsilon_h^u$ as the input data and apply the projection on $(\bs\Psi,\bs\Phi)$ with $-\bs\tau$ as the stabilization function. Namely,
\begin{align*}
(\Pi\bs\Psi,\Pi\bs\Phi):=
\prod_{K\in\mc T_h}\Pi(\bs\Psi\big|_K,\bs\Phi\big|_K;-\bs\tau\big|_{\pp K}),\quad
\bs\Delta 
:= \prod_{K\in\mc T_h}\mathrm R(\bs\Psi\big|_K,\bs\Phi\big|_K;-\bs\tau\big|_{\pp K}).
\end{align*}
By Theorem \ref{th:PROJ}, $\Pi\bs\Psi$, $\Pi\bs\Phi$ and $\bs\Delta$ satisfy
\begin{align*}
(\mc A\Pi\bs\Psi,\bs\theta)_{\mc T_h} - (\Pi\bs\Phi,\div\bs\theta)_{\mc T_h}
+\dualpr{\mr P_M\bs\Phi,\bs\theta\bs n}_{\pp\mc T_h}
&=(\mc A(\Pi\bs\Psi-\bs\Psi),\bs\theta)_{\mc T_h},\\
(\div\Pi\bs\Psi,\bs w)_{\mc T_h} + \dualpr{\bs\tau \mr P_M(\Pi\bs\Phi-\bs\Phi),\bs w}_{\pp\mc T_h}
&=(\bs\varepsilon_h^u,\bs w)_{\mc T_h}-\dualpr{\bs\Delta,\bs w}_{\pp\mc T_h},\\
-\dualpr{\Pi\bs\Psi\bs n +\bs\tau \mr P_M(\Pi\bs\Phi-\bs\Phi),\bs\mu}
_{\partial\mc T_h\backslash\Gamma}
&=\dualpr{\bs\Delta,\bs\mu}_{\partial\mc T_h\backslash\Gamma},\\
\dualpr{\mr P_M \bs\Phi,\bs\mu}_\Gamma &= 0.
\end{align*}
We now test the above equations with 
$\bs\theta=\bs\varepsilon_h^\sigma$, $\bs w=\bs\varepsilon_h^u$ and $\bs\mu=\bs\varepsilon_h^{\widehat{u}}$, then test the error equations \eqref{eq:st_ereq} with $\bs\theta=\Pi\bs\Psi$, $\bs w=\Pi\bs\Phi$ and $\bs\mu=\mr P_M\bs\Phi$. By comparing the two sets of equations, we obtain
\begin{align*}
(\mc A(\Pi\bs\Psi-\bs\Psi),\bs\varepsilon_h^\sigma)_{\mc T_h}
+\|\bs\varepsilon_h^u\|_{\mc T_h}^2
-\dualpr{\bs\Delta,\mr P_M\bs\varepsilon_h^u
	-\bs\varepsilon_h^{\widehat{u}}}_{\pp\mc T_h}
=(\mc A\bs e_\sigma,\Pi\bs\Psi)_{\mc T_h}
+\dualpr{\bs\delta,\mr P_M(\Pi\bs\Phi-\bs\Phi)}_{\pp\mc T_h}.
\end{align*}
After rearranging terms, we have
\begin{align*}
\|\bs\varepsilon_h^u\|_{\mc T_h}^2
&=-(\mc A(\Pi\bs\Psi-\bs\Psi),\bs\sigma-\bs\sigma_h)_{\mc T_h}-(\Pi\bs\sigma-\bs\sigma,
	\bs\varepsilon(\bs\Phi))_{\mc T_h}\\
&\quad\, +\dualpr{\bs\Delta,\mr P_M\bs\varepsilon_h^u-\bs\varepsilon_h^{\widehat{u}}}_{\pp\mc T_h}
+\dualpr{\bs\delta,\mr P_M(\Pi\bs\Phi-\bs\Phi)}_{\pp\mc T_h}.
\end{align*}
By Theorem \ref{th:PROJ} (with $m=1$) we have
\begin{align*}
\|\Pi\bs\Psi-\bs\Psi\|_{\mc T_h}+\|\bs\Delta\|_{\bs\tau^{-1}}
\lesssim h(\|\bs\Psi\|_{1,\Omega} + \|\bs\Phi\|_{2,\Omega}).
\end{align*}
By \eqref{eq:rmproj} we have
\begin{align*}
(\Pi\bs\sigma-\bs\sigma,
	\bs\varepsilon(\bs\Phi))_{\mc T_h}
	=(\Pi\bs\sigma-\bs\sigma,
	\bs\varepsilon(\bs\Phi)-\mr P_0\bs\varepsilon(\bs\Phi))_{\mc T_h}
\lesssim h\|\bs e_\sigma\|_{\mc T_h}\|\bs\Phi\|_{2,\Omega}.
\end{align*}
Finally, \eqref{eq:tauhm1} implies
\begin{align*}
\|\mr P_M(\Pi\bs\Phi-\bs\Phi)\|_{\bs\tau}\approx \left(\sum_{K\in\mc T_h}\|h_K^{-1/2}\mr P_M(\Pi\bs\Phi-\bs\Phi)\|_{\partial K}^2\right)^{1/2}
\lesssim h(\|\bs\Psi\|_{1,\Omega} + \|\bs\Phi\|_{2,\Omega}).
\end{align*}
We next use  \eqref{eq:ell_reg} to control the term $(\|\bs\Psi\|_{1,\Omega} + \|\bs\Phi\|_{2,\Omega})$ and use Proposition \ref{prop:energy_id} to control the terms $\|\mc A(\bs\sigma-\bs\sigma_h)\|_{\mc T_h}$ and $\|\mr P_M\bs\varepsilon_h^u-\bs\varepsilon_h^{\widehat{u}}\|_{\bs\tau}$. The proof is thus completed.
\end{proof}

Combing Proposition \ref{prop:energy_id}, Proposition \ref{prop:st_dual} and Theorem \ref{th:PROJ}, Theorem \ref{th:st} follows readily.

\section{Frequency-domain elastodynamics}\label{sec:freq}
\subsection{Method and convergence estimates}
In this section we give new proofs of error estimates to the second HDG+ method studied in \cite{HuPrSa:2017} by using projection based analysis
for the following time-harmonic linear elasticity problem 
\begin{subequations}\label{eq:fq_pde}
\begin{alignat}{5}
\mc A\bs\sigma - \bs\varepsilon(\bs u) &=\bs 0 &\qquad& \mr{in}\ \Omega,\\
-\div\bs\sigma - \kappa^2\rho\bs u&= \bs f && \mr{in}\ \Omega,\\
\gamma\bs u&= \bs g && \mr{on}\ \Gamma:=\pp\Omega,
\end{alignat}
\end{subequations}
where $\bs f\in L^2(\Omega;\mbb C^3)$, $\bs g\in H^{1/2}(\Gamma;\mbb C^3)$, and $\rho\in W^{1,\infty}(\Omega;\mbb R)$ denotes the density function. We assume that the wave number $\kappa>0$ and $\kappa^2$ is not a Dirichlet eigenvalue so that \eqref{eq:fq_pde} is well-posed. Note that because of the different proof techniques, the dependence on the wave numbers of the error estimates will be different and not easy to compare. The emphasis here will be the simplified error analysis thanks to the introduction of the projection.	
 
The HDG+ method for \eqref{eq:fq_pde} is: find $(\bs\sigma_h,\bs u_h,\widehat{\bs u}_h)\in \bs V_h\times\bs W_h\times\bs M_h$ such that
\begin{subequations}\label{eq:fq_hdg}
\begin{align}
(\mc A\bs\sigma_h,\bs\theta)_{\mc T_h} 
+ (\bs u_h,\div\bs\theta)_{\mc T_h}
-\dualpr{\widehat{\bs u}_h,\bs\theta\bs n}_{\pp\mc T_h}
&=0,\\
-(\div\bs\sigma_h,\bs w)_{\mc T_h} 
+ \dualpr{\bs\tau \mr P_M(\bs u_h-\widehat{\bs u}_h),\bs w}_{\pp\mc T_h}
-(\kappa^2\rho \bs u_h,\bs w)_{\mc T_h}
&=(\bs f,\bs w)_{\mc T_h},\\
\dualpr{\bs\sigma_h\bs n -\bs\tau \mr P_M(\bs u_h-\widehat{\bs u}_h),\bs\mu}
_{\partial\mc T_h\backslash\Gamma}
&=0,\\
\dualpr{\widehat{\bs u}_h,\bs\mu}_\Gamma &= \dualpr{\bs g,\bs\mu}_{\Gamma},
\end{align}
\end{subequations}
for all $\bs\theta\in\bs V_h$, $\bs w\in\bs W_h$ and $\bs\mu\in\bs M_h$. Here, the spaces $\bs V_h$, $\bs W_h$ and $\bs M_h$ are the same as those defined in Section \ref{sec:st_main} except now the functions contained in these spaces take complex values.

For $\bs\Theta\in L^2(\Omega;\mbb R^3)$,
we assume that the solution to the adjoint equations for \eqref{eq:fq_pde} 
\begin{subequations}\label{eq:fq_adpde}
\begin{alignat}{5}
\mc A\bs\Psi+ \bs\varepsilon(\bs\Phi) &=\bs 0 &\qquad & \mr{in}\ \Omega,\\
\div\bs\Psi - \kappa^2\rho\bs\Phi&= \bs\Theta && \mr{in}\ \Omega,\\
\gamma\bs\Phi&= \bs 0 &&\mr{on}\ \Gamma,
\end{alignat}
\end{subequations}
satisfy
\begin{align}\label{eq:fq_reg}
\|\bs\Phi\|_{2,\Omega} + \|\bs\Psi\|_{1,\Omega}
\le C_\kappa\|\bs\Theta\|_\Omega,
\end{align}
where we make the dependence on $\kappa$ explicit for the constant $C_\kappa$. For the rest of this section, the wiggles sign $\lesssim$ will hide constants independent of $h$ and $\kappa$.
We aim to prove the following theorem.

\begin{theorem}\label{th:fq}
Suppose $k\ge1$ and \eqref{eq:fq_reg} holds. If $(h^2\kappa^2+h\kappa)C_\kappa$ is small enough, then
\begin{align*}
\|\bs u-\bs u_h\|_{\Omega}&\lesssim
\left(
	(1+C_\kappa)h^{m+1} + C_\kappa(\kappa^2+\kappa) h^{m+2}
\right)
\left(
	\|\bs\sigma\|_{m,\Omega}+\|\bs u\|_{m+1,\Omega}
\right),\\
\|\bs\sigma-\bs\sigma_h\|_{\Omega}&\lesssim
\left(
	h^m+(1+C_\kappa)\kappa h^{m+1} + C_\kappa(\kappa^3+\kappa^2)h^{m+2}
\right)
\left(
	\|\bs\sigma\|_{m,\Omega}+\|\bs u\|_{m+1,\Omega}
\right),
\end{align*}
with $1\le m\le k+1$. 
\end{theorem}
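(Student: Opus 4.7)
The plan is to mimic the two-step procedure (energy identity plus duality) used in Section \ref{sec:steady_state}, adjusting for the fact that the reaction term $-\kappa^2\rho\bs u$ renders the underlying bilinear form indefinite. First I would introduce the projection errors
\[
\bs\varepsilon_h^u:=\Pi\bs u-\bs u_h,\qquad
\bs\varepsilon_h^\sigma:=\Pi\bs\sigma-\bs\sigma_h,\qquad
\bs\varepsilon_h^{\widehat u}:=\mr P_M\bs u-\widehat{\bs u}_h,
\]
together with $\bs e_\sigma:=\Pi\bs\sigma-\bs\sigma$, $\bs e_u:=\Pi\bs u-\bs u$ and the boundary remainder $\bs\delta$ obtained from the element-by-element application of Theorem \ref{th:PROJ} with stabilizer $\bs\tau$. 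Subtracting \eqref{eq:fq_hdg} from the projection identities \eqref{eq:tauProp} produces error equations identical to \eqref{eq:st_ereq} except that the momentum equation carries an extra term $-(\kappa^2\rho\bs\varepsilon_h^u,\bs w)_{\mc T_h}$ on the left and a mirror contribution $-(\kappa^2\rho\bs e_u,\bs w)_{\mc T_h}$ on the right.

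Next I would derive the complex-valued analogue of Proposition \ref{prop:energy_id}. Testing with the conjugates of $\bs\varepsilon_h^\sigma$, $\bs\varepsilon_h^u$, and $\bs\varepsilon_h^{\widehat u}$ and taking real parts yields
\[
\|\bs\varepsilon_h^\sigma\|_{\mc A}^2+\|\mr P_M(\bs\varepsilon_h^u-\bs\varepsilon_h^{\widehat u})\|_{\bs\tau}^2-\|\bs\varepsilon_h^u\|_{\kappa^2\rho}^2
=\mathrm{Re}\bigl[(\mc A\bs e_\sigma,\bs\varepsilon_h^\sigma)_{\mc T_h}+\dualpr{\bs\delta,\bs\varepsilon_h^u-\bs\varepsilon_h^{\widehat u}}_{\pp\mc T_h}-(\kappa^2\rho\bs e_u,\bs\varepsilon_h^u)_{\mc T_h}\bigr].
\]
This identity cannot be closed by itself, so I would run a duality argument paralleling Proposition \ref{prop:st_dual}: feed the adjoint problem \eqref{eq:fq_adpde} with $\bs\Theta=\bs\varepsilon_h^u$, apply the adjoint projection $\Pi(\cdot,\cdot;-\bs\tau)$ from \eqref{eq:1.4} to $(\bs\Psi,\bs\Phi)$, and cross-test the resulting projection equations against $(\bs\varepsilon_h^\sigma,\bs\varepsilon_h^u,\bs\varepsilon_h^{\widehat u})$ while testing the error equations against $(\Pi\bs\Psi,\Pi\bs\Phi,\mr P_M\bs\Phi)$. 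Using \eqref{eq:rmproj} to absorb the $\bs\varepsilon(\bs\Phi)$ pairing, the regularity estimate \eqref{eq:fq_reg}, and Theorem \ref{th:PROJ} with $m=1$, the duality would produce a bound schematically of the form
\[
\|\bs\varepsilon_h^u\|_{\mc T_h}\lesssim hC_\kappa\bigl(\|\bs e_\sigma\|_{\mc T_h}+\|\bs\delta\|_{\bs\tau^{-1}}+\|\bs\varepsilon_h^\sigma\|_{\mc T_h}+(\kappa^2+\kappa)\|\bs e_u\|_{\mc T_h}\bigr),
\]
where each invocation of the adjoint reaction term injects one power of $\kappa$, explaining the powers of $\kappa$ in the statement of the theorem.

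The final step is to combine the two. Substituting the duality bound into the $\|\bs\varepsilon_h^u\|_{\kappa^2\rho}^2$ term in the energy identity and applying Young's inequality, the offending contribution $(h\kappa C_\kappa)^2\|\bs\varepsilon_h^\sigma\|_{\mc T_h}^2$ can be absorbed into the coercive stress part on the left provided $(h^2\kappa^2+h\kappa)C_\kappa$ is sufficiently small. This yields the stress estimate, and re-substituting into the duality bound gives the $L^2$ estimate on the displacement. The claimed estimates on $\bs\sigma-\bs\sigma_h$ and $\bs u-\bs u_h$ then follow by triangle inequality together with the projection estimates \eqref{eq:1.3a}. The main obstacle is precisely this absorption step: the smallness condition $(h^2\kappa^2+h\kappa)C_\kappa\ll 1$ is intrinsic to the method of proof and cannot be dispensed with, since without it the indefinite energy identity cannot be tamed by the duality estimate, a situation typical of frequency-domain wave analyses.
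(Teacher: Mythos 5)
Your proposal is correct and follows essentially the same route as the paper: a G\aa rding-type energy identity (Proposition \ref{prop:fq_id}), a duality/bootstrapping argument using the adjoint projection $\Pi(\cdot,\cdot;-\bs\tau)$ together with \eqref{eq:rmproj} and \eqref{eq:fq_reg} (Proposition \ref{prop:fq_est}), absorption under the smallness condition $(h^2\kappa^2+h\kappa)C_\kappa\ll 1$, and a final triangle inequality with \eqref{eq:1.3a}. The only (immaterial) difference is the order of the final coupling: the paper substitutes the energy bound into the duality estimate and absorbs $\|\bs\varepsilon_h^u\|_{\mc T_h}$ there, obtaining the displacement bound first and then feeding it back into the energy estimate for the stress, whereas you absorb in the energy identity first; both yield the same condition and the same rates.
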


\subsection{Proof of Theorem \ref{th:fq}}
Since the solution $(\bs\sigma,\bs u)$ in \eqref{eq:fq_pde} can take complex values, applying directly Theorem \ref{th:PROJ} is not feasible. However, this can be easily fixed by defining a new complex projection based on the original one: for $(\bs\sigma,\bs u)\in H^1(K;\mbb C_\mr{sym}^{3\times3})\times H^1(K;\mbb C^3)$, we define
\begin{align}\label{def:com_proj}
\Pi(\bs\sigma,\bs u):=\Pi(\mr{Re}\,\bs\sigma,\mr{Re}\,\bs u)+i\Pi(\mr{Im}\,\bs\sigma,\mr{Im}\,\bs u).
\end{align}
It is easy to show that this complex projection also satisfies \eqref{eq:tauProp} and \eqref{eq:1.3a} (the only difference is that now the test functions $\bs\theta,\bs w,\bs\mu$ in \eqref{eq:tauProp} can take complex values). 

Similar to the beginning of Section \ref{sec:st_prf}, we first define the projections $(\Pi\bs\sigma,\Pi\bs u)$, the remainder term $\bs\delta$, the error $\bs\varepsilon_h^u$, $\bs\varepsilon_h^\sigma$, $\bs\varepsilon^{\widehat{u}}$, and approximation terms $\bs e_\sigma$, $\bs e_u$.

\begin{proposition}[G\aa rding-type identity]\label{prop:fq_id}
The following energy identity holds
\begin{align*}
\|\bs\varepsilon_h^\sigma\|_{\mc A}^2
+\|\mr P_M(\bs\varepsilon_h^u-\bs\varepsilon_h^{\widehat{u}})\|_{\bs\tau}^2
-\kappa^2\|\bs\varepsilon_h^u\|_\rho^2
&=(\mc A\overline{\bs e_\sigma},\bs\varepsilon_h^\sigma)_{\mc T_h}
+\dualpr{\overline{\bs\delta},\bs\varepsilon_h^u-\bs\varepsilon_h^{\widehat{u}}}_{\pp\mc T_h}
-\kappa^2(\rho\overline{\bs e_u},\bs\varepsilon_h^u)_{\mc T_h}.
\end{align*}
Consequently,
\begin{align}\label{eq:fq_ene}
\|\bs\varepsilon_h^\sigma\|_{\mc T_h} + \|\mr P_M(\bs\varepsilon_h^u-\bs\varepsilon_h^{\widehat{u}})\|_{\bs\tau}\lesssim \kappa\|\bs\varepsilon_h^u\|_{\mc T_h}+\kappa\|\bs e_u\|_{\mc T_h}
+\|\bs e_\sigma\|_{\mc T_h}+\|\bs\delta\|_{\bs\tau^{-1}}.
\end{align}
\end{proposition}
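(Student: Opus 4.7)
The plan is to mirror Proposition \ref{prop:energy_id} step by step, the only new ingredient being the mass term $-\kappa^2\rho\bs u$. First I will apply the complex extension of the projection (see \eqref{def:com_proj}) elementwise to $(\bs\sigma,\bs u)$ and subtract the HDG+ equations \eqref{eq:fq_hdg} from the resulting projection identities supplied by Theorem \ref{th:PROJ}. This yields four error equations that coincide with \eqref{eq:st_ereq} except that the second one gains the term $-(\kappa^2\rho\bs\varepsilon_h^u,\bs w)_{\mc T_h}$ on the left and $-(\kappa^2\rho\bs e_u,\bs w)_{\mc T_h}$ on the right, coming from the decomposition $(\kappa^2\rho\Pi\bs u,\bs w)_{\mc T_h}=(\kappa^2\rho\bs u,\bs w)_{\mc T_h}+(\kappa^2\rho\bs e_u,\bs w)_{\mc T_h}$.

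\textbf{Deriving the identity.} Next I will test the first error equation with $\bs\theta=\bs\varepsilon_h^\sigma$, the second with $\bs w=\bs\varepsilon_h^u$, and combine the third with the fourth under $\bs\mu=\bs\varepsilon_h^{\widehat u}$ exactly as in \eqref{eq:st_ereq_3a}. Summing, every coupling term involving $\div\bs\varepsilon_h^\sigma$ and $\bs\varepsilon_h^\sigma\bs n$ cancels under the same integration-by-parts bookkeeping used in the real case. What remains on the left is precisely $\|\bs\varepsilon_h^\sigma\|_{\mc A}^2+\|\mr P_M(\bs\varepsilon_h^u-\bs\varepsilon_h^{\widehat u})\|_{\bs\tau}^2-\kappa^2\|\bs\varepsilon_h^u\|_\rho^2$, and on the right the three advertised projection-error contributions, with complex conjugates on $\bs e_\sigma$, $\bs\delta$, $\bs e_u$ dictated by the sesquilinear convention used for the complex $L^2$ norms.

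\textbf{From identity to inequality.} For \eqref{eq:fq_ene}, I will move $\kappa^2\|\bs\varepsilon_h^u\|_\rho^2$ to the right-hand side and apply Cauchy--Schwarz to each right-hand side term. The only subtlety is the boundary pairing: since $\bs\delta\in\bs M_h$, we have $\dualpr{\overline{\bs\delta},\bs\varepsilon_h^u-\bs\varepsilon_h^{\widehat u}}_{\pp\mc T_h}=\dualpr{\overline{\bs\delta},\mr P_M(\bs\varepsilon_h^u-\bs\varepsilon_h^{\widehat u})}_{\pp\mc T_h}\le\|\bs\delta\|_{\bs\tau^{-1}}\|\mr P_M(\bs\varepsilon_h^u-\bs\varepsilon_h^{\widehat u})\|_{\bs\tau}$. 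Young's inequality then absorbs $\tfrac12\|\bs\varepsilon_h^\sigma\|_{\mc A}^2$ and $\tfrac12\|\mr P_M(\bs\varepsilon_h^u-\bs\varepsilon_h^{\widehat u})\|_{\bs\tau}^2$ back into the left-hand side, leaving $\|\bs\varepsilon_h^\sigma\|_{\mc A}^2+\|\mr P_M(\cdot)\|_{\bs\tau}^2\lesssim\kappa^2\|\bs\varepsilon_h^u\|_\rho^2+\kappa^2\|\bs e_u\|_\rho^2+\|\bs e_\sigma\|_{\mc A}^2+\|\bs\delta\|_{\bs\tau^{-1}}^2$, from which \eqref{eq:fq_ene} follows by taking square roots and using the uniform equivalences $\|\cdot\|_{\mc A}\approx\|\cdot\|_\rho\approx\|\cdot\|_{\mc T_h}$ granted by the hypotheses on $\mc A$ and $\rho$.

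\textbf{Main obstacle.} The algebra is essentially the steady-state argument again; the mass term does not interact with the HDG integration by parts and thus survives untouched as the indefinite piece on the left. The only thing to be careful with is tracking complex conjugates consistently so that each diagonal pairing produces a real nonnegative quantity, which follows transparently from defining the projection through its real and imaginary parts in \eqref{def:com_proj}.
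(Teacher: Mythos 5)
Your proposal is correct and follows essentially the same route as the paper: derive the complex error equations (the steady-state ones plus the $-\kappa^2\rho$ mass terms), test with the (conjugated) discrete errors to get the Gårding identity, then Cauchy--Schwarz and Young to absorb the positive terms and obtain \eqref{eq:fq_ene}. The conjugation bookkeeping and the observation that $\bs\delta$ lies in $\bs M_h$ so the boundary pairing can be bounded by $\|\bs\delta\|_{\bs\tau^{-1}}\|\mr P_M(\bs\varepsilon_h^u-\bs\varepsilon_h^{\widehat u})\|_{\bs\tau}$ are exactly as in the paper's argument.
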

\begin{proof}
By similar ideas used in the proof of Proposition \ref{prop:energy_id}, we obtain the following error equations:
\begin{subequations}\label{eq:fq_erreq}
\begin{alignat}{5}
(\mc A\bs\varepsilon_h^\sigma,\bs\theta)_{\mc T_h} + (\bs\varepsilon_h^u,\div\bs\theta)_{\mc T_h}-\dualpr{\bs\varepsilon_h^{\widehat{u}},\bs\theta\bs n}_{\pp\mc T_h}&=(\mc A\bs e_\sigma,\bs\theta)_{\mc T_h},\\
-(\div\bs\varepsilon_h^\sigma,\bs w)_{\mc T_h} + \dualpr{\bs\tau \mr P_M(\bs\varepsilon_h^u-\bs\varepsilon_h^{\widehat{u}}),\bs w}_{\pp\mc T_h}
-(\kappa^2\rho \bs\varepsilon_h^u,\bs w)_{\mc T_h}
&=\dualpr{\bs \delta,\bs w}_{\pp\mc T_h}-(\kappa^2\rho \bs e_u,\bs w)_{\mc T_h},\\
\dualpr{\bs\varepsilon_h^\sigma\bs n -\bs\tau \mr P_M(\bs\varepsilon_h^u-\bs\varepsilon_h^{\widehat{u}}),\bs\mu}
_{\partial\mc T_h\backslash\Gamma}
&=-\dualpr{\bs\delta,\bs\mu}_{\partial\mc T_h\backslash\Gamma},\\
\dualpr{\bs\varepsilon_h^{\widehat{u}},\bs\mu}_\Gamma &= 0,
\end{alignat}
\end{subequations}
for all $\bs\theta\in\bs V_h$, $\bs w\in\bs W_h$ and $\bs\mu\in\bs M_h$. 
Taking $\bs\theta=\overline{\bs\varepsilon_h^\sigma}$, $\bs w=\overline{\bs\varepsilon_h^u}$ and $\bs\mu=\overline{\bs\varepsilon_h^{\widehat{u}}}$, then adding the equations, we have the energy identity.

Denoting $A^2:=\|\bs\varepsilon_h^\sigma\|_{\mc T_h}^2 + \|\mr P_M(\bs\varepsilon_h^u-\bs\varepsilon_h^{\widehat{u}})\|_{\bs\tau}^2$, and
applying the Cauchy-Schwarz inequality on the identity, we have
\[
A^2\lesssim \kappa^2\|\bs\varepsilon_h^u\|_{\mc T_h}^2 + \kappa^2\|\bs e_u\|_{\mc T_h}^2
+A(\|\bs e_\sigma\|_{\mc T_h}+\|\bs\delta\|_{\bs\tau^{-1}}).
\]
The estimate \eqref{eq:fq_ene} now follows by using Young's inequality.
\end{proof}

\begin{proposition}[Estimate by bootstrapping] \label{prop:fq_est}
Suppose $k\ge1$ and \eqref{eq:fq_reg} holds. If $(h^2\kappa^2+h\kappa)C_\kappa$ is small enough, then
\begin{align*}
\|\bs\varepsilon_h^u\|_{\mc T_h}&\lesssim h(\kappa^2+\kappa)C_\kappa\|\bs e_u\|_{\mc T_h}
+h C_\kappa(\|\bs\delta\|_{\bs\tau^{-1}}+\|\bs e_\sigma\|_{\mc T_h}),\\
\|\bs\varepsilon_h^\sigma\|_{\mc T_h} + \|\mr P_M(\bs\varepsilon_h^u-\bs\varepsilon_h^{\widehat{u}})\|_{\bs\tau}&\lesssim
(1+h\kappa C_\kappa)(\|\bs e_\sigma\|_{\mc T_h}+\|\bs\delta\|_{\bs\tau^{-1}}) + (\kappa+h C_\kappa(\kappa^3+\kappa^2))\|\bs e_u\|_{\mc T_h}.
\end{align*}
\end{proposition}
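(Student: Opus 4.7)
The plan is to mirror the duality strategy used for Proposition \ref{prop:st_dual} in the steady-state case, but accounting for the zeroth-order term $-\kappa^2\rho\bs u$ which produces a G{\aa}rding-type (indefinite) identity rather than a coercive one. I will estimate $\|\bs\varepsilon_h^u\|_{\mc T_h}$ first by a duality argument, and then bootstrap using Proposition \ref{prop:fq_id} to get the bound on $\|\bs\varepsilon_h^\sigma\|_{\mc T_h}+\|\mr P_M(\bs\varepsilon_h^u-\bs\varepsilon_h^{\widehat u})\|_{\bs\tau}$.

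First I would fix $\bs\Theta=\overline{\bs\varepsilon_h^u}$ in \eqref{eq:fq_adpde} and apply the complex adjoint projection defined by \eqref{def:com_proj} and \eqref{eq:1.4}, obtaining $(\Pi\bs\Psi,\Pi\bs\Phi)$ and remainder $\bs\Delta$. By Theorem \ref{th:PROJ} (applied with $-\bs\tau$ in place of $\bs\tau$), these satisfy the adjoint analog of \eqref{eq:tauProp}. I then test the four error equations \eqref{eq:fq_erreq} against the quadruple $(\Pi\bs\Psi,\Pi\bs\Phi,\mr P_M\bs\Phi,\cdot)$ and, symmetrically, test the adjoint-projection identities against $(\bs\varepsilon_h^\sigma,\bs\varepsilon_h^u,\bs\varepsilon_h^{\widehat u},\cdot)$. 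Adding and subtracting these in the standard HDG style produces, after cancellation of all interior and jump contributions coming from the adjoint PDE, an identity of the form
\begin{align*}
\|\bs\varepsilon_h^u\|_{\mc T_h}^2
&=(\mc A(\Pi\bs\Psi-\bs\Psi),\bs\varepsilon_h^\sigma)_{\mc T_h}
-(\Pi\bs\sigma-\bs\sigma,\bs\varepsilon(\bs\Phi))_{\mc T_h}
-\kappa^2(\rho\bs e_u,\Pi\bs\Phi)_{\mc T_h}\\
&\quad+\langle\bs\Delta,\mr P_M\bs\varepsilon_h^u-\bs\varepsilon_h^{\widehat u}\rangle_{\partial\mc T_h}
+\langle\bs\delta,\mr P_M(\Pi\bs\Phi-\bs\Phi)\rangle_{\partial\mc T_h}.
\end{align*}
The $\bs\varepsilon(\bs\Phi)$ term is controlled by \eqref{eq:rmproj} combined with a standard orthogonality trick $\bs\varepsilon(\bs\Phi)-\mr P_0\bs\varepsilon(\bs\Phi)$, giving a factor $h\|\bs\Phi\|_{2,\Omega}$. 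The remaining projection and remainder factors are all of order $h$ via Theorem \ref{th:PROJ} with $m=1$, and the regularity estimate \eqref{eq:fq_reg} converts adjoint norms into $C_\kappa\|\bs\varepsilon_h^u\|_{\mc T_h}$.

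The bootstrap step is the delicate one. The $\kappa^2$ in the $(\rho\bs e_u,\Pi\bs\Phi)$ term and the need to bound $\bs\varepsilon_h^\sigma$ and the jump norm via Proposition \ref{prop:fq_id} make several self-coupling terms of the form $h\kappa C_\kappa\|\bs\varepsilon_h^u\|_{\mc T_h}^2$ and $h^2\kappa^2 C_\kappa\|\bs\varepsilon_h^u\|_{\mc T_h}^2$ appear on the right-hand side: the first comes from $hC_\kappa\|\bs\varepsilon_h^u\|_{\mc T_h}\cdot\kappa\|\bs\varepsilon_h^u\|_{\mc T_h}$ via \eqref{eq:fq_ene}, while the second arises when the $\kappa^2(\rho\cdot,\Pi\bs\Phi)$ contribution is combined with $\|\Pi\bs\Phi-\bs\Phi\|\lesssim hC_\kappa\|\bs\varepsilon_h^u\|_{\mc T_h}$. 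Precisely this is the reason for the hypothesis that $(h^2\kappa^2+h\kappa)C_\kappa$ be small: it permits absorption of both self-coupling terms into the left-hand side. What remains is the first claimed bound on $\|\bs\varepsilon_h^u\|_{\mc T_h}$.

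Finally I would substitute this bound into \eqref{eq:fq_ene}. The term $\kappa\|\bs\varepsilon_h^u\|_{\mc T_h}$ contributes $\kappa\cdot h(\kappa^2+\kappa)C_\kappa\|\bs e_u\|_{\mc T_h}=hC_\kappa(\kappa^3+\kappa^2)\|\bs e_u\|_{\mc T_h}$ and $\kappa\cdot hC_\kappa(\|\bs\delta\|_{\bs\tau^{-1}}+\|\bs e_\sigma\|_{\mc T_h})=h\kappa C_\kappa(\|\bs\delta\|_{\bs\tau^{-1}}+\|\bs e_\sigma\|_{\mc T_h})$, which together with the remaining $\kappa\|\bs e_u\|_{\mc T_h}$, $\|\bs e_\sigma\|_{\mc T_h}$, $\|\bs\delta\|_{\bs\tau^{-1}}$ coming from \eqref{eq:fq_ene} recovers exactly the second claimed estimate. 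The main obstacle is not the algebra per se but careful book-keeping of the various powers of $\kappa$ so that the two absorption terms arise with matching orders and the condition $(h^2\kappa^2+h\kappa)C_\kappa$ suffices; in particular, one must verify that no uncontrolled $\kappa^2\|\bs\varepsilon_h^u\|_{\mc T_h}\cdot\|\cdot\|$ cross-term survives before the absorption is performed.
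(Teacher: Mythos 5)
Your plan is the paper's proof: duality with the adjoint projection at $-\bs\tau$, comparison of the two tested systems, the $\mr P_0$-orthogonality trick via \eqref{eq:rmproj}, absorption of the self-coupling terms under the smallness hypothesis, and a final substitution into \eqref{eq:fq_ene}. The one substantive discrepancy is in your displayed duality identity. When the cancellation is carried out, the $\kappa^2$ contributions do not collapse to the single term $-\kappa^2(\rho\bs e_u,\Pi\bs\Phi)_{\mc T_h}$: writing $\Pi\bs\Phi=\bs\Phi+\bs e_\Phi$ and $\bs u-\bs u_h=-\bs e_u+\bs\varepsilon_h^u$, what survives is $\kappa^2(\rho\overline{\bs\Phi},\bs u-\Pi\bs u)_{\mc T_h}+\kappa^2(\overline{\bs e_\Phi},\rho(\bs u-\bs u_h))_{\mc T_h}$ (cf.\ \eqref{eq:6.99}). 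The second of these --- absent from your identity --- is the actual source of the $h^2\kappa^2C_\kappa\|\bs\varepsilon_h^u\|_{\mc T_h}^2$ self-coupling, via $\|\bs e_\Phi\|_{\mc T_h}\lesssim h^2C_\kappa\|\bs\varepsilon_h^u\|_{\mc T_h}$; note the power is $h^2$, not the $h$ you wrote, since Theorem \ref{th:PROJ} with $m=1$ gives $h^{-1}\|\bs e_\Phi\|_{\mc T_h}\lesssim h(\|\bs\Psi\|_{1,\Omega}+\|\bs\Phi\|_{2,\Omega})$, and with only one power of $h$ the hypothesis that $(h^2\kappa^2+h\kappa)C_\kappa$ be small would not suffice to absorb an $h\kappa^2C_\kappa\|\bs\varepsilon_h^u\|_{\mc T_h}^2$ term. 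The term you do keep, of the form $\kappa^2(\rho\,\cdot\,,\bs u-\Pi\bs u)_{\mc T_h}$, produces no self-coupling but needs its own orthogonality trick: pairing against $\Pi\bs\Phi$ (or $\bs\Phi$) directly yields only $\kappa^2C_\kappa\|\bs e_u\|_{\mc T_h}\|\bs\varepsilon_h^u\|_{\mc T_h}$, without the factor $h$ required by your first claimed estimate; you must subtract $\mr P_0(\rho\overline{\bs\Phi})$ exactly as you do for $\bs\varepsilon(\bs\Phi)$, using $\rho\in W^{1,\infty}(\Omega)$ and the moment condition \eqref{eq:tauProp_a} (valid since $k\ge1$).

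These are book-keeping repairs inside the same argument rather than a change of route; once the identity is corrected and the two orthogonality tricks are applied, your absorption and bootstrap steps go through exactly as in the paper.
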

\begin{proof}
Consider the adjoint equations \eqref{eq:fq_adpde} and take $\bs\Theta = \bs\varepsilon_h^u$.
Following similar ideas in the proof of Proposition \ref{prop:st_dual}, we first apply the projection on $(\bs\Psi,\bs\Phi)$ with stabilization function $-\bs\tau$ to obtain the projection equations (about $(\Pi\bs\Psi,\Pi\bs\Phi,\bs\Delta)$). We next test the projection equations with $\bs\varepsilon_h^\sigma$, $\bs\varepsilon_h^u$ and $\bs\varepsilon_h^{\widehat{u}}$, test the error equations \eqref{eq:fq_erreq} with $\overline{\Pi\bs\Psi}$, $\overline{\Pi\bs\Psi}$ and $\overline{\mr P_M\bs\Phi}$, and compare the two sets of equations. Then we obtain (define $\bs e_\Psi:=\Pi\bs\Psi-\bs\Psi$ and $\bs e_\Phi:=\Pi\bs\Phi-\bs\Phi$ for convenience)
\begin{align*}
&(\mc A\bs e_\sigma,\overline{\Pi\bs\Psi})_{\mc T_h} + \dualpr{\bs\delta,\overline{\Pi\bs\Phi}-\overline{\mr P_M\bs\Phi}}_{\pp\mc T_h}+\kappa^2(\rho(\bs u-\bs u_h),\overline{\Pi\bs\Phi})_{\mc T_h}\\
&\qquad\qquad=(\mc A\overline{\bs e_\Psi}, \bs\varepsilon_h^\sigma)_{\mc T_h}+\kappa^2(\rho\overline{\bs\Phi},\bs\varepsilon_h^u)_{\mc T_h}+\|\bs\varepsilon_h^u\|_{\mc T_h}^2-\dualpr{\overline{\bs\Delta},\bs\varepsilon_h^u-\bs\varepsilon_h^{\widehat{u}}}_{\pp\mc T_h}.
\end{align*}
After rearranging terms, we have
\begin{alignat}{6}\nonumber
\|\bs\varepsilon_h^u\|_{\mc T_h}^2
&=\kappa^2(\overline{\bs e_\Phi},\rho(\bs u-\bs u_h))_{\mc T_h} + \kappa^2(\rho\overline{\bs\Phi},\bs u-\Pi\bs u)_{\mc T_h}
+(\mc A(\bs\sigma_h-\bs\sigma),\overline{\bs e_\Psi})_{\mc T_h} - (\bs e_\sigma,\overline{\bs\varepsilon(\bs\Phi)})_{\mc T_h}\\
\label{eq:6.99}	
&\quad +\dualpr{\bs\delta,\overline{\Pi\bs\Phi}-\overline{\mathrm{P}_M\bs\Phi}}_{\pp\mc T_h}
+\dualpr{\overline{\bs\Delta},\bs\varepsilon_h^u-\bs\varepsilon_h^{\widehat{u}}}_{\pp\mc T_h}.
\end{alignat}
By Theorem \ref{th:PROJ} (with $m=1$), we have
\[
h^{-1}\|\bs e_\Phi\|_{\mc T_h}+\|\bs e_\Psi\|_{\mc T_h}+\|\Pi\bs\Phi-\mr P_M\bs\Phi\|_{\bs\tau}
+\|\bs\Delta\|_{\bs\tau^{-1}}
\lesssim h(\|\bs\Psi\|_{1,\Omega}+\|\bs\Phi\|_{2,\Omega}),
\]
and also
\begin{align*}
\kappa^2(\rho\overline{\bs\Phi},\bs u-\Pi\bs u)_{\mc T_h}
=\kappa^2(\rho\overline{\bs\Phi}-\mr P_0(\rho\overline{\bs\Phi}),\bs u-\Pi\bs u)_{\mc T_h}\lesssim h\kappa^2\|\rho\|_{W^{1,\infty}(\Omega)}\|\bs\Phi\|_{1,\Omega}\|\bs e_u\|_{\mc T_h}.
\end{align*}
By \eqref{eq:rmproj} we have $(\bs e_\sigma,\bs\varepsilon(\overline{\bs\Phi}))_{\mc T_h}
=(\bs e_\sigma,\bs\varepsilon(\overline{\bs\Phi})-\mr P_0\bs\varepsilon(\overline{\bs\Phi}))_{\mc T_h}
\lesssim h\|\bs e_\sigma\|_{\mc T_h}\|\bs\Phi\|_{2,\Omega}$. Now we use \eqref{eq:fq_reg} in \eqref{eq:6.99} to obtain
\begin{align*}
\|\bs\varepsilon_h^u\|_{\mc T_h}
&\lesssim h^2\kappa^2C_\kappa\|\bs u-\bs u_h\|_{\mc T_h}+ h\kappa^2C_\kappa\|\Pi\bs u-\bs u\|_{\mc T_h}
+hC_\kappa\|\bs\sigma-\bs\sigma_h\|_{\mc T_h}+hC_\kappa\|\bs e_\sigma\|_{\mc T_h}\\
&\quad +hC_\kappa\|\bs\delta\|_{\bs\tau^{-1}}+hC_\kappa\|\mr P_M\bs\varepsilon_h^u-\bs\varepsilon_h^{\widehat{u}}\|_{\bs\tau},\\
&\lesssim h^2\kappa^2C_\kappa\|\bs\varepsilon_h^u\|_{\mc T_h}+ h\kappa^2C_\kappa\|\bs e_u\|_{\mc T_h}
+hC_\kappa\|\bs e_\sigma\|_{\mc T_h}\\
&\quad +hC_\kappa\|\bs\delta\|_{\bs\tau^{-1}}
+hC_\kappa(\|\bs\varepsilon_h^\sigma\|_{\mc T_h}+\|\mr P_M\bs\varepsilon_h^u-\bs\varepsilon_h^{\widehat{u}}\|_{\bs\tau}).
\end{align*}
Next we use \eqref{eq:fq_ene} and bound
\begin{align*}
\|\bs\varepsilon_h^u\|_{\mc T_h}\lesssim
(h^2\kappa^2+h\kappa)C_\kappa\|\bs\varepsilon_h^u\|_{\mc T_h}+(h\kappa+h\kappa^2)C_\kappa\|\bs e_u\|_{\mc T_h} + hC_\kappa(\|\bs e_\sigma\|_{\mc T_h}+\|\bs\delta\|_{\bs\tau^{-1}}).
\end{align*}
Therefore, when $(h^2\kappa^2+h\kappa)C_\kappa$ is small enough, we have
\begin{align*}
\|\bs\varepsilon_h^u\|_{\mc T_h}\lesssim
(h\kappa+h\kappa^2)C_\kappa\|\bs e_u\|_{\mc T_h} + hC_\kappa(\|\bs e_\sigma\|_{\mc T_h}+\|\bs\delta\|_{\bs\tau^{-1}}).
\end{align*}
The first inequality is thus proved.
The second inequality can be proved by combining \eqref{eq:fq_ene} and the above inequality. 
\end{proof}

Theorem \ref{th:fq} now follows easily from Proposition \ref{prop:fq_est} and Theorem \ref{th:PROJ}.

\section{Transient elastodynamics}\label{sec:trans_elas}

\subsection{The semi-discrete HDG+ method}
In this section, we present a semi-discrete (in space) HDG+ method for transient elastic waves and prove it is uniformly-in-time optimal convergent. The equations we consider are
\begin{subequations}
\label{eq:elwv_pde}
\begin{alignat}{5}
\mc A\bs\sigma(t) 
- \bs\varepsilon(\bs u(t)) &=0
&\qquad&\text{in}\ \Omega\times[0,T],\\
\rho\ddot{\bs u}(t)
- \div\bs\sigma(t) &= \bs f(t)
&&\text{in}\ \Omega\times[0,T],\\
\gamma\bs u(t)&=\bs g(t)
&&\text{on}\ \Gamma\times[0,T],\\
\bs u(0)&= \bs u_0
&&\text{on}\ \Omega,\\
\dot{\bs u}(0)&= \bs v_0
&&\text{on}\ \Omega,
\end{alignat}
\end{subequations}
where $\bs f\in C([0,\infty);L^2(\Omega;\mbb R^3))$ and $\bs g\in C([0,\infty);H^{1/2}(\Gamma;\mbb R^3))$. For the initial conditions, we assume $\bs u_0,\bs v_0\in L^2(\Omega;\mbb R^3)$.

The HDG+ method for \eqref{eq:elwv_pde} looks for
\begin{align*}
\bs\sigma_h,\bs u_h,\widehat{\bs u}_h:
[0,\infty)\rightarrow \bs V_h\times\bs W_h\times\bs M_h,
\end{align*}
such that for all $t\ge0$
\begin{subequations}
\label{eq:elwv_HDG}
\begin{align}
\label{eq:elwv_HDGa}
(\mc A\bs\sigma_h(t),\bs\theta)_{\mc T_h} + (\bs u_h(t),\div\bs\theta)_{\mc T_h}
-\dualpr{\widehat{\bs u}_h(t),\bs\theta\bs n}_{\pp\mc T_h}
&=0,\\
\label{eq:elwv_HDGb}
(\rho\ddot{\bs u}_h(t),\bs w)_{\mc T_h} - (\div\bs\sigma_h(t),\bs w)_{\mc T_h}
+\dualpr{\bs\tau\mathrm P_M(\bs u_h(t)-\widehat{\bs u}_h(t)),\bs w}_{\pp\mc T_h}
&=(\bs f(t),\bs w)_{\mc T_h},\\
\label{eq:elwv_HDGc}
\dualpr{\bs\sigma_h(t)\bs n-\bs\tau\mr P_M(\bs u_h(t)-\widehat{\bs u}_h(t)),\bs\mu}_{\pp\mc T_h\backslash\Gamma}
&= 0,\\
\label{eq:elwv_HDGd}
\dualpr{\widehat{\bs u}_h(t),\bs\mu}_\Gamma 
&= \dualpr{\bs g(t),\bs\mu}_{\Gamma},
\end{align}
\end{subequations}
for all $\bs\theta\in\bs V_h$, $\bs w\in\bs W_h$ and $\bs\mu\in\bs M_h$. 
For the initial conditions of the method, we use ideas from \cite{CoFuHuJiSaMaSa:2018}. 
The initial velocity $\dot{\bs u}_h(0)$ is defined by using the projection in Theorem \ref{th:PROJ}
\begin{align}\label{eq:elwv_iniv}
(\times,\dot{\bs u}_h(0))&= \Pi(\mc A^{-1}\bs\varepsilon(\bs v_0),\bs v_0;\bs\tau),
\end{align}
and the initial displacement $\bs u_h(0)$ is defined to be the solution of the HDG+ discretization of the steady-state system
\begin{subequations}\label{eq:elwv_pdet0}
\begin{alignat}{5}
\mc A\bs\sigma(0) - \bs\varepsilon(\bs u(0))&=0
&\qquad &\text{in}\ \Omega,\\
-\div\bs\sigma(0) &= 
-\div(\mc A^{-1}\bs\varepsilon(\bs u_0))
&&\text{in}\ \Omega,\\
\bs u(0) &= g(0)
&&\text{on}\ \Gamma.
\end{alignat}
\end{subequations}
Namely, we find $(\bs\sigma_h(0),\bs u_h(0), \widehat{\bs u}_h(0))\in \bs V_h\times\bs W_h\times\bs M_h$ such that
\begin{subequations}
\label{eq:elwv_HDGt0}
\begin{align}
\label{eq:elwv_HDGt0a}
(\mc A\bs\sigma_h(0),\bs\theta)_{\mc T_h} + (\bs u_h(0),\div\bs\theta)_{\mc T_h}
-\dualpr{\widehat{\bs u}_h(0),\bs\theta\bs n}_{\pp\mc T_h}
&=0,\\
\label{eq:elwv_HDGt0b}
- (\div\bs\sigma_h(0),\bs w)_{\mc T_h}
+\dualpr{\bs\tau\mathrm P_M(\bs u_h(0)-\widehat{\bs u}_h(0)),\bs w}_{\pp\mc T_h}
&=(-\div(\mc A^{-1}\bs\varepsilon(\bs u_0)),\bs w)_{\mc T_h},\\
\label{eq:elwv_HDGt0c}
\dualpr{\bs\sigma_h(0)\bs n-\bs\tau\mr P_M(\bs u_h(0)-\widehat{\bs u}_h(0)),\bs\mu}_{\pp\mc T_h\backslash\Gamma}
&= 0,\\
\label{eq:elwv_HDGt0d}
\dualpr{\widehat{\bs u}_h(0),\bs\mu}_\Gamma 
&= \dualpr{\bs g(0),\bs\mu}_{\Gamma},
\end{align}
\end{subequations}
for all $(\bs\theta,\bs w,\bs\mu)\in \bs V_h\times \bs W_h \times\bs M_h$.
For notational convenience, we define the following space-time norms
\begin{align*}
\vvvert\cdot\vvvert_{*,p}^{[0,T]}:=
\left(\int_0^T \|\cdot(t)\|_*^p\,\mathrm dt\right)^{1/p},\quad
\end{align*}
where $*$ can be replaced by $H^m(\Omega)$, $\Omega$, $\mc T_h$, $\mc A$, $\rho$, $\pp\mc T_h$, $\bs\tau$ or $\bs\tau^{-1}$. The parameter $p$ takes values in $\{1,2,\infty\}$, and when $p=\infty$, we consider the supreme norm in time instead of $L^p$ integration. 
Now we define the projections and the remainder terms for all $t\ge0$:
\begin{align*}
(\Pi\bs\sigma(t),\Pi\bs u(t)) 
:= \prod_{K\in\mc T_h}\Pi(\bs\sigma(t)\big|_K,\bs u(t)\big|_K;\bs\tau\big|_{\pp K}),
\quad
\bs\delta(t):=\prod_{K\in\mc T_h}\mathrm R(\bs\sigma(t)\big|_K,\bs u(t)\big|_K;\bs\tau\big|_{\pp K}).
\end{align*}
The related error and approximation terms (for all $t\ge0$) are denoted
\begin{alignat*}{4}
\bs\varepsilon_h^\sigma(t) &:= \Pi\bs\sigma(t)-\bs\sigma_h(t),&\quad
\bs\varepsilon_h^u(t)&:=\Pi\bs u(t)-\bs u_h(t),\quad
\bs\varepsilon_h^{\widehat{u}}(t)
:=\mathrm P_M\bs u(t)-\widehat{\bs u}_h(t),\\
\bs e_\sigma(t)&:=\Pi\bs\sigma(t)-\bs\sigma(t),&
\bs e_u(t)&:=\Pi\bs u(t)-\bs u(t).
\end{alignat*}
For the rest of this section, the wiggles sign $\lesssim$ will hide constants independent of $h$ and $T$.
The main results in this section are Theorems \ref{th:elwv_th1} and \ref{th:elwv_spconv}.

\begin{theorem}\label{th:elwv_th1}
For $k\ge1$, the following estimates hold
\begin{align*}
&\|\bs\varepsilon_h^\sigma(T)\|_{\mc A}
+\|\mathrm P_M(\bs\varepsilon_h^u(T)
	-\bs\varepsilon_h^{\widehat{u}}(T))\|_{\bs\tau}
+\|\dot{\bs\varepsilon}_h^u(T)\|_\rho\\
&\qquad\qquad\lesssim 
\|\bs e_\sigma(0)\|_{\mc A}
+\vvvert\dot{\bs e}_\sigma\vvvert_{\mc A,1}^{[0,T]}
+
\vvvert\ddot{\bs e}_u\vvvert_{\rho,1}^{[0,T]}
+
\vvvert\bs\delta\vvvert_{\bs\tau^{-1},\infty}^{[0,T]}
+\vvvert\dot{\bs\delta}\vvvert_{\bs\tau^{-1},1}^{[0,T]}
,\\
&\|\dot{\bs\varepsilon}_h^\sigma(T)\|_{\mc A}
+\|\mathrm P_M(\dot{\bs\varepsilon}_h^u(T)
	-\dot{\bs\varepsilon}_h^{\widehat{u}}(T))\|_{\bs\tau}
+\|\ddot{\bs\varepsilon}_h^u(T)\|_\rho\\
&\qquad\qquad\lesssim 
\|\dot{\bs e}_\sigma(0)\|_{\mc A}
+\|\ddot{\bs e}_u(0)\|_\rho
+\vvvert\ddot{\bs e}_\sigma\vvvert_{\mc A,1}^{[0,T]}
+\vvvert\dddot{\bs e}_u\vvvert_{\rho,1}^{[0,T]}
+\vvvert\dot{\bs\delta}\vvvert_{\bs\tau^{-1},\infty}^{[0,T]}
+\vvvert\ddot{\bs\delta}\vvvert_{\bs\tau^{-1},1}^{[0,T]}
.
\end{align*}
\end{theorem}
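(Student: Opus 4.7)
I will prove both estimates by an energy argument applied to the HDG+ error equations obtained from the projection of Theorem~\ref{th:PROJ}. First I subtract the HDG+ system~\eqref{eq:elwv_HDG} from the identities~\eqref{eq:tauProp} applied pointwise in $t$ to the exact solution $(\bs\sigma(t),\bs u(t))$. The resulting error system is structurally identical to~\eqref{eq:st_ereq} with two modifications: a kinetic term $(\rho\ddot{\bs\varepsilon}_h^u,\bs w)_{\mc T_h}$ joins the second equation, and an extra source $-(\rho\ddot{\bs e}_u,\bs w)_{\mc T_h}$ appears on its right-hand side.

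For the first inequality the natural energy is
\[
E(t):=\|\bs\varepsilon_h^\sigma\|_{\mc A}^2+\|\dot{\bs\varepsilon}_h^u\|_\rho^2+\|\mr P_M(\bs\varepsilon_h^u-\bs\varepsilon_h^{\widehat u})\|_{\bs\tau}^2.
\]
I differentiate the first error equation in time and test it with $\bs\theta=\bs\varepsilon_h^\sigma$, test the second with $\bs w=\dot{\bs\varepsilon}_h^u$, and test the third (combined with the fourth as in~\eqref{eq:st_ereq_3a}) with $\bs\mu=\dot{\bs\varepsilon}_h^{\widehat u}$. The cross-divergence and boundary couplings cancel exactly as in Proposition~\ref{prop:energy_id}, leaving
\[
\tfrac{1}{2}\tfrac{d}{dt}E(t)=(\mc A\dot{\bs e}_\sigma,\bs\varepsilon_h^\sigma)_{\mc T_h}-(\rho\ddot{\bs e}_u,\dot{\bs\varepsilon}_h^u)_{\mc T_h}+\dualpr{\bs\delta,\dot{\bs\varepsilon}_h^u-\dot{\bs\varepsilon}_h^{\widehat u}}_{\pp\mc T_h}.
\]
Integrating over $[0,T]$, I integrate by parts in time on the $\bs\delta$ pairing (producing endpoint contributions at $t=0,T$ and an interior term against $\dot{\bs\delta}$). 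Cauchy-Schwarz against $\sup_{[0,T]}E^{1/2}$ followed by a Young-type absorption yields the first bound provided $E(0)^{1/2}$ is controlled. But the construction~\eqref{eq:elwv_iniv} gives $\dot{\bs u}_h(0)=\Pi\bs v_0$, hence $\dot{\bs\varepsilon}_h^u(0)=\bs 0$, and the initial displacement system~\eqref{eq:elwv_HDGt0} is precisely the steady-state HDG+ discretization of Section~\ref{sec:steady_state}; Proposition~\ref{prop:energy_id} then gives
\[
\|\bs\varepsilon_h^\sigma(0)\|_{\mc A}+\|\mr P_M(\bs\varepsilon_h^u(0)-\bs\varepsilon_h^{\widehat u}(0))\|_{\bs\tau}\lesssim \|\bs e_\sigma(0)\|_{\mc A}+\|\bs\delta(0)\|_{\bs\tau^{-1}},
\]
which is absorbed into the two initial/endpoint terms of the statement.

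The second inequality follows by applying the same machinery to the once-time-differentiated error equations, which is legitimate since the HDG+ scheme is linear and autonomous. The controlled quantity becomes $\tilde E(t):=\|\dot{\bs\varepsilon}_h^\sigma\|_{\mc A}^2+\|\ddot{\bs\varepsilon}_h^u\|_\rho^2+\|\mr P_M(\dot{\bs\varepsilon}_h^u-\dot{\bs\varepsilon}_h^{\widehat u})\|_{\bs\tau}^2$, and the driving data rotates up by one time derivative to $\ddot{\bs e}_\sigma$, $\dddot{\bs e}_u$, $\dot{\bs\delta}$, and $\ddot{\bs\delta}$. The only genuinely new step, and the main obstacle, is bounding $\tilde E(0)^{1/2}$. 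Since $\dot{\bs\varepsilon}_h^u(0)=\bs 0$ is already known, the first error equation differentiated at $t=0$, the original second error equation at $t=0$, and the third error equation differentiated at $t=0$ together furnish a static saddle-point system in the unknowns $(\dot{\bs\varepsilon}_h^\sigma(0),\ddot{\bs\varepsilon}_h^u(0),\dot{\bs\varepsilon}_h^{\widehat u}(0))$ driven by $\dot{\bs e}_\sigma(0)$, $\ddot{\bs e}_u(0)$, and $\dot{\bs\delta}(0)$. A one-shot energy argument on this static system, again mirroring Proposition~\ref{prop:energy_id}, produces $\tilde E(0)^{1/2}\lesssim\|\dot{\bs e}_\sigma(0)\|_{\mc A}+\|\ddot{\bs e}_u(0)\|_\rho+\|\dot{\bs\delta}(0)\|_{\bs\tau^{-1}}$, matching the initial terms in the statement and completing the proof.
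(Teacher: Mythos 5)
Your overall route coincides with the paper's: the same error equations derived from Theorem \ref{th:PROJ}, the same energy identities obtained by differentiating the constitutive error equation in time and testing with $(\bs\varepsilon_h^\sigma,\dot{\bs\varepsilon}_h^u,\dot{\bs\varepsilon}_h^{\widehat u})$, the same integration by parts in time that trades $\dualpr{\bs\delta,\dot{\bs\varepsilon}_h^u-\dot{\bs\varepsilon}_h^{\widehat u}}_{\pp\mc T_h}$ for endpoint terms plus an interior term against $\dot{\bs\delta}$, and the same absorption step (the paper packages your ``Cauchy--Schwarz against $\sup E^{1/2}$ plus Young'' as the Gr\"onwall-type Lemma \ref{lm:int_ineq}). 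The control of $E(0)$, the observation $\dot{\bs\varepsilon}_h^u(0)=\bs 0$, and the static energy argument for $\|\dot{\bs\varepsilon}_h^\sigma(0)\|_{\mc A}+\|\mathrm P_M(\dot{\bs\varepsilon}_h^u(0)-\dot{\bs\varepsilon}_h^{\widehat u}(0))\|_{\bs\tau}$ are also exactly the paper's (Proposition \ref{prop:ene_id_t0}).

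The one step whose described mechanism would fail is the bound on $\|\ddot{\bs\varepsilon}_h^u(0)\|_\rho$. The undifferentiated second error equation at $t=0$ reads
\[
(\rho\ddot{\bs\varepsilon}_h^u(0),\bs w)_{\mc T_h}-(\div\bs\varepsilon_h^\sigma(0),\bs w)_{\mc T_h}+\dualpr{\bs\tau\mathrm P_M(\bs\varepsilon_h^u(0)-\bs\varepsilon_h^{\widehat u}(0)),\bs w}_{\pp\mc T_h}=(\rho\ddot{\bs e}_u(0),\bs w)_{\mc T_h}+\dualpr{\bs\delta(0),\bs w}_{\pp\mc T_h},
\]
so it couples $\ddot{\bs\varepsilon}_h^u(0)$ to the \emph{zeroth}-derivative error quantities, not to $(\dot{\bs\varepsilon}_h^\sigma(0),\dot{\bs\varepsilon}_h^{\widehat u}(0))$. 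Your three equations therefore do not form a saddle-point system in the unknowns you list, and no choice of test functions makes the cross terms cancel in a single energy identity: testing the displayed equation with $\bs w=\ddot{\bs\varepsilon}_h^u(0)$ leaves $(\div\bs\varepsilon_h^\sigma(0),\ddot{\bs\varepsilon}_h^u(0))_{\mc T_h}$ and the stabilization term uncancelled, and estimating them by inverse inequalities costs a factor $h^{-1}$ that destroys the claimed bound. The missing idea is to invoke the defining property of the initial displacement: since $\bs u_h(0)$ solves the steady-state HDG+ system \eqref{eq:elwv_HDGt0}, the static error equation \eqref{eq:elwv_ereqt0b} holds, and subtracting it from the display cancels those terms exactly, leaving $(\rho\ddot{\bs\varepsilon}_h^u(0),\bs w)_{\mc T_h}=(\rho\ddot{\bs e}_u(0),\bs w)_{\mc T_h}$ for all $\bs w\in\bs W_h$, hence $\|\ddot{\bs\varepsilon}_h^u(0)\|_\rho\le\|\ddot{\bs e}_u(0)\|_\rho$. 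With that substitution (which is precisely \eqref{eq:prp_enid_t0_c} in the paper) your argument closes.
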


For the next theorem, we need to assume that the elliptic regularity condition
\begin{align}\label{eq:elwv_ellreg}
\|\bs\Phi\|_{2,\Omega}+\|\mc A^{-1}\bs\varepsilon(\bs\Phi)\|_{1,\Omega}\le C_\mr{reg}\|\div(\mc A^{-1}\bs\varepsilon(\bs\Phi))\|,
\end{align}
holds for any $\bs\Phi\in H^1(\Omega;\mbb R^3)$ such that the right hand side of the above inequality is finite. Note that \eqref{eq:elwv_ellreg} is the same as \eqref{eq:ell_reg}. We rephrase it here to have it in the form we will use it.

\begin{theorem}\label{th:elwv_spconv}
If $k\ge1$ and \eqref{eq:elwv_ellreg} holds, then
\begin{align*}
\|\bs\varepsilon_h^u(T)\|_{\Omega}&\lesssim
h (1+T)^2
\bigg(
\|\bs e_\sigma(0)\|_{\Omega}
+\vvvert\dot{\bs e}_\sigma\vvvert_{\Omega,\infty}^{[0,T]}
+\vvvert\ddot{\bs e}_\sigma\vvvert_{\Omega,\infty}^{[0,T]}\\
&\phantom{\lesssim h(1+T)^2\bigg(}+\|\bs\delta(0)\|_{\bs\tau^{-1}}
+\vvvert\dot{\bs\delta}\vvvert_{\bs\tau^{-1},\infty}^{[0,T]}
+\vvvert\ddot{\bs\delta}\vvvert_{\bs\tau^{-1},\infty}^{[0,T]}\bigg)\\
&\quad +(1+T)^2\bigg(\|\bs e_u(0)\|_{\Omega}
+\vvvert\ddot{\bs e}_u\vvvert_{\Omega,\infty}^{[0,T]}
+\vvvert\dddot{\bs e}_u\vvvert_{\Omega,\infty}^{[0,T]}\bigg).
\end{align*}
Therefore, for $1\le m\le k+1$,
\[
\|\bs\varepsilon_h^u(T)\|_{\Omega}\lesssim
h^{m+1} (1+T)^2\sum_{i=0}^3 \bigg(\vvvert\bs\sigma^{(i)}\vvvert_{H^m(\Omega),\infty}^{[0,T]}
+\vvvert\bs u^{(i)}\vvvert_{H^{m+1}(\Omega),\infty}^{[0,T]}\bigg),
\]
\end{theorem}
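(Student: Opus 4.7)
The plan is to combine a static duality argument, in the spirit of Proposition \ref{prop:st_dual}, with the energy-type estimates of Theorem \ref{th:elwv_th1}, thereby converting the energy control on $\bs\varepsilon_h^\sigma$ and on time-derivatives of $\bs\varepsilon_h^u$ into an $L^2$-in-space bound on $\bs\varepsilon_h^u(T)$ with one extra factor of $h$. Fix $T>0$, let $\bs\Phi$ solve the elliptic problem $-\div(\mc A^{-1}\bs\varepsilon(\bs\Phi))=\bs\varepsilon_h^u(T)$ in $\Omega$ with $\gamma\bs\Phi=\bs 0$ on $\Gamma$, set $\bs\Psi:=\mc A^{-1}\bs\varepsilon(\bs\Phi)$, and apply the adjoint projection of Theorem \ref{th:PROJ} (with $-\bs\tau$) to the pair $(\bs\Psi,\bs\Phi)$ to produce $(\Pi\bs\Psi,\Pi\bs\Phi)$ and the boundary remainder $\bs\Delta$. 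Assumption \eqref{eq:elwv_ellreg} gives $\|\bs\Psi\|_{1,\Omega}+\|\bs\Phi\|_{2,\Omega}\lesssim \|\bs\varepsilon_h^u(T)\|_\Omega$, and Theorem \ref{th:PROJ} with $m=1$ then delivers the crucial $O(h)$ control on $\|\bs\Psi-\Pi\bs\Psi\|_{\mc T_h}$, on $\|\bs\Delta\|_{\bs\tau^{-1}}$, and on $\|\mr P_M(\Pi\bs\Phi-\bs\Phi)\|_{\bs\tau}$.

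Next I would write the elastodynamic error equations at time $t=T$, which are the analogues of \eqref{eq:st_ereq} but with the extra term $(\rho\ddot{\bs\varepsilon}_h^u(T),\bs w)+(\rho\ddot{\bs e}_u(T),\bs w)$ appearing in the momentum equation, and test them against $(\Pi\bs\Psi,\Pi\bs\Phi,\mr P_M\bs\Phi)$, while simultaneously testing the projection equations for $(\Pi\bs\Psi,\Pi\bs\Phi,\bs\Delta)$ against $(\bs\varepsilon_h^\sigma(T),\bs\varepsilon_h^u(T),\bs\varepsilon_h^{\widehat u}(T))$. Subtracting the two systems and exploiting \eqref{eq:rmproj} to extract another factor of $h$ from $(\bs e_\sigma(T),\bs\varepsilon(\bs\Phi))_{\mc T_h}=(\bs e_\sigma(T),\bs\varepsilon(\bs\Phi)-\mr P_0\bs\varepsilon(\bs\Phi))_{\mc T_h}$ yields an identity of the schematic form
\[
\|\bs\varepsilon_h^u(T)\|_\Omega^2 \;\lesssim\; (\rho\ddot{\bs\varepsilon}_h^u(T),\Pi\bs\Phi)_{\mc T_h} \;+\; h\,\|\bs\varepsilon_h^u(T)\|_\Omega\,\mathcal R(T),
\]
where $\mathcal R(T)$ collects $\|\bs e_\sigma(T)\|_{\mc T_h}$, $\|\bs\delta(T)\|_{\bs\tau^{-1}}$, $\|\bs\varepsilon_h^\sigma(T)\|_{\mc T_h}$ and $\|\mr P_M(\bs\varepsilon_h^u(T)-\bs\varepsilon_h^{\widehat u}(T))\|_{\bs\tau}$. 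The last two ingredients of $\mathcal R(T)$ are bounded by the first estimate of Theorem \ref{th:elwv_th1}, which contributes one factor of $(1+T)$ via the trivial inequality bounding each $L^1$-in-time norm over $[0,T]$ by $T$ times the corresponding $L^\infty$-in-time norm. The residual term is handled by $(\rho\ddot{\bs\varepsilon}_h^u(T),\Pi\bs\Phi)_{\mc T_h}\le \|\ddot{\bs\varepsilon}_h^u(T)\|_\rho\,\|\Pi\bs\Phi\|_\Omega\lesssim \|\ddot{\bs\varepsilon}_h^u(T)\|_\rho\,\|\bs\varepsilon_h^u(T)\|_\Omega$, and the \emph{second} bound of Theorem \ref{th:elwv_th1} is invoked on $\|\ddot{\bs\varepsilon}_h^u(T)\|_\rho$, providing a second $(1+T)$ factor through another $L^1$-in-time integration. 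Division by $\|\bs\varepsilon_h^u(T)\|_\Omega$ and the expression of $\bs e_*^{(i)}$ and $\bs\delta^{(i)}$ through Theorem \ref{th:PROJ} applied to the time-differentiated data then yield the quantitative $h^{m+1}$ bound.

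The main obstacle is the treatment of the pointwise-in-time acceleration term $(\rho\ddot{\bs\varepsilon}_h^u(T),\Pi\bs\Phi)_{\mc T_h}$: unlike in the elliptic/time-harmonic case handled in Sections \ref{sec:steady_state}-\ref{sec:freq}, one cannot absorb it into the duality identity because the dual problem is static whereas the primal error equation is second order in time. This forces a direct pointwise-in-$T$ estimate of $\|\ddot{\bs\varepsilon}_h^u(T)\|_\rho$ through the second bound of Theorem \ref{th:elwv_th1}, which in turn involves $\dddot{\bs e}_u$, $\ddot{\bs e}_\sigma$, and $\ddot{\bs\delta}$; this is precisely why these high-order time derivatives enter the statement of Theorem \ref{th:elwv_spconv}. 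A subtler bookkeeping task is to confirm that the initial-condition choices \eqref{eq:elwv_iniv}-\eqref{eq:elwv_HDGt0} cancel the time-boundary terms that would otherwise propagate when differentiating the error equations, and to verify that the $T$-dependence only grows as $(1+T)^2$ rather than exponentially, which is unproblematic here since no Gronwall-type feedback appears: each of the two $(1+T)$ factors comes from one independent $L^1_t$-to-$L^\infty_t$ exchange in the two applications of Theorem \ref{th:elwv_th1}.
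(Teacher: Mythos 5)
Your overall strategy --- a static elliptic duality argument at the final time combined with the energy estimates of Theorem \ref{th:elwv_th1} --- does not reach the claimed rate, and the gap is exactly at the step you flag as the ``main obstacle.'' After dividing your schematic identity by $\|\bs\varepsilon_h^u(T)\|_\Omega$, the acceleration term contributes $\|\ddot{\bs\varepsilon}_h^u(T)\|_\rho$ with \emph{no} factor of $h$ in front. The second bound of Theorem \ref{th:elwv_th1} controls this quantity by, among others, $\|\dot{\bs e}_\sigma(0)\|_{\mc A}$, $\vvvert\ddot{\bs e}_\sigma\vvvert_{\mc A,1}^{[0,T]}$ and $\vvvert\dot{\bs\delta}\vvvert_{\bs\tau^{-1},\infty}^{[0,T]}$, all of which are only $O(h^m)$ by \eqref{eq:1.3a}. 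Hence your route yields $\|\bs\varepsilon_h^u(T)\|_\Omega\lesssim h^m(1+T)$, one order short of the superconvergent $h^{m+1}$ in the statement; note that in the theorem the $\bs e_\sigma$- and $\bs\delta$-type terms all carry an explicit prefactor $h$, which your argument does not produce. The crude bound $(\rho\ddot{\bs\varepsilon}_h^u(T),\Pi\bs\Phi)_{\mc T_h}\le\|\ddot{\bs\varepsilon}_h^u(T)\|_\rho\|\Pi\bs\Phi\|_\Omega$ cannot be repaired within a static dual framework, because $\Pi\bs\Phi$ has no smallness relative to $\|\bs\varepsilon_h^u(T)\|_\Omega$ and the acceleration of the discrete error is not superconvergent.

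The paper circumvents this by taking the dual problem to be the \emph{backward wave equation} \eqref{eq:elwv_adpde} with final data $\bs\Phi(T)=0$, $\dot{\bs\Phi}(T)=\bs\varepsilon_h^u(T)$, so that $\rho\ddot{\bs\Phi}+\div\bs\Psi=0$ absorbs the time dynamics: the identity $\|\bs\varepsilon_h^u(T)\|_\rho^2=\sum_{i=1}^7 T_i$ of Proposition \ref{prop:wav_dual_id} is obtained by integrating $\dot\eta$ for $\eta(t)=(\rho\dot{\bs\Phi},\bs\varepsilon_h^u)_{\mc T_h}-(\rho\bs\Phi,\dot{\bs\varepsilon}_h^u)_{\mc T_h}$, and the acceleration error then appears only in $T_7=\int_0^T(\rho(\ddot{\bs u}-\ddot{\bs u}_h),\bs e_\Phi)_{\mc T_h}\,\mathrm dt$, paired with the \emph{projection error} $\bs e_\Phi$ of the dual variable. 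Proposition \ref{prop:elwv_tricky} then tests the momentum error equation with $\mr P_{k+1}^\rho\bs e_\Phi$ and integrates by parts in time against $\underline{\bs\Phi}$, which is where the extra power of $h$ (via $\|\mr P_{k+1}^\rho\bs e_{\underline{\Phi}}\|_K\lesssim h_K^2$ and $\|\bs\tau^{1/2}\mr P_{k+1}^\rho\bs e_{\underline{\Phi}}\|_{\partial K}\lesssim h_K$) is actually harvested. If you want to keep a duality proof of this theorem, you must adopt an evolutionary dual problem; the static one structurally loses one order on the stress-type and remainder-type contributions.
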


We next give the proofs for the above two theorems in the following two subsections respectively.

\subsection{Energy estimates}
In this subsection, we give a proof to Theorem \ref{th:elwv_th1}.
We first present two lemmas, which give the error equations when $t\ge0$ and the error equations when $t=0$, respectively.
\begin{lemma}
For all $t\ge0$, the following error equations
\begin{subequations}
\label{eq:elwv_ereq}
\begin{align}
\label{eq:elwv_ereqa}
(\mc A\bs\varepsilon_h^\sigma(t),\bs\theta)_{\mc T_h} 
+ ({\bs\varepsilon_h^u}(t),\div\bs\theta)_{\mc T_h}
-\dualpr{\bs\varepsilon_h^{\widehat u}(t),\bs\theta\bs n}_{\pp\mc T_h}
&=(\mc A\bs e_\sigma(t),\bs\theta)_{\mc T_h},\\
(\rho\ddot{\bs\varepsilon}_h^u(t),\bs w)_{\mc T_h} 
- (\div\bs\varepsilon_h^\sigma(t),\bs w)_{\mc T_h}
+\dualpr{\bs\tau\mathrm P_M({\bs\varepsilon_h^u}(t)
-\bs\varepsilon_h^{\widehat u}(t)),\bs w}_{\pp\mc T_h}
&=(\rho\ddot{\bs e}_u(t),\bs w)_{\mc T_h}\nonumber \\
\label{eq:elwv_ereqb}
&\quad +\dualpr{\bs\delta(t),\bs w}_{\pp\mc T_h},\\
\label{eq:elwv_ereqc}
\dualpr{\bs\varepsilon_h^\sigma(t)\bs n
	-\bs\tau({\bs\varepsilon_h^u}(t)
	-\bs\varepsilon_h^{\widehat u}(t)),\bs\mu}
	_{\pp\mc T_h\backslash\Gamma}
&= -\dualpr{\bs\delta(t),\bs\mu}_{\pp\mc T_h\backslash\Gamma},\\
\label{eq:elwv_ereqd}
\dualpr{\bs\varepsilon_h^{\widehat u}(t),\bs\mu}_\Gamma 
&= 0,\\
\label{eq:elwv_ereqe}
\bs\varepsilon_h^u(0) &= \Pi\bs u_0-{\bs u}_h(0),\\
\label{eq:elwv_ereqf}
\dot{\bs\varepsilon}_h^u(0) &= \bs 0,
\end{align}
\end{subequations}
hold for all $(\bs\theta,\bs w,\bs\mu)\in \bs V_h\times \bs W_h \times\bs M_h$.
\end{lemma}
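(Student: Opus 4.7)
My plan is to derive \eqref{eq:elwv_ereqa}--\eqref{eq:elwv_ereqd} by the standard "apply the projection, then subtract the scheme" recipe, and to handle \eqref{eq:elwv_ereqe}--\eqref{eq:elwv_ereqf} by unpacking the definitions of the discrete initial data.

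For equation \eqref{eq:elwv_ereqa}, I would test the constitutive law $\mathcal A\bs\sigma(t)=\bs\varepsilon(\bs u(t))$ with an arbitrary $\bs\theta\in\bs V_h$ and integrate by parts element by element. Since $\div\bs\theta\big|_K\in\mathcal P_{k-1}(K;\mathbb R^3)$ and $\bs\theta\bs n\big|_{\partial K}\in\mathcal R_k(\partial K;\mathbb R^3)$, property \eqref{eq:tauProp_a} of Theorem \ref{th:PROJ} lets me replace $\bs u(t)$ by $\Pi\bs u(t)$ in the volume term, and the $L^2$-orthogonality defining $\mathrm P_M$ lets me replace the trace of $\bs u(t)$ by $\mathrm P_M\bs u(t)$ in the boundary term. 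Subtracting \eqref{eq:elwv_HDGa} then gives \eqref{eq:elwv_ereqa} with the right-hand side $(\mathcal A\bs e_\sigma(t),\bs\theta)_{\mathcal T_h}$.

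For equation \eqref{eq:elwv_ereqb}, I would test $\rho\ddot{\bs u}(t)-\div\bs\sigma(t)=\bs f(t)$ with $\bs w\in\bs W_h$; then property \eqref{eq:tauProp_b} applied element by element transforms $-(\div\bs\sigma(t),\bs w)$ into $-(\div\Pi\bs\sigma(t),\bs w)_{\mathcal T_h}$ plus the terms $\langle\bs\tau\mathrm P_M\bs e_u(t),\bs w\rangle_{\partial\mathcal T_h}-\langle\bs\delta(t),\bs w\rangle_{\partial\mathcal T_h}$, while $(\rho\ddot{\bs u},\bs w)$ becomes $(\rho\Pi\ddot{\bs u},\bs w)$ plus the $(\rho\ddot{\bs e}_u,\bs w)$ remainder. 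Subtracting \eqref{eq:elwv_HDGb} and using the elementary identity $\mathrm P_M\bs e_u-\mathrm P_M(\bs u_h-\widehat{\bs u}_h)=\mathrm P_M(\bs\varepsilon_h^u-\bs\varepsilon_h^{\widehat u})$ (which holds because $\widehat{\bs u}_h\in\bs M_h$ is fixed by $\mathrm P_M$) delivers \eqref{eq:elwv_ereqb}. Equation \eqref{eq:elwv_ereqc} comes directly from \eqref{eq:tauProp_c} applied face by face to the true solution, followed by subtraction of \eqref{eq:elwv_HDGc}; and \eqref{eq:elwv_ereqd} is just the difference of the two Dirichlet conditions. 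The only delicate point in this stretch is being consistent about when the projection $\mathrm P_M$ can be inserted or removed, which is handled by observing that the test objects lie in $\bs M_h$ (or have traces that do) and by using that $\bs\tau$ acts as an endomorphism of $\bs M_h$.

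The initial conditions require a separate short argument. The identity \eqref{eq:elwv_ereqe} is immediate from $\bs u(0)=\bs u_0$ and the definition of $\bs\varepsilon_h^u$. For \eqref{eq:elwv_ereqf} I would differentiate the constitutive equation in time, obtaining $\dot{\bs\sigma}(0)=\mathcal A^{-1}\bs\varepsilon(\dot{\bs u}(0))=\mathcal A^{-1}\bs\varepsilon(\bs v_0)$, and then note that the second component of $\Pi(\dot{\bs\sigma}(0),\dot{\bs u}(0);\bs\tau)=\Pi(\mathcal A^{-1}\bs\varepsilon(\bs v_0),\bs v_0;\bs\tau)$ is precisely $\dot{\bs u}_h(0)$ by the defining relation \eqref{eq:elwv_iniv}. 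Hence $\Pi\dot{\bs u}(0)=\dot{\bs u}_h(0)$, giving $\dot{\bs\varepsilon}_h^u(0)=\bs 0$. No step in this proof looks like a real obstacle; the main danger is bookkeeping, in particular keeping straight the two uses of $\mathrm P_M$ (as a trace projector for $\bs u$ vs.\ as the identity on $\widehat{\bs u}_h$) when shuffling the stabilization terms.
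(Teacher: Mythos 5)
Your proposal is correct and follows exactly the route the paper intends: the paper's own proof is the one-line instruction to combine \eqref{eq:elwv_pde}, \eqref{eq:elwv_HDG}, \eqref{eq:elwv_iniv}, \eqref{eq:elwv_HDGt0} and Theorem \ref{th:PROJ}, which is precisely the ``apply the projection properties, subtract the scheme'' computation you carry out, including the key observation for \eqref{eq:elwv_ereqf} that $\dot{\bs\sigma}(0)=\mc A^{-1}\bs\varepsilon(\bs v_0)$ makes $\Pi\dot{\bs u}(0)$ coincide with $\dot{\bs u}_h(0)$ as defined in \eqref{eq:elwv_iniv}. Your bookkeeping of $\mathrm P_M$ (inserting it via $L^2$-orthogonality against test objects in $\bs M_h$ and removing it on $\widehat{\bs u}_h$) is exactly the right justification.
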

\begin{proof}
Use \eqref{eq:elwv_pde}, \eqref{eq:elwv_HDG}, \eqref{eq:elwv_iniv}, \eqref{eq:elwv_HDGt0}, and Theorem \ref{th:PROJ}.
\end{proof}

\begin{lemma}
The following error equations
\begin{subequations}
\label{eq:elwv_ereqt0}
\begin{align}
\label{eq:elwv_ereqt0a}
(\mc A\bs\varepsilon_h^\sigma(0),\bs\theta)_{\mc T_h} 
+ ({\bs\varepsilon_h^u}(0),\div\bs\theta)_{\mc T_h}
-\dualpr{\bs\varepsilon_h^{\widehat u}(0),\bs\theta\bs n}_{\pp\mc T_h}
&=(\mc A\bs e_\sigma(0),\bs\theta)_{\mc T_h},\\
\label{eq:elwv_ereqt0b}
- (\div\bs\varepsilon_h^\sigma(0),\bs w)_{\mc T_h}
+\dualpr{\bs\tau\mathrm P_M({\bs\varepsilon_h^u}(0)
	-\bs\varepsilon_h^{\widehat u}(0)),\bs w}_{\pp\mc T_h}
&=\dualpr{\bs\delta(0),\bs w}_{\pp\mc T_h},\\
\label{eq:elwv_ereqt0c}
\dualpr{\bs\varepsilon_h^\sigma(0)\bs n
-\bs\tau({\bs\varepsilon_h^u}(0)
-\bs\varepsilon_h^{\widehat u}(0)),\bs\mu}_{\pp\mc T_h\backslash\Gamma}
&= -\dualpr{\bs\delta(0),\bs\mu}_{\pp\mc T_h\backslash\Gamma},\\
\label{eq:elwv_ereqt0d}
\dualpr{\bs\varepsilon_h^{\widehat u}(0),\bs\mu}_\Gamma 
&= 0,
\end{align}
\end{subequations}
hold for all $(\bs\theta,\bs w,\bs\mu)\in \bs V_h\times \bs W_h \times\bs M_h$.
\end{lemma}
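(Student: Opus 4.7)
The plan mirrors the derivation of the error equations for the steady-state HDG+ method in Proposition \ref{prop:energy_id}, adapted to the elliptic problem \eqref{eq:elwv_pdet0} obtained by evaluating \eqref{eq:elwv_pde} at $t=0$. Since $\mc A\bs\sigma(0)=\bs\varepsilon(\bs u(0))$ pointwise, we have $\mc A^{-1}\bs\varepsilon(\bs u_0)=\bs\sigma(0)$, so the right-hand side of \eqref{eq:elwv_HDGt0b} becomes $(-\div\bs\sigma(0),\bs w)_{\mc T_h}$. The strategy is to apply the projection $\Pi$ of Theorem \ref{th:PROJ} element-wise to $(\bs\sigma(0),\bs u(0))$ with stabilization $\bs\tau$, write down the three projection identities \eqref{eq:tauProp_a}--\eqref{eq:tauProp_c}, sum over elements, and subtract \eqref{eq:elwv_HDGt0} equation-by-equation.

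For \eqref{eq:elwv_ereqt0a}, I would start from $(\mc A\bs\sigma(0),\bs\theta)_{\mc T_h}=(\bs\varepsilon(\bs u(0)),\bs\theta)_{\mc T_h}$ and use element-wise integration by parts (Betti's formula) to obtain
\[
(\bs\varepsilon(\bs u(0)),\bs\theta)_{\mc T_h}=-(\bs u(0),\div\bs\theta)_{\mc T_h}+\dualpr{\bs u(0),\bs\theta\bs n}_{\pp\mc T_h}.
\]
Subtracting \eqref{eq:elwv_HDGt0a} and adding $(\mc A\bs e_\sigma(0),\bs\theta)_{\mc T_h}$ to both sides produces the desired identity modulo two leftover terms, $(\bs e_u(0),\div\bs\theta)_{\mc T_h}$ and $\dualpr{\bs u(0)-\mr P_M\bs u(0),\bs\theta\bs n}_{\pp\mc T_h}$, which vanish respectively by \eqref{eq:tauProp_a} (since $\div\bs\theta|_K\in\mc P_{k-1}(K;\R^3)$) and by definition of $\mr P_M$ (since $\bs\theta\bs n|_{\pp K}\in\mc R_k(\pp K;\R^3)$).

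Equations \eqref{eq:elwv_ereqt0b} and \eqref{eq:elwv_ereqt0c} are then immediate: sum \eqref{eq:tauProp_b} and \eqref{eq:tauProp_c} over elements, subtract \eqref{eq:elwv_HDGt0b} and \eqref{eq:elwv_HDGt0c} respectively, and use the algebraic identity
\[
\mr P_M\bigl(\Pi\bs u(0)-\bs u(0)-\bs u_h(0)+\widehat{\bs u}_h(0)\bigr)=\mr P_M\bigl(\bs\varepsilon_h^u(0)-\bs\varepsilon_h^{\widehat u}(0)\bigr),
\]
which follows from idempotency of $\mr P_M$, from $\widehat{\bs u}_h(0)\in\bs M_h$, and from $\mr P_M(\mr P_M\bs u(0))=\mr P_M\bs u(0)$. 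For \eqref{eq:elwv_ereqt0c} one also uses that $\bs\tau$ is piecewise constant and symmetric, so $\bs\tau\bs\mu\in\bs M_h$ for $\bs\mu\in\bs M_h$, which allows $\mr P_M$ to be inserted or removed freely in the stabilization brackets. The contribution $\dualpr{\bs\sigma(0)\bs n,\bs\mu}_{\pp\mc T_h\backslash\Gamma}$ cancels on interior faces because $\bs\sigma(0)\in H(\Omega,\div;\Rsym)$ has single-valued normal trace. Finally, \eqref{eq:elwv_ereqt0d} follows from $\bs u(0)|_\Gamma=\bs g(0)$, \eqref{eq:elwv_HDGt0d}, and the identity $\dualpr{\bs u(0),\bs\mu}_\Gamma=\dualpr{\mr P_M\bs u(0),\bs\mu}_\Gamma$ for $\bs\mu\in\bs M_h$.

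No deep difficulty is expected; the proof is a bookkeeping exercise in sign tracking and projection manipulation. The only subtlety is the consistent insertion of $\mr P_M$ around the hybrid term $\bs u(0)-\widehat{\bs u}_h(0)$, which is handled uniformly by the observation above. The argument is completely parallel to the derivation of the steady-state error equations \eqref{eq:st_ereq}, the sole difference being the source term, which disappears after using $\mc A^{-1}\bs\varepsilon(\bs u_0)=\bs\sigma(0)$.
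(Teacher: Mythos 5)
Your proposal is correct and follows exactly the route the paper intends: its proof of this lemma is the one-line instruction to combine \eqref{eq:elwv_pdet0}, \eqref{eq:elwv_HDGt0}, and Theorem \ref{th:PROJ}, i.e., precisely the consistency-plus-projection subtraction you carry out, in parallel with the steady-state error equations \eqref{eq:st_ereq}. Your bookkeeping of the $\mr P_M$ insertions, the vanishing of $(\bs e_u(0),\div\bs\theta)_{\mc T_h}$ via \eqref{eq:tauProp_a}, and the identification $\mc A^{-1}\bs\varepsilon(\bs u_0)=\bs\sigma(0)$ are all accurate.
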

\begin{proof}
Use \eqref{eq:elwv_pdet0}, \eqref{eq:elwv_HDGt0}, and Theorem \ref{th:PROJ}.
\end{proof}

The next proposition gives estimates to the error terms when $t=0$.

\begin{proposition}\label{prop:ene_id_t0}
The following estimates hold:
\begin{align}
\label{eq:prp_enid_t0_a}
\|\bs\varepsilon_h^\sigma(0)\|_{\mc A}^2
+\|\mathrm P_M(\bs\varepsilon_h^u(0)-\bs\varepsilon_h^{\widehat{u}}(0))\|_{\bs\tau}^2
&\le 
\|\bs e_\sigma(0)\|_{\mc A}^2
+\|\bs\delta(0)\|_{\bs\tau^{-1}}^2,\\
\label{eq:prp_enid_t0_b}
\|\dot{\bs\varepsilon}_h^\sigma(0)\|_{\mc A}^2
+\|\mathrm P_M(\dot{\bs\varepsilon}_h^u(0)-\dot{\bs\varepsilon}_h^{\widehat{u}}(0))\|_{\bs\tau}^2
&\le 
\|\dot{\bs e}_\sigma(0)\|_{\mc A}^2
+\|\dot{\bs\delta}(0)\|_{\bs\tau^{-1}}^2,\\
\label{eq:prp_enid_t0_c}
\|\ddot{\bs\varepsilon}_h^u(0)\|_\rho
&\le \|\ddot{\bs e}_u(0)\|_\rho.
\end{align}
\end{proposition}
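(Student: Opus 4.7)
The three estimates share the common feature that the error system at $t=0$ is structurally close to the steady-state HDG+ error system analyzed in Proposition~\ref{prop:energy_id}, so the same energy machinery applies with minor adjustments. My plan is to prove \eqref{eq:prp_enid_t0_a} first, then \eqref{eq:prp_enid_t0_c} (which is just a projection property), and finally \eqref{eq:prp_enid_t0_b} by differentiating the dynamic error equations in $t$ and exploiting the fact that $\dot{\bs\varepsilon}_h^u(0) = \bs 0$.

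For \eqref{eq:prp_enid_t0_a}, I observe that \eqref{eq:elwv_ereqt0} is formally identical to the steady-state error system \eqref{eq:st_ereq} (with trivial boundary data on $\Gamma$). Testing \eqref{eq:elwv_ereqt0a} with $\bs\theta = \bs\varepsilon_h^\sigma(0)$, \eqref{eq:elwv_ereqt0b} with $\bs w = \bs\varepsilon_h^u(0)$, and using \eqref{eq:elwv_ereqt0c} together with \eqref{eq:elwv_ereqt0d} with test $\bs\mu = \bs\varepsilon_h^{\widehat u}(0)$, then summing the three, the $(\bs\varepsilon_h^u(0),\div\bs\varepsilon_h^\sigma(0))$ and the $\bs\tau\mathrm P_M$ cross terms cancel as in the proof of Proposition~\ref{prop:energy_id}. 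This reproduces the energy identity \eqref{eq:st_id} at $t=0$, and Cauchy-Schwarz plus Young's inequality give \eqref{eq:prp_enid_t0_a}.

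For \eqref{eq:prp_enid_t0_c}, I would simply subtract the static equation \eqref{eq:elwv_ereqt0b} from the dynamic equation \eqref{eq:elwv_ereqb} evaluated at $t=0$. All other terms match, yielding
\begin{equation*}
(\rho\ddot{\bs\varepsilon}_h^u(0),\bs w)_{\mc T_h} = (\rho\ddot{\bs e}_u(0),\bs w)_{\mc T_h} \qquad \forall\, \bs w\in \bs W_h,
\end{equation*}
so that $\ddot{\bs\varepsilon}_h^u(0)$ is the $\rho$-weighted $L^2$ projection of $\ddot{\bs e}_u(0)$ onto $\bs W_h$. Testing with $\bs w = \ddot{\bs\varepsilon}_h^u(0)\in \bs W_h$ and Cauchy-Schwarz yields \eqref{eq:prp_enid_t0_c}.

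The nontrivial case is \eqref{eq:prp_enid_t0_b}: I would differentiate \eqref{eq:elwv_ereqa}, \eqref{eq:elwv_ereqc}, \eqref{eq:elwv_ereqd} in $t$, evaluate at $t=0$, and use $\dot{\bs\varepsilon}_h^u(0)=\bs 0$ from \eqref{eq:elwv_ereqf} to drop the $(\dot{\bs\varepsilon}_h^u(0),\div\bs\theta)$ term in the first equation and to replace $\mathrm P_M(\dot{\bs\varepsilon}_h^u(0) - \dot{\bs\varepsilon}_h^{\widehat u}(0))$ by $-\dot{\bs\varepsilon}_h^{\widehat u}(0)$ in the third. Testing with $\bs\theta = \dot{\bs\varepsilon}_h^\sigma(0)$ and $\bs\mu = \dot{\bs\varepsilon}_h^{\widehat u}(0)$ and summing produces the identity
\begin{equation*}
\|\dot{\bs\varepsilon}_h^\sigma(0)\|_{\mc A}^2 + \|\dot{\bs\varepsilon}_h^{\widehat u}(0)\|_{\bs\tau}^2
= (\mc A\dot{\bs e}_\sigma(0),\dot{\bs\varepsilon}_h^\sigma(0))_{\mc T_h}
- \dualpr{\dot{\bs\delta}(0),\dot{\bs\varepsilon}_h^{\widehat u}(0)}_{\pp\mc T_h},
\end{equation*}
from which \eqref{eq:prp_enid_t0_b} follows by Young's inequality. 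The conceptual point I expect to be the main obstacle, and which I would emphasize in the write-up, is that one must \emph{not} use the time-differentiated version of \eqref{eq:elwv_ereqb}: that would introduce the uncontrolled terms $\rho\dddot{\bs\varepsilon}_h^u(0)$ and $\rho\dddot{\bs e}_u(0)$. Because of $\dot{\bs\varepsilon}_h^u(0)=\bs 0$, testing that equation with $\dot{\bs\varepsilon}_h^u(0)$ only gives $0=0$, so the $\rho$-balance simply drops out of the energy identity. This is precisely where the choice \eqref{eq:elwv_HDGt0} of initial discrete displacement, via a steady-state HDG+ solve compatible with the initial datum, pays off.
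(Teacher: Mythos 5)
Your proof is correct and follows essentially the same route as the paper's: the same energy identity for the static error system at $t=0$ for \eqref{eq:prp_enid_t0_a}, the same time-differentiated identity exploiting $\dot{\bs\varepsilon}_h^u(0)=\bs 0$ for \eqref{eq:prp_enid_t0_b}, and the same subtraction of \eqref{eq:elwv_ereqt0b} from \eqref{eq:elwv_ereqb} at $t=0$ for \eqref{eq:prp_enid_t0_c}. The only cosmetic difference is that the paper formally differentiates and tests the second error equation with $\dot{\bs\varepsilon}_h^u$ as well, which --- as you yourself observe --- contributes only $0=0$ at $t=0$, so omitting it changes nothing.
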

\begin{proof}
Taking $\bs\theta = \bs\varepsilon_h^\sigma(0)$, $\bs w=\bs\varepsilon_h^u(0)$ and $\bs\mu = \bs\varepsilon_h^{\widehat u}(0)$
in the error equations \eqref{eq:elwv_ereqt0} and adding the equations, we have
\begin{align*}
\|\bs\varepsilon_h^\sigma(0)\|_{\mc A}^2
+\|\mathrm P_M(\bs\varepsilon_h^u(0)-\bs\varepsilon_h^{\widehat u}(0))\|_{\bs\tau}^2
=(\mc A\bs e_\sigma(0),\bs\varepsilon_h^\sigma(0))_{\mc T_h}
+\dualpr{\bs\delta(0),\bs\varepsilon_h^u(0) 
	- \bs\varepsilon_h^{\widehat u}(0)}_{\pp\mc T_h}.
\end{align*}
The first estimate \eqref{eq:prp_enid_t0_a} then follows from the latter identity.

Consider the error equations \eqref{eq:elwv_ereq}. We 
take the first order derivative of \eqref{eq:elwv_ereqa} - \eqref{eq:elwv_ereqc} and test the equations with
${\bs\theta} = \dot{\bs\varepsilon}_h^\sigma$, 
$\bs w=\dot{\bs\varepsilon}_h^u$,  
$\bs\mu = \dot{\bs\varepsilon}_h^{\widehat u}$. We next add the equations, evaluate them at $t=0$, and use the fact that $\dot{\bs\varepsilon}_h^u(0)=0$. Then
\begin{align*}
\|\dot{\bs\varepsilon}_h^\sigma(0)\|_{\mc A}^2
+\|\mathrm P_M(\dot{\bs\varepsilon}_h^u(0)
	-\dot{\bs\varepsilon}_h^{\widehat u}(0))\|_{\bs\tau}^2
=(\mc A\dot{\bs e}_\sigma(0),\dot{\bs\varepsilon}_h^\sigma(0))_{\mc T_h}
+\dualpr{\dot{\bs\delta}(0),\dot{\bs\varepsilon}_h^u (0)
	- \dot{\bs\varepsilon}_h^{\widehat u}(0)}_{\pp\mc T_h},
\end{align*}
from which the second estimate \eqref{eq:prp_enid_t0_b} follows.

Finally, taking $t=0$ in \eqref{eq:elwv_ereqb} and subtracting \eqref{eq:elwv_ereqt0b}, then taking $\bs w=\ddot{\bs\varepsilon}_h^u(0)$, we have
\begin{align*}
(\ddot{\bs\varepsilon}_h^u(0),\ddot{\bs\varepsilon}_h^u(0))_\rho
=(\ddot{\bs e}_u(0),\ddot{\bs\varepsilon}_h^u(0))_\rho,
\end{align*}
which implies \eqref{eq:prp_enid_t0_c}.
\end{proof}

Note that the boundary remainder operator 
$\mathrm R$ defined in Theorem \ref{th:PROJ} is linear, thus commuting with the time derivative
\begin{align*}
\dot{\bs\delta}(t) = 
\mathrm R(\dot{\bs\sigma}(t),\dot{\bs u}(t);\bs\tau).
\end{align*}
This commutativity holds for the projection $\Pi$ as well for similar reasons.

\begin{proposition}\label{prop:ene_id_tge0}
For $t\ge0$, we have
\begin{align}
&\frac{1}{2}\frac{\mathrm d}{\mathrm dt}
\left(
\|\bs\varepsilon_h^\sigma(t)\|_{\mc A}^2
+\|\mathrm P_M(\bs\varepsilon_h^u(t)
	-\bs\varepsilon_h^{\widehat{u}}(t))\|_{\bs\tau}^2
+\|\dot{\bs\varepsilon}_h^u(t)\|_\rho^2
\right)\nonumber\\
\label{eq:prp_id_tge0_a}
&\qquad\qquad =(\rho\ddot{\bs e}_u(t),\dot{\bs\varepsilon}_h^u(t))
+(\mc A\dot{\bs e}_\sigma(t),\bs\varepsilon_h^\sigma(t))
+\dualpr{\bs\delta(t),\dot{\bs\varepsilon}_h^u(t)
	-\dot{\bs\varepsilon}_h^{\widehat u}(t)},\\
&\frac{1}{2}\frac{\mathrm d}{\mathrm dt}
\left(
\|\dot{\bs\varepsilon}_h^\sigma(t)\|_{\mc A}^2
+\|\mathrm P_M(\dot{\bs\varepsilon}_h^u(t)
	-\dot{\bs\varepsilon}_h^{\widehat{u}}(t))\|_{\bs\tau}^2
+\|\ddot{\bs\varepsilon}_h^u(t)\|_\rho^2
\right)\nonumber\\
\label{eq:prp_id_tge0_b}
&\qquad\qquad=(\rho\dddot{\bs e}_u(t),\ddot{\bs\varepsilon}_h^u(t))
+(\mc A\ddot{\bs e}_\sigma(t),\dot{\bs\varepsilon}_h^\sigma(t))
+\dualpr{\dot{\bs\delta}(t),\ddot{\bs\varepsilon}_h^u(t)
	-\ddot{\bs\varepsilon}_h^{\widehat u}(t)}.
\end{align}
\end{proposition}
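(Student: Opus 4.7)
The plan is to mimic the standard energy argument for wave equations, adapted to the HDG setting: differentiate some of the error equations \eqref{eq:elwv_ereq} in time and test with well-chosen combinations of $\bs\varepsilon_h^\sigma$, $\dot{\bs\varepsilon}_h^u$ and $\dot{\bs\varepsilon}_h^{\widehat u}$, then add to produce exactly the three time-derivative-of-norm-squared terms on the left-hand side of \eqref{eq:prp_id_tge0_a}. Concretely, I would differentiate \eqref{eq:elwv_ereqa} in $t$ and test with $\bs\theta=\bs\varepsilon_h^\sigma(t)$ (this yields $(\mc A\dot{\bs\varepsilon}_h^\sigma,\bs\varepsilon_h^\sigma)_{\mc T_h}=\tfrac12\tfrac{\mathrm d}{\mathrm dt}\|\bs\varepsilon_h^\sigma\|_{\mc A}^2$ plus boundary/volume coupling terms), test \eqref{eq:elwv_ereqb} with $\bs w=\dot{\bs\varepsilon}_h^u(t)$ (giving $\tfrac12\tfrac{\mathrm d}{\mathrm dt}\|\dot{\bs\varepsilon}_h^u\|_\rho^2$ plus coupling), and use \eqref{eq:elwv_ereqc}-\eqref{eq:elwv_ereqd} with $\bs\mu=\dot{\bs\varepsilon}_h^{\widehat u}(t)$ (which is admissible on $\pp\mc T_h$ rather than just $\pp\mc T_h\setminus\Gamma$ because \eqref{eq:elwv_ereqd} forces $\dot{\bs\varepsilon}_h^{\widehat u}(t)$ to vanish on $\Gamma$).

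When these three identities are added, the $(\dot{\bs\varepsilon}_h^u,\div\bs\varepsilon_h^\sigma)_{\mc T_h}$ and $(\div\bs\varepsilon_h^\sigma,\dot{\bs\varepsilon}_h^u)_{\mc T_h}$ terms cancel, and the remaining boundary terms reorganize into $\langle \bs\tau\mr P_M(\bs\varepsilon_h^u-\bs\varepsilon_h^{\widehat u}),\dot{\bs\varepsilon}_h^u-\dot{\bs\varepsilon}_h^{\widehat u}\rangle_{\pp\mc T_h}$, which equals $\tfrac12\tfrac{\mathrm d}{\mathrm dt}\|\mr P_M(\bs\varepsilon_h^u-\bs\varepsilon_h^{\widehat u})\|_{\bs\tau}^2$. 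The self-adjointness $\langle \bs\tau\mr P_M\bs\mu,\bs\eta\rangle_{\pp\mc T_h}=\langle\bs\mu,\bs\tau\mr P_M\bs\eta\rangle_{\pp\mc T_h}$, valid because $\bs\tau$ is piecewise constant and hence $\bs\tau\mr P_M$ is an endomorphism of $\bs M_h$, is what permits identifying this cross term with a time derivative. The source terms collapse to $(\mc A\dot{\bs e}_\sigma,\bs\varepsilon_h^\sigma)_{\mc T_h}+(\rho\ddot{\bs e}_u,\dot{\bs\varepsilon}_h^u)_{\mc T_h}+\langle\bs\delta,\dot{\bs\varepsilon}_h^u-\dot{\bs\varepsilon}_h^{\widehat u}\rangle_{\pp\mc T_h}$, matching the right-hand side of \eqref{eq:prp_id_tge0_a}.

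For the second identity \eqref{eq:prp_id_tge0_b}, I would first differentiate all four error equations \eqref{eq:elwv_ereqa}-\eqref{eq:elwv_ereqd} once in $t$, producing an identical system with every occurrence of $\bs\varepsilon_h^\sigma$, $\bs\varepsilon_h^u$, $\bs\varepsilon_h^{\widehat u}$, $\bs e_\sigma$, $\bs e_u$, $\bs\delta$ replaced by its time derivative (this uses that both $\Pi$ and $\mathrm R$ are linear in their arguments, hence commute with $\mathrm d/\mathrm dt$; the paper already notes this just before the statement). Applying the previous procedure to the differentiated system then gives \eqref{eq:prp_id_tge0_b} verbatim, with one extra dot on everything.

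The main things to watch are bookkeeping rather than any substantive obstacle: (i) tracking that all tests are legal members of the discrete spaces, in particular that $\dot{\bs\varepsilon}_h^{\widehat u}\in\bs M_h$ vanishes on $\Gamma$ so equation \eqref{eq:elwv_ereqc} can be used with the full boundary integral; (ii) moving the time derivative past $\mr P_M$ and the $\langle\bs\tau\cdot,\cdot\rangle$ bracket in the cross term, which is exactly where the symmetry property above is invoked. Everything else is a direct cancellation of two interior volume terms and a rearrangement of the remaining boundary terms into the stated sources.
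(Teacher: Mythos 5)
Your proposal is correct and follows essentially the same route as the paper: differentiate \eqref{eq:elwv_ereqa} (resp.\ \eqref{eq:elwv_ereqd}) in time, test with $\bs\varepsilon_h^\sigma$, $\dot{\bs\varepsilon}_h^u$, $\dot{\bs\varepsilon}_h^{\widehat u}$, add, and repeat with one extra time derivative for the second identity. The two details you flag — extending the flux equation to all of $\pp\mc T_h$ via the vanishing of $\dot{\bs\varepsilon}_h^{\widehat u}$ on $\Gamma$, and using that $\bs\tau\mr P_M$ is a symmetric endomorphism of $\bs M_h$ to turn the cross term into a time derivative — are exactly the points the paper's (terser) proof relies on.
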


\begin{proof}
Taking the first order derivative of \eqref{eq:elwv_ereqa} and testing it with $\bs\theta=\bs\varepsilon_h^\sigma(t)$, then choosing $\bs w=\dot{\bs\varepsilon}_h^u(t)$ in \eqref{eq:elwv_ereqb} and $\bs\mu=\dot{\bs\varepsilon}_h^{\widehat{u}}(t)$ in \eqref{eq:elwv_ereqc}, and finally taking the first order derivative of \eqref{eq:elwv_ereqd} then adding the equations, we obtain \eqref{eq:prp_id_tge0_a}.

Taking the second order derivative of \eqref{eq:elwv_ereqa} and testing it with $\bs\theta=
\dot{\bs\varepsilon}_h^\sigma(t)$, then taking the first order derivative of
\eqref{eq:elwv_ereqb} and testing it with $\bs w=\ddot{\bs\varepsilon}_h^u(t)$, taking the first order derivative of \eqref{eq:elwv_ereqc} and testing it with $\bs\mu = \ddot{\bs\varepsilon}_h^{\widehat{u}}(t)$, and finally taking the second order derivative of \eqref{eq:elwv_ereqd} and adding the equations, we obtain \eqref{eq:prp_id_tge0_b}.
\end{proof}

The final ingredient we need is a Gr\"{o}nwall type inequality.

\begin{lemma}\label{lm:int_ineq}
Suppose $\phi,\beta,l$ are continuous and positive functions defined on $[0,\infty)$ and $r$ is a constant. If
\begin{align*}
\phi^2(t)\le r + 2\int_0^t\phi(s)\beta(s)\mathrm d s
+\phi(t)l(t),
\end{align*}
then
\begin{align*}
\phi^2(t)\le 2 r
+\left(
2\int_0^t\beta(s)\mathrm ds
+\sup_{s\in[0,t]}l(s)
\right)^2.
\end{align*}
\end{lemma}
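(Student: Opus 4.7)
The plan is to convert the implicit integral inequality into a closed-form algebraic inequality for the non-decreasing envelope $M(t) := \sup_{s\in[0,t]}\phi(s)$. Writing $B(t) := \int_0^t \beta(s)\,ds$ and $L(t) := \sup_{s\in[0,t]} l(s)$, both of which are non-decreasing, and using the hypothesis at any $s\in[0,t]$ together with the nonnegativity of $\phi, \beta, l$, I would bound
\[
\phi^2(s) \,\le\, r + 2\,M(t)\,B(t) + \phi(s)\,L(t),
\]
by using $\phi(\tau)\le M(t)$ in the integral and $l(s)\le L(t)$ in the last term. The right-hand side now depends on $s$ only through the factor $\phi(s)$, and taking the supremum over $s\in[0,t]$ (which commutes with squaring since $\phi\ge 0$) yields the quadratic inequality
\[
M^2(t) - (2B(t)+L(t))\,M(t) - r \,\le\, 0.
\]

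The second step is to solve this quadratic in $M(t)$. Since $M(t)\ge 0$ and $r\ge 0$ (as will be the case in all applications, where $r$ is a sum of squared norms), the smaller root is non-positive and so the constraint reduces to
\[
M(t) \,\le\, \tfrac12\bigl(2B(t)+L(t)\bigr) + \tfrac12\sqrt{(2B(t)+L(t))^2 + 4r}.
\]
Squaring and applying the elementary inequality $(a+b)^2 \le 2a^2 + 2b^2$ eliminates the cross term and produces exactly
\[
M^2(t) \,\le\, 2r + (2B(t)+L(t))^2,
\]
from which the lemma follows because $\phi^2(t)\le M^2(t)$.

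There is no real obstacle in this lemma; the only point that requires care is that one must replace $B$, $L$, and the envelope of $\phi$ at argument $s$ by their (larger) values at $t$ \emph{before} taking the supremum, so that the inequality becomes a genuine algebraic relation in the single unknown $M(t)$. The sharp constant $2$ in front of $r$ is obtained by invoking $(a+b)^2\le 2(a^2+b^2)$; any coarser splitting of the square of a sum would give a worse constant while leaving the rest of the argument unchanged. This is the classical Bihari/Grönwall-type trick adapted to a mixed integral-plus-linear perturbation.
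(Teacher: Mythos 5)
Your proof is correct and follows essentially the same route as the paper's: both evaluate the hypothesis at the point where $\phi$ attains its maximum on $[0,t]$ and reduce to an algebraic inequality in that maximum, the only difference being that the paper absorbs the cross term directly via Young's inequality $ab\le\tfrac12(a^2+b^2)$, whereas you solve the resulting quadratic explicitly and then invoke $(a+b)^2\le 2(a^2+b^2)$. The one caveat is that your quadratic-root step uses $r\ge 0$, which is not among the lemma's stated hypotheses (the paper's one-line Young argument needs no sign condition on $r$), though $r$ is indeed a sum of squared norms in every application in the paper.
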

\begin{proof}
Consider the interval $[0,t]$ and let $\phi(t^*)$ maximize $\phi$ in the interval. Then we have
\begin{align*}
\phi^2(t^*)
&\le r+\phi(t^*)\left(2\int_0^t\beta(s)\mathrm ds + \sup_{s\in[0,t]}l(s)\right)\\
&\le r + \frac{1}{2}
\left(
\phi^2(t^*)+\left(2\int_0^t\beta(s)\mathrm ds + \sup_{s\in[0,t]}l(s)\right)^2
\right).
\end{align*}
\end{proof}

With Proposition \ref{prop:ene_id_t0}, Proposition \ref{prop:ene_id_tge0} and Lemma \ref{lm:int_ineq}, we are ready to prove Theorem \ref{th:elwv_th1}.

\begin{proof}[Proof of Theorem \ref{th:elwv_th1}]
Integrating \eqref{eq:prp_id_tge0_a}
from $0$ to $T$, we have
\begin{align*}
\frac{1}{2}
\bigg(&
\|\bs\varepsilon_h^\sigma(t)\|_{\mc A}^2
+\|\mathrm P_M(\bs\varepsilon_h^u(t)
	-\bs\varepsilon_h^{\widehat{u}}(t))\|_{\bs\tau}^2
+\|\dot{\bs\varepsilon}_h^u(t)\|_\rho^2
\bigg)\bigg|_{t=0}^{t=T}\\
=&
\dualpr{\bs\delta(t),\mathrm P_M(\bs\varepsilon_h^u(t)
	-\bs\varepsilon_h^{\widehat{u}}(t))}_{\pp\mc T_h}\,\bigg|_{t=0}^{t=T}\\
&+\int_0^T \left(
\left(\rho\ddot{\bs e}_u(t),\dot{\bs\varepsilon}_h^u(t)\right)_{\mc T_h}
+\left(\mc A\dot{\bs e}_\sigma(t),\bs\varepsilon_h^\sigma(t)\right)_{\mc T_h}
-\dualpr{\dot{\bs\delta}(t),\mathrm P_M(\bs\varepsilon_h^u(t)-\bs\varepsilon_h^{\widehat{u}}(t))}_{\pp\mc T_h}
\right)\mathrm dt\\
	\le & 
\|\bs\delta(T)\|_{\bs\tau^{-1}}
\|\mathrm P_M(\bs\varepsilon_h^u(T)
	-\bs\varepsilon_h^{\widehat{u}}(T))\|_{\bs\tau}
+\|\bs\delta(0)\|_{\bs\tau^{-1}}
\left(\|\bs e_\sigma(0)\|_{\mc A}^2
	+\|\bs\delta(0)\|_{\bs\tau^{-1}}^2\right)^{1/2}\\
& +
\int_0^T
\left(\|\dot{\bs\varepsilon}_h^u(t)\|_\rho^2
+\|\bs\varepsilon_h^\sigma(t)\|_{\mc A}^2
+\|\mathrm P_M(\bs\varepsilon_h^u(t)
	-\bs\varepsilon_h^{\widehat{u}}(t))\|_{\bs\tau}^2
\right)^{1/2}\\
& \qquad \left(
\|\ddot{\bs e}_u(t)\|_\rho^2+\|\dot{\bs e}_\sigma(t)\|_{\mc A}^2
+\|\dot{\bs\delta}(t)\|_{\bs\tau^{-1}}^2 
\right)^{1/2}\mathrm dt,
\end{align*}
where we used \eqref{eq:prp_enid_t0_a} to estimate $\|\mr P_M(\bs\varepsilon_h^u(0)
-\bs\varepsilon_h^{\widehat{u}}(0))\|_{\bs\tau}$ in the last step.

Now we define
\begin{align*}
\phi^2(t)&:=\|\dot{\bs\varepsilon}_h^u(t)\|_\rho^2+
\|\bs\varepsilon_h^\sigma(t)\|_{\mc A}^2
+\|\mathrm P_M(\bs\varepsilon_h^u(t)
	-\bs\varepsilon_h^{\widehat{u}}(t))\|_{\bs\tau}^2,\\
\beta^2(t)&:=
\|\ddot{\bs e}_u(t)\|_\rho^2+\|\dot{\bs e}_\sigma(t)\|_{\mc A}^2
+\|\dot{\bs\delta}(t)\|_{\bs\tau^{-1}}^2,\\
l(t)&:=2\|\bs\delta(t)\|_{\bs\tau^{-1}},\\
r&:=2\|\bs e_\sigma(0)\|_{\mc A}^2+3\|\bs\delta(0)\|_{\bs\tau^{-1}}^2.
\end{align*}
Note that by \eqref{eq:prp_enid_t0_a} and \eqref{eq:elwv_ereqf}, we have
$
\phi^2(0)
\le \|\bs e_\sigma(0)\|_{\mc A}^2
	+\|\bs\delta(0)\|_{\bs\tau^{-1}}^2,
$
and therefore,
\begin{align*}
\phi^2(T)
\le r+ 2 \int_0^T \phi(t)\beta(t)\mr{d}t +\phi(T)l(T).
\end{align*}
By Lemma \ref{lm:int_ineq}, we have
\begin{align*}
\phi^2(T)&\le 2 r
+\left(
2\int_0^T\beta(t)\mathrm dt
+\sup_{t\in[0,T]}l(t)
\right)^2
\lesssim r +\sup_{t\in[0,T]}l^2(t)+ \left(\int_0^T\beta(t)\mr{d}t\right)^2\\
&\lesssim \|\bs\delta(0)\|_{\bs\tau^{-1}}^2 + \|\bs e_\sigma(0)\|_{\mc A}^2
+\left(\vvvert\bs\delta\vvvert_{\bs\tau^{-1},\infty}^{[0,T]}\right)^2\\
&\quad\,+\left(\int_0^T\left(
\|\ddot{\bs e}_u(t)\|_\rho+\|\dot{\bs e}_\sigma(t)\|_{\mc A}
+\|\dot{\bs\delta}(t)\|_{\bs\tau^{-1}}\right)
\mathrm{d}t\right)^2\\
&\lesssim \left(\|\bs e_\sigma(0)\|_{\mc A}
+\vvvert\bs\delta\vvvert_{\bs\tau^{-1},\infty}^{[0,T]}
+\vvvert\ddot{\bs e}_u\vvvert_{\rho,1}^{[0,T]}
+ \vvvert\dot{\bs e}_\sigma\vvvert_{\mc A,1}^{[0,T]}+\vvvert\dot{\bs\delta}\vvvert_{\bs\tau^{-1},1}^{[0,T]}\right)^2.
\end{align*}

Similarly, for the second estimate, we
integrate \eqref{eq:prp_id_tge0_b} from $0$ to $T$, then use \eqref{eq:prp_enid_t0_b} to estimate $\|\mathrm P_M(\dot{\bs\varepsilon}_h^u(0)
	-\dot{\bs\varepsilon}_h^{\widehat{u}}(0))\|_{\bs\tau}$, and obtain
\begin{align*}
&\frac{1}{2}
\left(
\|\dot{\bs\varepsilon}_h^\sigma(t)\|_{\mc A}^2
+\|\mathrm P_M(\dot{\bs\varepsilon}_h^u(t)
	-\dot{\bs\varepsilon}_h^{\widehat{u}}(t))\|_{\bs\tau}^2
+\|\ddot{\bs\varepsilon}_h^u(t)\|_\rho^2
\right)\bigg|_{t=0}^{t=T}\\
&\le 
\|\dot{\bs\delta}(T)\|_{\bs\tau^{-1}}
\|\mathrm P_M(\dot{\bs\varepsilon}_h^u(T)
	-\dot{\bs\varepsilon}_h^{\widehat{u}}(T))\|_{\bs\tau}
+\|\dot{\bs\delta}(0)\|_{\bs\tau^{-1}}
\left(\|\dot{\bs e}_\sigma(0)\|_{\mc A}^2
	+\|\dot{\bs\delta}(0)\|_{\bs\tau^{-1}}^2\right)^{1/2}\\
&\quad\, +
\int_0^T\hspace{-4pt}
\left(\|\dot{\bs\varepsilon}_h^\sigma(t)\|_{\mc A}^2
\!+\!\|\mathrm P_M(\dot{\bs\varepsilon}_h^u(t)
	\!-\!\dot{\bs\varepsilon}_h^{\widehat{u}}(t))\|_{\bs\tau}^2
\!+\!\|\ddot{\bs\varepsilon}_h^u(t)\|_\rho^2\right)^{\frac12}
\left(
\|\dddot{\bs e}_u(t)\|_\rho^2\!+\!\|\ddot{\bs e}_\sigma(t)\|_{\mc A}^2
\!+\!\|\ddot{\bs\delta}(t)\|_{\bs\tau^{-1}}^2
\right)^{\frac12}\hspace{-4pt}\mathrm dt.
\end{align*}

Now we define
\begin{align*}
\phi^2(t)&:=
\|\dot{\bs\varepsilon}_h^\sigma(t)\|_{\mc A}^2
+\|\mathrm P_M(\dot{\bs\varepsilon}_h^u(t)
	-\dot{\bs\varepsilon}_h^{\widehat{u}}(t))\|_{\bs\tau}^2
+\|\ddot{\bs\varepsilon}_h^u(t)\|_\rho^2,\\
\beta^2(t)&:=
\|\dddot{\bs e}_u(t)\|_\rho^2+\|\ddot{\bs e}_\sigma(t)\|_{\mc A}^2
+\|\ddot{\bs\delta}(t)\|_{\bs\tau^{-1}}^2,\\
l(t)&:=2\|\dot{\bs\delta}(t)\|_{\bs\tau^{-1}},\\
r&:=2\|\dot{\bs e}_\sigma(0)\|_{\mc A}^2
+3\|\dot{\bs\delta}(0)\|_{\bs\tau^{-1}}^2
+\|\ddot{\bs e}_u(0)\|_{\rho}^2.
\end{align*}
Since by \eqref{eq:prp_enid_t0_b} and \eqref{eq:prp_enid_t0_c} we have
$
\phi^2(0)
\le 
\|\dot{\bs e}_\sigma(0)\|_{\mc A}^2
+\|\dot{\bs\delta}(0)\|_{\bs\tau^{-1}}^2
+\|\ddot{\bs e}_u(0)\|_\rho^2,
$
and it follows that
\begin{align*}
\phi^2(T)
\le r+ 2 \int_0^T \phi(t)\beta(t)\mr{d}t +\phi(T)l(T).
\end{align*}
Using Lemma \ref{lm:int_ineq} again we obtain the second estimate.
\end{proof}

\subsection{Duality argument}
In this subsection, we give a proof to Theorem \ref{th:elwv_spconv} by using the duality argument. To begin with, we consider the adjoint equations of \eqref{eq:elwv_pde}:
\begin{subequations}\label{eq:elwv_adpde}
\begin{alignat}{5}\label{eq:elwv_adpdea}
\mc A\bs\Psi + \bs\varepsilon(\bs\Phi) &= 0&\qquad& \mr{in}\ \Omega\times[0,T],\\
\label{eq:elwv_adpdeb}
\rho\ddot{\bs\Phi}+\div\bs\Psi &= 0 && \mr{in}\ \Omega\times[0,T],\\
\label{eq:elwv_adpdec}
\gamma\bs\Phi &= 0 && \mr{on}\ \Gamma\times[0,T],\\
\label{eq:elwv_adpded}
\bs\Phi(T) &= 0 && \mr{on}\ \Omega,\\
\label{eq:elwv_adpdee}
\dot{\bs\Phi}(T) &= \bs\varepsilon_h^u(T) &&\mr{on}\ \Omega.
\end{alignat}
\end{subequations}

For a time-dependent function $f:[0,\infty)\rightarrow X$, we write
\begin{align*}
\underline{f}(t):=\int_t^T f(s)\mr{d}s.
\end{align*}
The following proposition allows us to control the solution of \eqref{eq:elwv_adpde} in certain energy norms. Similar results can be found in \cite{CoQu:2014}.
\begin{proposition}\label{prop:elwv_reg}
The following inequality holds
\begin{align}\label{eq:elwv_reg}
\vvvert\bs\Phi\vvvert_{H^1(\Omega),\infty}^{[0,T]}+\vvvert\dot{\bs\Phi}\vvvert_{\Omega,\infty}^{[0,T]}
\le C
\|\bs\varepsilon_h^u(T)\|_\Omega.
\end{align}
If \eqref{eq:elwv_ellreg} holds, then
\begin{align}\label{eq:elwv_mrreg}
\vvvert\underline{\bs\Psi}\vvvert_{H^1(\Omega),\infty}^{[0,T]}
+\vvvert\underline{\bs\Phi}\vvvert_{H^2(\Omega),\infty}^{[0,T]}
\le C\|\bs\varepsilon_h^u(T)\|_\Omega.
\end{align}
\end{proposition}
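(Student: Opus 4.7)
The plan is to prove the two inequalities separately: the first via energy conservation for the adjoint wave equation, and the second by recasting an integrated-in-time version of the adjoint system as an elliptic problem so that \eqref{eq:elwv_ellreg} can be applied.

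For \eqref{eq:elwv_reg}, I would eliminate $\bs\Psi$ using \eqref{eq:elwv_adpdea} to obtain the second-order equation $\rho\ddot{\bs\Phi}=\div(\mc A^{-1}\bs\varepsilon(\bs\Phi))$ with the homogeneous Dirichlet condition \eqref{eq:elwv_adpdec}. Testing with $\dot{\bs\Phi}$, integrating by parts (legal since $\gamma\dot{\bs\Phi}=0$ on $\Gamma$), and using the symmetry of $\mc A^{-1}$ yields the conservation law
\[
\frac{d}{dt}\Big(\|\dot{\bs\Phi}(t)\|_\rho^2 + (\mc A^{-1}\bs\varepsilon(\bs\Phi(t)),\bs\varepsilon(\bs\Phi(t)))_\Omega\Big)=0.
\]
Evaluating at $t=T$ using the terminal data \eqref{eq:elwv_adpded}--\eqref{eq:elwv_adpdee} shows the conserved quantity equals $\|\bs\varepsilon_h^u(T)\|_\rho^2$ for every $t\in[0,T]$. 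The uniform positivity of $\rho$ and $\mc A$ then yields $t$-uniform bounds on $\|\dot{\bs\Phi}(t)\|_\Omega$ and $\|\bs\varepsilon(\bs\Phi(t))\|_\Omega$, and Korn's first inequality (applicable because $\gamma\bs\Phi(t)=0$ on $\Gamma$) upgrades the strain bound to a full $H^1$ bound on $\bs\Phi(t)$. Taking the supremum over $t$ gives \eqref{eq:elwv_reg}.

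For \eqref{eq:elwv_mrreg}, the central idea is to integrate the adjoint equations between $t$ and $T$ and apply \eqref{eq:elwv_ellreg} to the antiderivative $\underline{\bs\Phi}$ rather than $\bs\Phi$ itself. Integrating \eqref{eq:elwv_adpdea} gives $\underline{\bs\Psi}(t)=-\mc A^{-1}\bs\varepsilon(\underline{\bs\Phi}(t))$, and integrating \eqref{eq:elwv_adpdeb} while inserting the terminal condition \eqref{eq:elwv_adpdee} produces
\[
-\div\big(\mc A^{-1}\bs\varepsilon(\underline{\bs\Phi}(t))\big)=\rho\big(\dot{\bs\Phi}(t)-\bs\varepsilon_h^u(T)\big),
\qquad \gamma\underline{\bs\Phi}(t)=0 \ \text{on}\ \Gamma.
\]
This is exactly the boundary value problem covered by \eqref{eq:elwv_ellreg}, whose right-hand side is in $L^2(\Omega)$ because of the already-proved bound on $\dot{\bs\Phi}(t)$ from \eqref{eq:elwv_reg}. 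Applying \eqref{eq:elwv_ellreg} controls $\|\underline{\bs\Phi}(t)\|_{2,\Omega}$ and $\|\mc A^{-1}\bs\varepsilon(\underline{\bs\Phi}(t))\|_{1,\Omega}$ by a constant multiple of $\|\dot{\bs\Phi}(t)\|_\Omega+\|\bs\varepsilon_h^u(T)\|_\Omega$, which \eqref{eq:elwv_reg} in turn dominates by $\|\bs\varepsilon_h^u(T)\|_\Omega$. Because $\underline{\bs\Psi}(t)=-\mc A^{-1}\bs\varepsilon(\underline{\bs\Phi}(t))$, this also bounds $\|\underline{\bs\Psi}(t)\|_{1,\Omega}$, and passing to the supremum over $t\in[0,T]$ completes the proof.

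No serious obstacle is expected. The only non-routine observation is recognizing that one should integrate the hyperbolic adjoint system in time before invoking the elliptic regularity \eqref{eq:elwv_ellreg}: this is what trades the acceleration term $\rho\ddot{\bs\Phi}$ for $\rho\dot{\bs\Phi}$, which is exactly the order of time derivative controlled by the energy estimate \eqref{eq:elwv_reg} proved in the first step.
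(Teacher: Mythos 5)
Your proposal is correct and follows essentially the same route as the paper: energy conservation for the adjoint wave equation combined with a Korn inequality gives \eqref{eq:elwv_reg}, and integrating the adjoint system from $t$ to $T$ so that \eqref{eq:elwv_ellreg} can be applied to $\underline{\bs\Phi}(t)$ (with right-hand side $\rho(\dot{\bs\Phi}(t)-\bs\varepsilon_h^u(T))$ controlled by the first estimate) gives \eqref{eq:elwv_mrreg}. The only cosmetic difference is that you invoke Korn's first inequality using $\gamma\bs\Phi=0$ where the paper cites Korn's second inequality; both are adequate here.
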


\begin{proof}
By conservation of energy we have
\begin{align*}
\|\dot{\bs\Phi}(t)\|_\rho^2 + \|\bs\varepsilon(\bs\Phi)(t)\|_{\mc A^{-1}}^2 = \|\bs\varepsilon_h^u(T)\|_\rho^2,
\end{align*}
for all $t\in[0,T]$. Now \eqref{eq:elwv_reg} follows by using Korn's second inequality.

Integrating \eqref{eq:elwv_adpdea} and \eqref{eq:elwv_adpdeb} from $t$ to $T$, we have
\begin{align*}
\mc A\underline{\bs\Psi}(t) + \bs\varepsilon(\underline{\bs\Phi}(t))=0,\qquad
\div\underline{\bs\Psi}(t)=\rho\dot{\bs\Phi}(t)-\rho\dot{\bs\Phi}(T),
\end{align*}
for all $t\in[0,T]$. Combining the latter equations with \eqref{eq:elwv_ellreg} and \eqref{eq:elwv_reg}, we obtain \eqref{eq:elwv_mrreg}.
\end{proof}

Now we define the dual projections and boundary remainder terms for the adjoint problem \eqref{eq:elwv_adpde} (for all $t\in[0,T]$)
\begin{align*}
(\Pi\bs\Psi(t),\Pi\bs\Phi(t)) 
&:= \prod_{K\in\mc T_h}\Pi(\bs\Psi(t)\big|_K,\bs\Phi(t)\big|_K;-\bs\tau\big|_{\pp K}),\\
\bs\Delta(t)&:=\prod_{K\in\mc T_h}\mathrm R(\bs\Psi(t)\big|_K,\bs\Phi(t)\big|_K;-\bs\tau\big|_{\pp K}).
\end{align*}
We also define the corresponding approximation terms
\begin{align*}
\bs e_\Phi(t) := \Pi\bs\Phi(t)-\bs\Phi(t),\qquad
\bs e_\Psi(t) := \Pi\bs\Psi(t) - \bs\Psi(t).
\end{align*}

\begin{proposition}\label{prop:wav_dual_id}
The following identity holds
\begin{align*}
\|\bs\varepsilon_h^u(T)\|_{\rho}^2 = 
\sum_{i=1}^7 T_i,
\end{align*}
where
\begin{alignat*}{5}
T_1&:=(\rho\dot{\bs\Phi}(0),\Pi\bs u_0-\bs u_h(0))_{\mc T_h}, &\qquad\quad
T_2&:=-\int_0^T\dualpr{\bs\delta(t),\bs e_\Phi(t)}_{\pp\mc T_h}\mathrm dt,\\
T_3&:=-\int_0^T\dualpr{\bs\Delta(t),\bs\varepsilon_h^u(t)-\bs\varepsilon_h^{\widehat u}(t)}_{\pp\mc T_h}\mathrm dt, &
T_4&:=\int_0^T(\mc A\bs e_\Psi(t),\bs\sigma(t)-\bs\sigma_h(t))_{\mc T_h}\mathrm dt,\\
T_5&:=-\int_0^T(\mc A\bs e_\sigma(t),\bs\Psi(t))_{\mc T_h}\mathrm dt,&
T_6&:=-\int_0^T(\rho\bs\Phi(t),\ddot{\bs e}_u(t))_{\mc T_h}\mathrm dt,\\
T_7&:=\int_0^T(\rho(\ddot{\bs u}(t)-\ddot{\bs u}_h(t)),\bs e_\Phi(t))_{\mc T_h}\mathrm dt.
\end{alignat*}
\end{proposition}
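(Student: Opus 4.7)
The plan is to first reduce the claim to a pointwise-in-$t$ identity by integration by parts in time, and then to establish that identity by a cross-testing argument that mirrors the steady-state duality proof of Proposition \ref{prop:st_dual}.

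The time step starts from the product-rule identity
\begin{align*}
\frac{\mr d}{\mr dt}\Big[(\rho\dot{\bs\Phi},\bs\varepsilon_h^u)_{\mc T_h} - (\rho\bs\Phi,\dot{\bs\varepsilon}_h^u)_{\mc T_h}\Big] = (\rho\ddot{\bs\Phi},\bs\varepsilon_h^u)_{\mc T_h} - (\rho\bs\Phi,\ddot{\bs\varepsilon}_h^u)_{\mc T_h},
\end{align*}
so that, upon integrating from $0$ to $T$ and invoking $\bs\Phi(T)=\bs 0$ and $\dot{\bs\Phi}(T)=\bs\varepsilon_h^u(T)$ from \eqref{eq:elwv_adpded}--\eqref{eq:elwv_adpdee} together with $\dot{\bs\varepsilon}_h^u(0)=\bs 0$ from \eqref{eq:elwv_ereqf} and $\bs\varepsilon_h^u(0)=\Pi\bs u_0-\bs u_h(0)$ from \eqref{eq:elwv_ereqe},
\begin{align*}
\|\bs\varepsilon_h^u(T)\|_\rho^2 = T_1 + \int_0^T\Big[(\rho\ddot{\bs\Phi},\bs\varepsilon_h^u)_{\mc T_h} - (\rho\bs\Phi,\ddot{\bs\varepsilon}_h^u)_{\mc T_h}\Big]\mr dt.
\end{align*}
The task then reduces to identifying the integrand with that of $T_2+\cdots+T_7$.

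To this end, I would apply Theorem \ref{th:PROJ} with the sign-flipped stabilization $-\bs\tau$ to $(\bs\Psi(t),\bs\Phi(t))$ to produce projection equations for $(\Pi\bs\Psi,\Pi\bs\Phi,\bs\Delta)$. Using Betti's formula on the block coming from $\mc A\bs\Psi+\bs\varepsilon(\bs\Phi)=\bs 0$, and substituting $\div\bs\Psi=-\rho\ddot{\bs\Phi}$ from \eqref{eq:elwv_adpdeb} into the analogue of \eqref{eq:tauProp_b}, one reads off closed-form expressions for $(\rho\ddot{\bs\Phi},\bs\varepsilon_h^u)_{\mc T_h}$ (after testing the modified \eqref{eq:tauProp_b} against $\bs w=\bs\varepsilon_h^u$) and for $(\rho\ddot{\bs\varepsilon}_h^u,\Pi\bs\Phi)_{\mc T_h}$ (using \eqref{eq:elwv_ereqb} with $\bs w=\Pi\bs\Phi$), both in terms of divergence pairings and boundary remainders. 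Testing the error equations \eqref{eq:elwv_ereq} symmetrically against $(\Pi\bs\Psi(t),\Pi\bs\Phi(t),\mr P_M\bs\Phi(t))$ produces companion identities for the same divergence pairings. Upon subtraction, the $\mc A$-pairings cancel by symmetry of $\mc A$; the four mixed $\bs\tau$-boundary contributions cancel after using $\dualpr{\bs\tau\mr P_M\bs x,\mr P_M\bs y}_{\pp\mc T_h}=\dualpr{\bs\tau\mr P_M\bs y,\mr P_M\bs x}_{\pp\mc T_h}$ (plus the fact that $\bs\tau\mr P_M(\cdot)$ takes values in $\bs M_h$); and $\dualpr{\bs\varepsilon_h^{\widehat u},\bs\Psi\bs n}_{\pp\mc T_h}$ vanishes because $\bs\Psi\in H(\div;\Rsym)$ is balanced on interior faces while $\bs\varepsilon_h^{\widehat u}$ is zero on $\Gamma$ by \eqref{eq:elwv_ereqd}. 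What survives is
\begin{align*}
(\rho\ddot{\bs\Phi},\bs\varepsilon_h^u)_{\mc T_h} - (\rho\bs\Phi,\ddot{\bs\varepsilon}_h^u)_{\mc T_h}
&= -(\mc A\bs e_\sigma,\Pi\bs\Psi)_{\mc T_h} + (\mc A\bs e_\Psi,\bs\varepsilon_h^\sigma)_{\mc T_h} \\
&\quad - (\rho\ddot{\bs e}_u,\Pi\bs\Phi)_{\mc T_h} + (\rho\ddot{\bs\varepsilon}_h^u,\bs e_\Phi)_{\mc T_h} \\
&\quad - \dualpr{\bs\delta,\bs e_\Phi}_{\pp\mc T_h} - \dualpr{\bs\Delta,\bs\varepsilon_h^u-\bs\varepsilon_h^{\widehat u}}_{\pp\mc T_h},
\end{align*}
and rewriting $\bs\sigma-\bs\sigma_h=\bs\varepsilon_h^\sigma-\bs e_\sigma$, $\ddot{\bs u}-\ddot{\bs u}_h=\ddot{\bs\varepsilon}_h^u-\ddot{\bs e}_u$, $\bs\Psi=\Pi\bs\Psi-\bs e_\Psi$ and $\bs\Phi=\Pi\bs\Phi-\bs e_\Phi$ expresses the right-hand side as the integrands of $T_4+T_5$, $T_6+T_7$, $T_2$ and $T_3$, with the symmetric cross terms $(\mc A\bs e_\sigma,\bs e_\Psi)_{\mc T_h}$ and $(\rho\ddot{\bs e}_u,\bs e_\Phi)_{\mc T_h}$ cancelling pairwise.

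The main obstacle is the symbolic bookkeeping in the cancellation step: the $\bs\tau$-boundary pairings appear in four slightly different forms (with $\Pi\bs\Phi$, $\bs\Phi$, $\mr P_M\bs\Phi$, and $\bs\varepsilon_h^{\widehat u}$ in varying slots), and their collapse requires repeated use of the self-adjointness of $\mr P_M$ on $\pp\mc T_h$ together with the facewise symmetry of $\bs\tau$. Beyond this algebra, the only analytic ingredients needed are Theorem \ref{th:PROJ} (in both its forward and sign-flipped forms) and elementary integration by parts in time.
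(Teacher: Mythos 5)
Your proposal is correct and follows essentially the same route as the paper's proof: the quantity $(\rho\dot{\bs\Phi},\bs\varepsilon_h^u)_{\mc T_h}-(\rho\bs\Phi,\dot{\bs\varepsilon}_h^u)_{\mc T_h}$ is exactly the auxiliary function $\eta(t)$ that the paper differentiates and integrates, and your cross-testing identity for the integrand reproduces, after the substitutions you list, the paper's intermediate identity \eqref{eq:7.200} combined with the $-(\rho\bs\Phi,\ddot{\bs\varepsilon}_h^u)_{\mc T_h}$ term. The only difference is the order of operations (you perform the time integration by parts before the cross-testing rather than after), which is immaterial.
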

\begin{proof}
From the adjoint equations \eqref{eq:elwv_adpde} tested with $(\bs\varepsilon_h^\sigma(t),\bs\varepsilon_h^u(t),\bs\varepsilon_h^{\widehat u}(t))$, and from the properties of the adjoint projection (see Theorem \ref{th:PROJ}), we obtain 
\begin{align*}
(\mc A\Pi\bs\Psi(t),\bs\varepsilon_h^\sigma(t))_{\mc T_h} 
- (\Pi\bs\Phi(t),\div\bs\varepsilon_h^\sigma(t))_{\mc T_h} 
+ \dualpr{\mathrm P_M\bs\Phi(t),\bs\varepsilon_h^\sigma(t)\bs n}_{\pp\mc T_h}&
=(\mc A\bs e_\Psi(t),\bs\varepsilon_h^\sigma(t))_{\mc T_h},\\
(\div\Pi\bs\Psi(t),\bs\varepsilon_h^u(t))_{\mc T_h} 
+\dualpr{\bs\tau\mathrm P_M(\Pi\bs\Phi(t)-\bs\Phi(t)),\bs\varepsilon_h^u(t)}_{\pp\mc T_h} &= 
-\dualpr{\bs\Delta(t),\bs\varepsilon_h^u(t)}_{\pp\mc T_h}\\
& \phantom{=}-(\rho\ddot{\bs\Phi}(t),\bs\varepsilon_h^u(t))_{\mc T_h},\\
-\dualpr{\Pi\bs\Psi(t)\bs n+\bs\tau(\Pi\bs\Phi(t)-\bs\Phi(t)),\bs\varepsilon_h^{\widehat u}(t)}
_{\pp\mc T_h\backslash\Gamma}&=
\dualpr{\bs\Delta,\bs\varepsilon_h^{\widehat u}(t)}_{\pp\mc T_h\backslash\Gamma},\\
\dualpr{\mathrm P_M\bs\Phi(t),
	\bs\varepsilon_h^\sigma(t)\bs n
	-\bs\tau\mr P_M(\bs\varepsilon_h^u(t)-\bs\varepsilon_h^{\widehat u}(t))
	}_\Gamma&=0.
\end{align*}
(Note that $\bs\Phi\equiv 0$ on $\Gamma$.) 
Taking now $\bs\theta = \Pi\bs\Psi(t)$, $\bs w=\Pi\bs\Phi(t)$, $\bs\mu=\mathrm P_M\bs\Phi(t)$ and $\bs\mu=\Pi\bs\Psi(t)\bs n+\bs\tau\mr P_M(\Pi\bs\Phi(t)-\bs\Phi(t))$ in the error equations \eqref{eq:elwv_ereqa} to \eqref{eq:elwv_ereqd},  and comparing the two sets of equations, we have
\begin{align*}
&-\dualpr{\bs\Delta(t),\bs\varepsilon_h^u(t)-\bs\varepsilon_h^{\widehat u}(t)}_{\pp\mc T_h}
+(\mc A\bs e_\Psi(t),\bs\varepsilon_h^\sigma(t)) _{\mc T_h}
- (\rho\ddot{\bs\Phi}(t),\bs\varepsilon_h^u(t))_{\mc T_h}\\
&\qquad\qquad=(\mc A\bs e_\sigma(t),\Pi\bs\Psi(t))_{\mc T_h}
+\dualpr{\bs\delta(t),\Pi\bs\Phi(t)-\bs\Phi(t)}_{\pp\mc T_h}
+(\rho(\ddot{\bs u}_h(t)-\ddot{\bs u}(t)),\Pi\bs\Phi(t))_{\mc T_h}.
\end{align*}
After rearranging terms, we have
\begin{align}\label{eq:7.200}
\nonumber
(\rho\ddot{\bs\Phi}(t),\bs\varepsilon_h^u(t))_{\mc T_h}=
&-\dualpr{\bs\Delta(t),\bs\varepsilon_h^u(t)
-\bs\varepsilon_h^{\widehat u}(t)}_{\pp\mc T_h}
-\dualpr{\bs\delta(t),\Pi\bs\Phi(t)-\bs\Phi(t)}_{\pp\mc T_h}\\
&-(\rho(\ddot{\bs u}_h(t)-\ddot{\bs u}(t)),\Pi\bs\Phi)_{\mc T_h}
+(\mc A\bs e_\Psi(t),\bs\sigma(t)-\bs\sigma_h(t))_{\mc T_h}\\
\nonumber
&-(\mc A\bs e_\sigma(t),\bs\Psi(t))_{\mc T_h}.
\end{align}

Defining now
\begin{align*}
\eta(t):=
(\rho\dot{\bs\Phi}(t),\bs\varepsilon_h^u(t))_{\mc T_h}
-(\rho\bs\Phi(t),\dot{\bs\varepsilon}_h^u(t))_{\mc T_h},
\end{align*}
which satisfies
\begin{align*}
\eta(T) = (\rho\bs\varepsilon_h^u(T),\bs\varepsilon_h^u(T))_{\mc T_h},\quad
\eta(0) = (\rho\dot{\bs\Phi}(0),\Pi\bs u_0-\bs u_h(0))_{\mc T_h},
\end{align*}
due to 
$\bs\Phi(T)=0$, $\dot{\bs\Phi}(T)=\bs\varepsilon_h^u(T)$, $\bs\varepsilon_h^u(0)=\Pi\bs u_0-\bs u_h(0)$ and $\dot{\bs\varepsilon}_h^u(0)=0$ (see \eqref{eq:elwv_ereq} and \eqref{eq:elwv_adpde}).
By \eqref{eq:7.200}, we have
\begin{align*}
\dot{\eta}(t) =& (\rho\ddot{\bs\Phi}(t),\bs\varepsilon_h^u(t))_{\mc T_h}
-(\rho\bs\Phi(t),\ddot{\bs\varepsilon}_h^u(t))_{\mc T_h}\\
=&
-\dualpr{\bs\Delta(t),\bs\varepsilon_h^u(t)-\bs\varepsilon_h^{\widehat u}(t)}_{\pp\mc T_h}
-\dualpr{\bs\delta(t),\bs e_\Phi(t)}_{\pp\mc T_h}\\
&+(\mc A\bs e_\Psi(t),\bs\sigma(t)-\bs\sigma_h(t))_{\mc T_h}
-(\mc A\bs e_\sigma(t),\bs\Psi(t))_{\mc T_h}\\
&+(\rho(\ddot{\bs u}(t)-\ddot{\bs u}_h(t)),\bs e_\Phi(t))_{\mc T_h}
-(\rho\bs\Phi(t),\ddot{\bs e}_u(t))_{\mc T_h}.
\end{align*}
Therefore
\begin{align*}
\|\rho^{1/2}\bs\varepsilon_h^u(T)\|_{\mc T_h}^2 = \int_0^T\dot{\eta}(t)\mathrm dt+
(\rho\dot{\bs\Phi}(0),\Pi\bs u_0-\bs u_h(0))_{\mc T_h},
\end{align*}
and the proof is completed.
\end{proof}


The next proposition gives an estimate for the term $T_7$ in Proposition \ref{prop:wav_dual_id}.

\begin{proposition}\label{prop:elwv_tricky}
If \eqref{eq:elwv_ellreg} holds, then
\begin{align*}
\left|T_7\right|
&\lesssim h
\|\bs\varepsilon_h^u(T)\|_{\mc T_h}
\bigg(
\|\bs\varepsilon_h^\sigma(0)\|_{\mc T_h}
+T\vvvert\dot{\bs\varepsilon}_h^{\sigma}\vvvert_{\mc T_h,\infty}^{[0,T]}\\
&\quad\, 
+\|\mr P_M\bs\varepsilon_h^u(0)-\bs\varepsilon_h^{\widehat{u}}(0)\|_{\bs\tau}
+T\vvvert\mr P_M\dot{\bs\varepsilon}_h^u-\dot{\bs\varepsilon}_h^{\widehat{u}}\vvvert_{\bs\tau,\infty}^{[0,T]}\\
&\quad\, 
+h\|\ddot{\bs e}_u(0)\|_{\mc T_h}
+hT\vvvert\dddot{\bs e}_u\vvvert_{\mc T_h,\infty}^{[0,T]}
+\|\bs\delta(0)\|_{\bs\tau^{-1}}+T\vvvert\dot{\bs\delta}\vvvert_{\bs\tau^{-1},\infty}^{[0,T]}
\bigg).
\end{align*}
\end{proposition}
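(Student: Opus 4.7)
The goal is to move the integrand into pairings against the time-integrated adjoint $\underline{\bs\Phi}$ (and $\underline{\bs\Psi}$), since only those quantities enjoy the $H^2$/$H^1$ regularity bound \eqref{eq:elwv_mrreg}. Because $\Pi$ is linear and commutes with integration in time, the quantity $\underline{\bs e}_\Phi(t):=\Pi\underline{\bs\Phi}(t)-\underline{\bs\Phi}(t)$ satisfies $\bs e_\Phi=-\partial_t\underline{\bs e}_\Phi$, together with the pointwise-in-time bound
\[
\|\underline{\bs e}_\Phi(t)\|_{\mc T_h}\lesssim h^2\|\bs\varepsilon_h^u(T)\|_{\mc T_h},\qquad t\in[0,T],
\]
obtained by applying Theorem \ref{th:PROJ} (at $m=1$) to $(\underline{\bs\Psi}(t),\underline{\bs\Phi}(t))$ and then invoking \eqref{eq:elwv_mrreg}. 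This $h^2$ is the source of the overall factor $h$ in the statement: one power is retained for the $\ddot{\bs e}_u$-remainder terms, while the other will be exchanged for a discrete error quantity via the error equations.

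\textbf{Main reduction.} Decomposing $\ddot{\bs u}-\ddot{\bs u}_h=-\ddot{\bs e}_u+\ddot{\bs\varepsilon}_h^u$, writing $\bs e_\Phi=-\partial_t\underline{\bs e}_\Phi$, and integrating by parts in time (using $\underline{\bs e}_\Phi(T)=\bs 0$) gives
\begin{align*}
T_7 &= -(\rho\ddot{\bs e}_u(0),\underline{\bs e}_\Phi(0))_{\mc T_h}
-\int_0^T(\rho\dddot{\bs e}_u,\underline{\bs e}_\Phi)_{\mc T_h}\,\mathrm dt\\
&\quad +(\rho\ddot{\bs\varepsilon}_h^u(0),\underline{\bs e}_\Phi(0))_{\mc T_h}
+\int_0^T(\rho\dddot{\bs\varepsilon}_h^u,\underline{\bs e}_\Phi)_{\mc T_h}\,\mathrm dt.
\end{align*}
Cauchy--Schwarz applied to the first two contributions against the $h^2$ bound above immediately delivers the $h\cdot h\|\ddot{\bs e}_u(0)\|$ and $hT\cdot h\vvvert\dddot{\bs e}_u\vvvert_{\mc T_h,\infty}^{[0,T]}$ pieces of the statement.

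\textbf{Main obstacle.} The two remaining contributions involve $\ddot{\bs\varepsilon}_h^u$ and $\dddot{\bs\varepsilon}_h^u$, which are not allowed quantities on the right-hand side; this is where the real work lies. The plan is to split $\underline{\bs e}_\Phi=\Pi\underline{\bs\Phi}-\underline{\bs\Phi}$ and treat the two pieces separately. On the discrete side $\Pi\underline{\bs\Phi}\in\bs W_h$, the error equation \eqref{eq:elwv_ereqb} (respectively its time derivative) tested with $\bs w=\Pi\underline{\bs\Phi}$ lets us trade $\rho\ddot{\bs\varepsilon}_h^u$ (resp.\ $\rho\dddot{\bs\varepsilon}_h^u$) for $\div\bs\varepsilon_h^\sigma$, $\bs\tau\mr P_M(\bs\varepsilon_h^u-\bs\varepsilon_h^{\widehat{u}})$ and $\bs\delta$ (plus their time derivatives), together with a $\rho\ddot{\bs e}_u$-residual. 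On the continuous side $-\underline{\bs\Phi}$, I would invoke the adjoint identities $\mc A\underline{\bs\Psi}+\bs\varepsilon(\underline{\bs\Phi})=\bs 0$ and $\div\underline{\bs\Psi}=\rho\dot{\bs\Phi}-\rho\bs\varepsilon_h^u(T)$ (obtained by integrating \eqref{eq:elwv_adpdea}--\eqref{eq:elwv_adpdeb} from $t$ to $T$), then use elementwise integration by parts together with the projection-orthogonality \eqref{eq:rmproj} to recast the pairings in the steady-state duality form of Proposition \ref{prop:st_dual}; subtracting $\mr P_0\bs\varepsilon(\underline{\bs\Phi})$ buys the extra power of $h$ needed.

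\textbf{Conclusion.} After these reductions, every non-discrete factor becomes a norm of $\underline{\bs\Phi}$ or $\underline{\bs\Psi}$ controlled by $\|\bs\varepsilon_h^u(T)\|_{\mc T_h}$ via \eqref{eq:elwv_mrreg}, while each discrete error factor appears either at $t=0$ or as a time integral bounded by $\|\cdot(0)\|+T\vvvert\dot{\,\cdot\,}\vvvert_\infty^{[0,T]}$. Combining these yields the estimate. The most delicate book-keeping is in the $\bs\tau$-weighted boundary pairings, which must be tracked through the natural $\bs\tau/\bs\tau^{-1}$ duality to produce the $\|\bs\delta\|_{\bs\tau^{-1}}$ and $\|\mr P_M\bs\varepsilon_h^u-\bs\varepsilon_h^{\widehat{u}}\|_{\bs\tau}$ norms of the statement, in the same spirit as the energy identity in Proposition \ref{prop:ene_id_tge0}.
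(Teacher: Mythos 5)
Your skeleton matches the paper's: the same splitting of $T_7$ into an $\ddot{\bs e}_u$-part and an $\ddot{\bs\varepsilon}_h^u$-part, the same use of the antiderivative $\underline{\bs\Phi}$ and time integration by parts, and the same way of closing the $\ddot{\bs e}_u$-part by Cauchy--Schwarz against the $O(h^2)$ bound on $\bs e_{\underline\Phi}$ coming from Theorem \ref{th:PROJ} and \eqref{eq:elwv_mrreg}. Whether you integrate by parts in time before or after invoking the error equation \eqref{eq:elwv_ereqb} is immaterial; both orders produce the same list of terms.

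The genuine gap is in your ``main obstacle'' step. The whole point is that the test function paired with $\rho\ddot{\bs\varepsilon}_h^u$ must be \emph{simultaneously} discrete (so the error equation applies) and small (so the factor $h$ survives). The paper achieves this by inserting the $\rho$-weighted elementwise $L^2$ projection $\mr P_{k+1}^\rho$: since $\ddot{\bs\varepsilon}_h^u(t)|_K\in\mc P_{k+1}(K;\R^3)$ one has $(\rho\ddot{\bs\varepsilon}_h^u,\bs e_\Phi)_{\mc T_h}=(\rho\ddot{\bs\varepsilon}_h^u,\mr P_{k+1}^\rho\bs e_\Phi)_{\mc T_h}$, and $\mr P_{k+1}^\rho\bs e_{\underline\Phi}=\Pi\underline{\bs\Phi}-\mr P_{k+1}^\rho\underline{\bs\Phi}$ is an admissible test function in \eqref{eq:elwv_ereqb} that still satisfies $\|\mr P_{k+1}^\rho\bs e_{\underline\Phi}\|_K\lesssim h_K^2(|\underline{\bs\Phi}|_{2,K}+|\underline{\bs\Psi}|_{1,K})$ and $\|\bs\tau^{1/2}\mr P_{k+1}^\rho\bs e_{\underline\Phi}\|_{\pp K}\lesssim h_K(\cdots)$. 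Your split of $\underline{\bs e}_\Phi$ into $\Pi\underline{\bs\Phi}$ and $-\underline{\bs\Phi}$ destroys this: each piece is only $O(1)$, so you are forced to rely on a cancellation you never exhibit, and the continuous piece $\int_0^T(\rho\dddot{\bs\varepsilon}_h^u,\underline{\bs\Phi})_{\mc T_h}\,\mathrm dt$ is out of reach of the tools you name --- the error equations only accept test functions in $\bs W_h$, the adjoint identities $\mc A\underline{\bs\Psi}+\bs\varepsilon(\underline{\bs\Phi})=0$ and $\div\underline{\bs\Psi}=\rho\dot{\bs\Phi}-\rho\dot{\bs\Phi}(T)$ give you nothing because no differential operator acts on $\underline{\bs\Phi}$ in that zeroth-order pairing, and \eqref{eq:rmproj} together with the $\mr P_0$ subtraction is the mechanism for the term $T_5$ (where $\bs e_\sigma$ meets $\bs\varepsilon(\underline{\bs\Phi})$), not for $T_7$. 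Two further ingredients you omit but need once the projection is inserted: the inverse estimate $\|\div\bs\theta\|_K\lesssim h_K^{-1}\|\bs\theta\|_K$ for $\bs\theta\in\mc P_k(K;\Rsym)$, which converts the $O(h^2)$ smallness into the net factor $h$ on the $\|\bs\varepsilon_h^\sigma(0)\|+T\vvvert\dot{\bs\varepsilon}_h^\sigma\vvvert$ terms, and the $\bs\tau$-weighted face bound above, which produces the $\bs\tau$/$\bs\tau^{-1}$ dual pairings for the $\mr P_M(\bs\varepsilon_h^u-\bs\varepsilon_h^{\widehat u})$ and $\bs\delta$ terms.
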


\begin{proof}
In the coming arguments, for the sake of shortening some estimates, we
will prove bounds in terms of the quantity
\begin{align}\label{eq:def_theta}
\bs\Theta(T):=\sup_{t\in[0,T]}|\underline{\bs\Phi}(t)|_{2,\Omega}
+\sup_{t\in[0,T]}|\underline{\bs\Psi}(t)|_{1,\Omega},
\end{align}
which we have shown in Proposition \ref{prop:elwv_reg} that, assuming  \eqref{eq:elwv_ellreg}, we have the estimate
\begin{align}\label{eq:thtabnd}
\bs\Theta(T)\lesssim \|\bs\varepsilon_h^u(T)\|_{\Omega}.
\end{align}
Note that
\begin{align}\label{eq:elwv_trick_decom}
T_7
=\int_0^T(\rho\ddot{\bs\varepsilon}_h^u(t),\bs e_\Phi(t))_{\mc T_h}\mathrm dt - \int_0^T (\rho\ddot{\bs e}_u(t),\bs e_\Phi(t))_{\mc T_h}\mathrm dt.
\end{align}
For the second term of \eqref{eq:elwv_trick_decom}, we have
\begin{align*}
\left|\int_0^T (\rho \ddot{\bs e}_u(t),\bs e_\Phi(t))_{\mc T_h}\mathrm dt\right|
&=\left|(\rho\ddot{\bs e}_u(0),\bs e_{\underline{\Phi}}(0))_{\mc T_h}
+\int_0^T (\rho\dddot{\bs e}_u(t), \bs e_{\underline{\Phi}}(t))_{\mc T_h}\mathrm dt\right|\\
&\lesssim h^2\left(\|\ddot{\bs e}_u(0)\|_{\mc T_h}+T\vvvert\dddot{\bs e}_u\vvvert_{\mc T_h,\infty}^{[0,T]}\right)\bs\Theta(T).
\end{align*}
We next estimate the remaining term in \eqref{eq:elwv_trick_decom}. 
Since $\ddot{\bs\varepsilon}_h^u(t)\big|_K\in\mc P_{k+1}(K;\mbb R^3)$ for all $K$, we have
\begin{align*}
\int_0^T(\rho\ddot{\bs\varepsilon}_h^u(t),\bs e_\Phi(t))_{\mc T_h}\mathrm dt
=\int_0^T(\rho\ddot{\bs\varepsilon}_h^u(t),\mr P_{k+1}^\rho\bs e_\Phi(t))_{\mc T_h}\mathrm dt,
\end{align*}
where, $\mr P_{k+1}^\rho$ is the $\rho$-weighted $L^2$ projection onto $\mc P_{k+1}(K;\mbb R^3)$.
Testing the second error equation \eqref{eq:elwv_ereqb} with $\bs w=\mr P_{k+1}^\rho \bs e_\Phi(t)$ we have
\begin{align*}
(\rho\ddot{\bs\varepsilon}_h^u(t),\mr P_{k+1}^\rho\bs e_\Phi(t))_{\mc T_h} 
&= (\div\bs\varepsilon_h^\sigma(t),\mr P_{k+1}^\rho\bs e_\Phi(t))_{\mc T_h}
-\dualpr{\bs\tau\mathrm P_M({\bs\varepsilon_h^u}(t)
-\bs\varepsilon_h^{\widehat u}(t)),\mr P_{k+1}^\rho\bs e_\Phi(t)}_{\pp\mc T_h}\\
&\quad\,
+(\rho\ddot{\bs e}_u(t),\mr P_{k+1}^\rho\bs e_\Phi(t))_{\mc T_h}\nonumber+\dualpr{\bs\delta(t),\mr P_{k+1}^\rho\bs e_\Phi(t)}_{\pp\mc T_h}\\
&=: Q_1(t)+Q_2(t)+Q_3(t)+Q_4(t).
\end{align*}
Now we use integration by part and obtain
\begin{align*}
\int_0^T Q_1(t)\mathrm dt
&= (\div{\bs\varepsilon}_h^\sigma(0),\mr P_{k+1}^\rho\bs e_{\underline{\Phi}}(0))_{\mc T_h} 
+ \int_0^T (\div\dot{\bs\varepsilon}_h^\sigma(t),\mr P_{k+1}^\rho\bs e_{\underline{\Phi}}(t))_{\mc T_h}\mathrm dt,\\
\int_0^T Q_2(t)\mathrm dt
&=-\dualpr{\bs\tau\mathrm P_M({\bs\varepsilon_h^u}(0)
-\bs\varepsilon_h^{\widehat u}(0)),\mr P_{k+1}^\rho\bs e_{\underline{\Phi}}(0)}_{\pp\mc T_h}\\
&\quad\, -\int_0^T \dualpr{\bs\tau\mathrm P_M({\dot{\bs\varepsilon}_h^u}(t)
-\dot{\bs\varepsilon}_h^{\widehat u}(t)),\mr P_{k+1}^\rho\bs e_{\underline{\Phi}}(t)}_{\pp\mc T_h}\mathrm dt,\\
\int_0^T Q_3(t)\mathrm dt
&=(\rho\ddot{\bs e}_u(0),\mr P_{k+1}^\rho\bs e_{\underline{\Phi}}(0))_{\mc T_h}
+\int_0^T(\rho\dddot{\bs e}_u(t),\mr P_{k+1}^\rho\bs e_{\underline{\Phi}}(t))_{\mc T_h}\mathrm dt,\\
\int_0^T Q_4(t)\mathrm dt
&=\dualpr{\bs\delta(0),\mr P_{k+1}^\rho\bs e_{\underline{\Phi}}(0)}_{\pp\mc T_h}
+\int_0^T\dualpr{\dot{\bs\delta}(t),\mr P_{k+1}^\rho\bs e_{\underline{\Phi}}(t)}_{\pp\mc T_h}\mathrm dt.
\end{align*}
Note that
\begin{align*}
\mr P_{k+1}^\rho\bs e_{\underline{\Phi}}(t) = \mr P_{k+1}^\rho(\Pi\underline{\bs\Phi}(t)-\underline{\bs\Phi}(t))
= \Pi\underline{\bs\Phi}(t) - \underline{\bs\Phi}(t) - \mr P_{k+1}^\rho\underline{\bs\Phi}(t) + \underline{\bs\Phi}(t).
\end{align*}
Combining the above with the convergence properties about $\Pi\bs\Psi(t)$ and $\Pi\bs\Phi(t)$ (see Theorem \ref{th:PROJ}) we have
\begin{align*}
\|\mr P_{k+1}^\rho\bs e_{\underline{\Phi}}(t)\|_{K}
&\lesssim h_K^2 (|\underline{\bs\Phi}(t)|_{2,K} + |\underline{\bs\Psi}(t)|_{1,K}),\\
\|\bs\tau^{1/2}\mr P_{k+1}^\rho\bs e_{\underline{\Phi}}(t)\|_{\pp K}
&\lesssim h_K (|\underline{\bs\Phi}(t)|_{2,K} + |\underline{\bs\Psi}(t)|_{1,K}).
\end{align*}
Now back to the estimate of $\int_0^TQ_i(t)\mathrm dt$, we have
\begin{align*}
\left|\int_0^TQ_1(t)\mathrm dt\right|&\lesssim 
h(\|\bs\varepsilon_h^\sigma(0)\|_{\mc T_h}+T\vvvert\dot{\bs\varepsilon}_h^{\sigma}\vvvert_{\mc T_h,\infty}^{[0,T]})\bs\Theta(T),\\
\left|\int_0^T Q_2(t)\mathrm dt\right|&\lesssim
h(\|\mr P_M\bs\varepsilon_h^u(0)-\bs\varepsilon_h^{\widehat{u}}(0)\|_{\bs\tau}
+T\vvvert\mr P_M\dot{\bs\varepsilon}_h^u-\dot{\bs\varepsilon}_h^{\widehat{u}}\vvvert_{\bs\tau,\infty}^{[0,T]})
\bs\Theta(T),\\
\left|\int_0^T Q_3(t)\mathrm dt\right| &\lesssim h^2 (\|\ddot{\bs e}_u(0)\|_{\mc T_h}+T\vvvert\dddot{\bs e}_u\vvvert_{\mc T_h,\infty}^{[0,T]})
\bs\Theta(T),\\
\left|\int_0^T Q_4(t)\mathrm dt\right|
&\lesssim h (\|\bs\delta(0)\|_{\bs\tau^{-1}}+T\vvvert\dot{\bs\delta}\vvvert_{\bs\tau^{-1},\infty}^{[0,T]})
\bs\Theta(T),
\end{align*}
where we used the fact that $\|\div\bs\sigma\|_K\lesssim h_K^{-1}\|\bs\sigma\|_K$ for any $\bs\sigma\in\mc P_k(K;\Rsym)$. Finally, we use \eqref{eq:thtabnd} to bound $\bs\Theta(T)$ and the proof is completed.
\end{proof}

Now we are ready to prove Theorem \ref{th:elwv_spconv}.

\begin{proof}[Proof of Theorem \ref{th:elwv_spconv}]
Consider Proposition \ref{prop:wav_dual_id}. We will give estimates for the terms $T_i$ for $i=1\rightarrow 7$.

For $T_1$, by \eqref{eq:elwv_reg} we have
\begin{align*}
\left| T_1\right|=\left|(\rho\dot{\bs\Phi}(0),\Pi\bs u_0-\bs u_h(0))_{\mc T_h}\right|
\lesssim \|\bs\varepsilon_h^u(T)\|_{\mc T_h}(\|\Pi\bs u_0-\bs u_0\|_{\mc T_h}+\|\bs u_0 - \bs u_h(0)\|_{\mc T_h}).
\end{align*}
Note that $\bs u_h(0)$ is the solution of the HDG+ scheme \eqref{eq:elwv_HDGt0}. 
By Theorem \ref{th:st} in Section \ref{sec:st_main}, we have
\begin{align*}
\|\bs u_h(0) - \bs u_0\|_{\mc T_h}\lesssim 
h(\|\bs e_\sigma(0)\|_{\mc T_h}+\|\bs\delta(0)\|_{\bs\tau^{-1}}).
\end{align*}
Therefore
\begin{align*}
\left|(\rho\dot{\bs\Phi}(0),\Pi\bs u_0-\bs u_h(0))_{\mc T_h}\right|
\lesssim \|\bs\varepsilon_h^u(T)\|_{\mc T_h}(\|\bs e_{u}(0)\|_{\mc T_h}+h\|\bs e_\sigma(0)\|_{\mc T_h}
+h\|\bs\delta(0)\|_{\bs\tau^{-1}}).
\end{align*}

For $T_2$, we have
\begin{align*}
\left| T_2\right|&=\left|\int_0^T\dualpr{\bs\delta(t),\Pi\bs\Phi(t)-\bs\Phi(t)}_{\pp\mc T_h}\mathrm{d}t\right|\\
&=\left|\dualpr{\bs\delta(0),\Pi\underline{\bs\Phi}(0)
-\underline{\bs\Phi}(0)}_{\pp\mc T_h}
+\int_0^T
\dualpr{\dot{\bs\delta}(t),\Pi\underline{\bs\Phi}(t)
	-\underline{\bs\Phi}(t)}_{\pp\mc T_h}\mathrm{d}t\right|\\
&\le 
\|\bs\delta(0)\|_{\bs\tau^{-1}}\|\Pi\underline{\bs\Phi}(0)-\underline{\bs\Phi}(0)\|_{\bs\tau}
+\int_0^T\|\dot{\bs\delta}(t)\|_{\bs\tau^{-1}}\|\Pi\underline{\bs\Phi}(t)-\underline{\bs\Phi}(t)\|_{\bs\tau}\,\mathrm{d}t\\	
&\lesssim  h\|\bs\varepsilon_h^u(T)\|_{\mc T_h}
\left(
\|\bs\delta(0)\|_{\bs\tau^{-1}}
+T\vvvert\dot{\bs\delta}\vvvert_{\bs\tau^{-1},\infty}^{[0,T]}
\right),
\end{align*}
where we used the convergence properties about $\Pi\underline{\bs\Phi}$ (by Theorem \ref{th:PROJ}) and then equation \eqref{eq:elwv_mrreg} to bound $\bs\Theta(T)$ (see \eqref{eq:def_theta} for the definition).

Using similar ideas, 
for $T_3$  we have
\begin{align*}
\left| T_3\right|&=\left| \int_0^T
\dualpr{\bs\Delta(t),\bs\varepsilon_h^u(t)-\bs\varepsilon_h^{\widehat u}(t)}_{\pp\mc T_h}\mathrm{d}t\right|\\
&=\left|\dualpr{\underline{\bs\Delta}(0),\bs\varepsilon_h^u(0)
	-\bs\varepsilon_h^{\widehat u}(0)}_{\pp\mc T_h}
+\int_0^T\dualpr{\underline{\bs\Delta}(t),\dot{\bs\varepsilon}_h^u(t)
-\dot{\bs\varepsilon}_h^{\widehat u}(t)}_{\pp\mc T_h}\mathrm{d}t\right|\\
&\le 
\|\underline{\bs\Delta}(0)\|_{\bs\tau^{-1}}
\|\mathrm P_M(\bs\varepsilon_h^u(0)-\bs\varepsilon_h^{\widehat u}(0))\|_{\bs\tau}
+\int_0^T
\|\underline{\bs\Delta}(t)\|_{\bs\tau^{-1}}
\|\mathrm P_M(\dot{\bs\varepsilon}_h^u(t)-\dot{\bs\varepsilon}_h^{\widehat u}(t))\|_{\bs\tau}\mathrm{d}t
\\
&\lesssim
h\|\bs\varepsilon_h^u(T)\|_{\mc T_h}
\left(
\|\mathrm P_M(\bs\varepsilon_h^u(0)-\bs\varepsilon_h^{\widehat u}(0))\|_{\bs\tau}
+T\vvvert\mathrm P_M(\dot{\bs\varepsilon}_h^u-\dot{\bs\varepsilon}_h^{\widehat u})\vvvert_{\bs\tau,\infty}^{[0,T]}
\right),
\end{align*}
and for $T_4$, we have
\begin{align*}
\left| T_4\right|&=\left|\int_0^T(\mc A\bs e_\Psi(t),\bs\sigma(t)-\bs\sigma_h(t))_{\mc T_h}\mathrm{d}t\right|\\
&\le \left| (\mc A(\Pi\underline{\bs\Psi}(0)-\underline{\bs\Psi}(0))
,\bs\sigma(0)-\bs\sigma_h(0))_{\mc T_h}\right|\\
&\quad\,+\left|\int_0^T
(\mc A(\Pi\underline{\bs\Psi}(t)-\underline{\bs\Psi}(t))
,\dot{\bs\sigma}(t)-\dot{\bs\sigma}_h(t))_{\mc T_h}\mathrm{d}t\right|\\
&\lesssim
h\|\bs\varepsilon_h^u(T)\|_{\mc T_h}
\left(
\|\bs\sigma(0)-\bs\sigma_h(0)\|_{\mc T_h}
+T\vvvert\dot{\bs\sigma}-\dot{\bs\sigma}_h\vvvert_{\mc T_h,\infty}^{[0,T]}
\right).
\end{align*}

For $T_5$, by \eqref{eq:rmproj} and \eqref{eq:elwv_mrreg}, we have
\begin{align*}
\left|T_5\right|&=\left|\int_0^T(\bs e_\sigma(t),\bs\varepsilon(\bs\Phi)(t))_{\mc T_h}\mathrm{d}t\right|\\
&\le \left|(\Pi\bs\sigma(0)-\bs\sigma(0),\bs\varepsilon(\underline{\bs\Phi})(0)
-\mathrm P_0\bs\varepsilon(\underline{\bs\Phi})(0))_{\mc T_h}\right|\\
&\quad\, +\left|\int_0^T(\Pi\dot{\bs\sigma}(s)-\dot{\bs\sigma}(t),
\bs\varepsilon(\underline{\bs\Phi})(t)
-\mathrm P_0\bs\varepsilon(\underline{\bs\Phi})(t))_{\mc T_h}\mathrm{d}t\right|\\
&\lesssim
h\|\bs\varepsilon_h^u(T)\|_{\mc T_h}
\left(
\|\Pi\bs\sigma(0)-\bs\sigma(0)\|_{\mc T_h}
+T\vvvert\Pi\dot{\bs\sigma}-\dot{\bs\sigma}\vvvert_{\mc T_h,\infty}^{[0,T]}
\right).
\end{align*}

For $T_6$, we simply use \eqref{eq:elwv_reg} and obtain
\begin{align*}
\left| T_6\right|=\left|\int_0^T(\rho\bs\Phi,\ddot{\bs e}_u)_{\mc T_h}\right|
\lesssim \vvvert\bs\Phi\vvvert_{\Omega,\infty}^{[0,T]}\vvvert\ddot{\bs e}_u\vvvert_{\Omega,1}^{[0,T]}
\lesssim T\|\bs\varepsilon_h^u(T)\|_{\mc T_h}
\vvvert\ddot{\bs e}_u\vvvert_{\mc T_h,\infty}^{[0,T]}.
\end{align*}

Now we use Proposition \ref{prop:elwv_tricky} to estimate $T_7$, and finally obtain
\begin{align*}
\|\bs\varepsilon_h^u(T)\|_{\mc T_h}
\lesssim\ 
&(\|\bs e_{u}(0)\|_{\mc T_h}+h\|\bs e_\sigma(0)\|_{\mc T_h}
+h\|\bs\delta(0)\|_{\bs\tau^{-1}})\\
&+h\left(
\|\bs\delta(0)\|_{\bs\tau^{-1}}
+T\vvvert\dot{\bs\delta}\vvvert_{\bs\tau^{-1},\infty}^{[0,T]}
\right)\\
&+h\left(
\|\mathrm P_M(\bs\varepsilon_h^u(0)-\bs\varepsilon_h^{\widehat u}(0))\|_{\bs\tau}
+T\vvvert\mathrm P_M(\dot{\bs\varepsilon}_h^u-\dot{\bs\varepsilon}_h^{\widehat u})\vvvert_{\bs\tau,\infty}^{[0,T]}
\right)\\
&+h\left(
\|\bs\varepsilon_h^\sigma(0)\|_{\mc T_h}
+T\vvvert\dot{\bs\varepsilon}_h^\sigma\vvvert_{\mc T_h,\infty}^{[0,T]}
\right)\\
&+h\left(
\|\bs e_\sigma(0)\|_{\mc T_h}
+T\vvvert\dot{\bs e}_\sigma\vvvert_{\mc T_h,\infty}^{[0,T]}
\right)+T\vvvert\ddot{\bs e}_u\vvvert_{\mc T_h,\infty}^{[0,T]}\\
&+h^2
\bigg(
\|\ddot{\bs e}_u(0)\|_{\mc T_h}
+T\vvvert\dddot{\bs e}_u\vvvert_{\mc T_h,\infty}^{[0,T]}
\bigg).
\end{align*}
Combing the above estimates with Proposition \ref{prop:ene_id_t0} and Theorem \ref{th:elwv_th1}, the proof is completed.

\end{proof}

\section{Numerical experiments}
In this section, we present some numerical experiments to support our error estimates in Section \ref{sec:trans_elas}. Note that there are experiments for the steady-state and time-harmonic cases in \cite{QiShSh:2018} and \cite{HuPrSa:2017} respectively.
\begin{figure}[ht]
\centering
\includegraphics[scale=0.4]{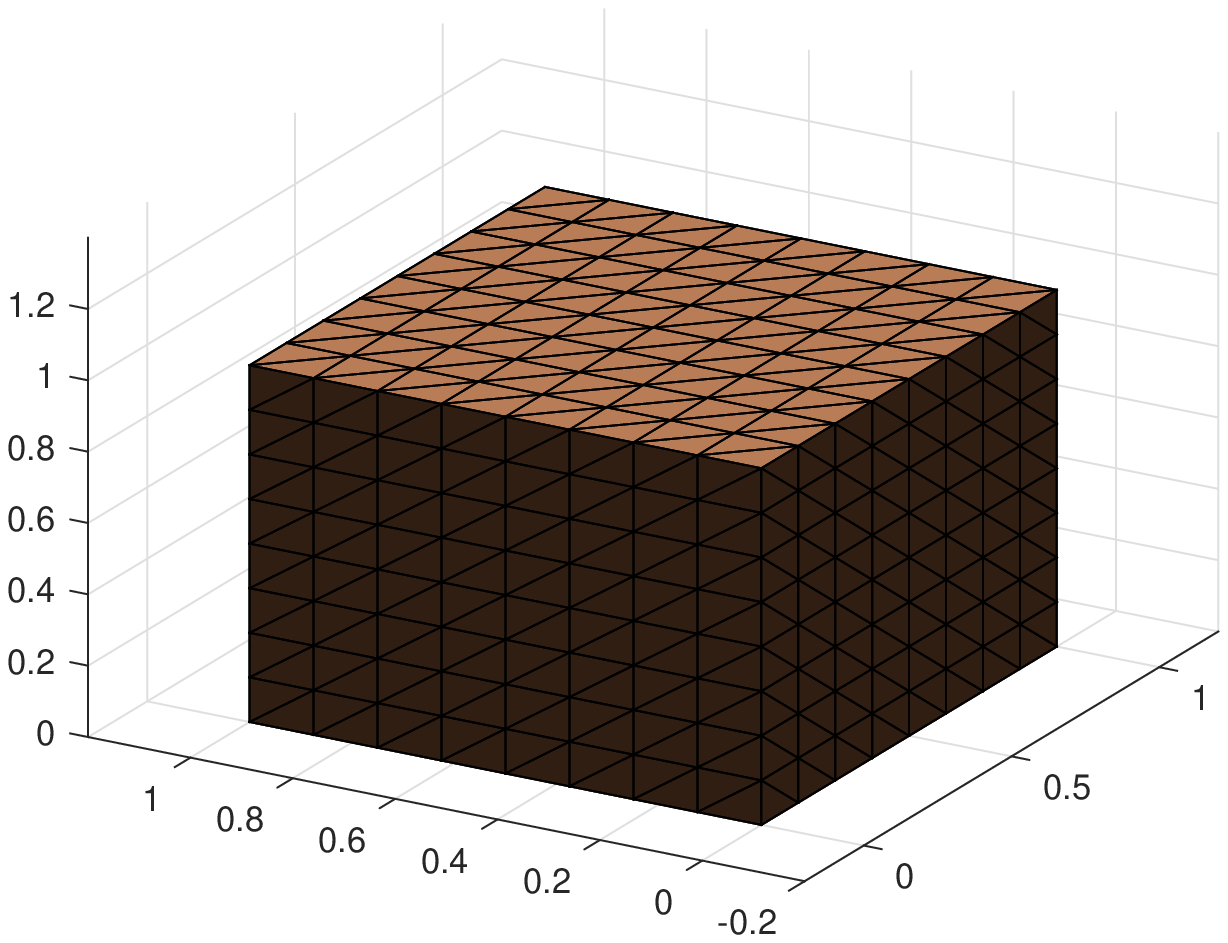}
\includegraphics[scale=0.4]{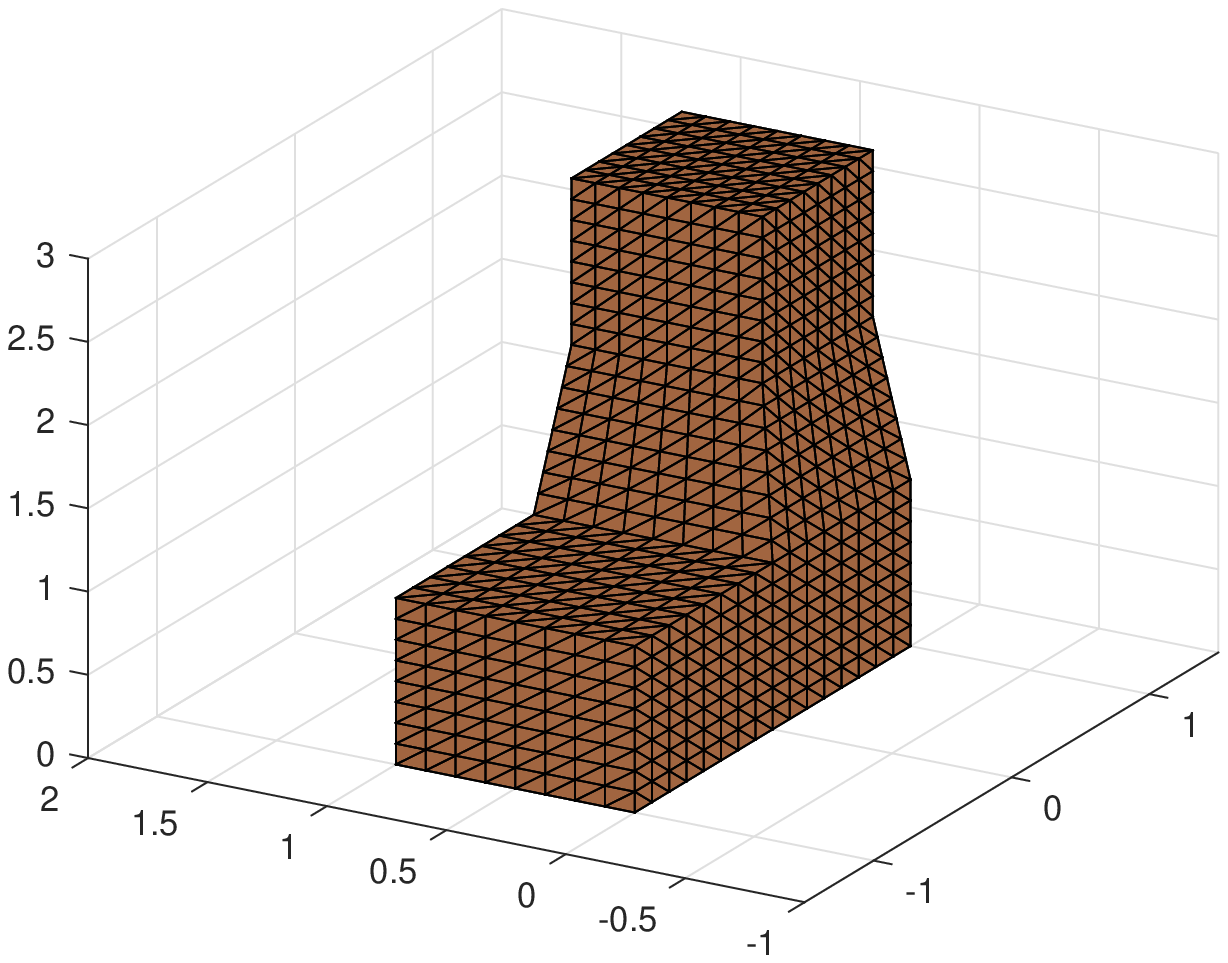}
\caption{Visualization of meshes used for error tests.}
\label{fig:meshvis}
\end{figure}

\noindent{\bf Convergence test.}
The experiments in this part are carried out on a cubic domain
$\Omega = (0,1)^3$ and a non-convex polyhedral domain (we will refer to it as the chimney; see Figure \ref{fig:meshvis}),
the time interval is $[0,T]$ with $T=\frac{3}{2}$, and we aim to estimate the relative $L^2$ errors
\[
E_\sigma:=\frac{\|\bs\sigma_h(T)-\bs\sigma(T)\|_\Omega}{\|\bs\sigma(T)\|_\Omega}, \qquad
E_u:=\frac{\|\bs u_h(T)-\bs u(T)\|_\Omega}{\|\bs u(T)\|_\Omega}.
\]
We consider a non-homogeneous isotropic material
\begin{align*}
\mc A\bs\sigma 
= \frac{1}{2\mu}\bs\sigma -\frac{\lambda}{2\mu(2\mu+3\lambda)} \mr{tr}\,\bs\sigma\,\mb I,
\end{align*}
with Lam\'{e} parameters
\begin{align*}
\lambda = \frac{2+x^2+y^2+z^2}{1+x^2+y^2+z^2},\qquad\mu = 3+\cos(xyz),
\end{align*}
and the mass density is constant $\rho\equiv1$. As exact solution we take $\bs u(\bs x,t):=\bs U(\bs x)H(t)$, where 
\begin{align*}
\bs U=(\cos(\pi x)\sin(\pi y)\cos(\pi z),\ 5x^2yz+4xy^2z+3xyz^2+17,\ \cos(2x)\cos(3y)\cos(z)),
\end{align*}
and the temporal part is $H(t)=t^3(1-t)^2$. The input data $\bs f$ and $\bs g$ are chosen so that \eqref{eq:elwv_pde} is satisfied.
Note that we have chosen the exact solution $\bs u$ that has vanishing initial conditions. This simplifies the calculations of $\bs u_h(0)$ and $\dot{\bs u}_h(0)$ (they are automatically $\bs 0$ by \eqref{eq:elwv_iniv} and \eqref{eq:elwv_HDGt0}). When $\bs u$ does not have vanishing initial conditions, the calculation of $\dot{\bs u}_h(0)$ involves projecting $\dot{\bs u}(0)$ to a space enriched by the $M$-decomposition spaces (the Cockburn-Fu filling), which we have not found an easy way to implement. Finding easier ways of calculating $\dot{\bs u}_h(0)$ for non-vanishing initial conditions will constitute our future works.

For the numerical schemes, we use the HDG+ method \eqref{eq:elwv_HDG} for space discretization and Trapezoidal Rule Convolution Quadrature (TRCQ) (see \cite{Ba:2010,Lu:1988}) for time integration. This is equivalent to using Trapezoidal Rule time-stepping in the semidiscrete system. The time interval $[0,T]$ is equally divided and each timestep is of length $\kappa$. Since the error from the TRCQ is $\mc O(\kappa^2)$, we choose
$\kappa \approx h^{(k+2)/2}$ so that the error from the time discretization does not pollute the order of convergence of the space discretization. 

From Figure \ref{fig:ord_conv} or Table \ref{tab:ord_conv}, 
we observe that the orders of convergence for $\bs\sigma_h(T)$ and $\bs u_h(T)$ are $\mc O(h^{k+1})$ and $\mc O(h^{k+2})$ respectively, agreeing the estimates in 
Theorem \ref{th:elwv_th1} and Theorem \ref{th:elwv_spconv}.

\begin{table}[ht]
\centering
\begin{tabular}{|c|c|c|c|c|c|c|c|c|c|c|c|}
\hline
&&\multicolumn{4}{c|}{Cube} & 
 \multicolumn{4}{c|}{Chimney}\\
\hline
 & & \multicolumn{2}{c|}{$E_\sigma$} & 
 \multicolumn{2}{c|}{$E_u$}&\multicolumn{2}{c|}{$E_\sigma$} & 
 \multicolumn{2}{c|}{$E_u$}\\
\hline
$k$ & $h$ & Error & Order & Error & Order
& Error & Order & Error & Order\\
\hline
	& 1.6329 & 2.82E-1 & -    & 4.32E-2 & - & 1.27E-1 & - & 3.93E-2 & -\\	
1	& 0.8164 & 7.06E-2 & 2.00 & 6.16E-3 & 2.81 & 3.12E-2 & 2.02 & 5.36E-3 & 2.87\\
	& 0.4082 & 1.93E-2 & 1.87 & 5.44E-4 & 3.50 & 8.16E-3 & 1.93 & 4.92E-4 & 3.44\\
	& 0.2041 & 4.79E-3 & 2.01 & 5.65E-5 & 3.27 & 1.99E-3 & 2.03 & 5.09E-5 & 3.28\\
\hline
	& 1.6329 & 1.30E-1 & -    & 1.84E-2 & - & 6.44E-2 & - & 1.68E-2 &-\\	
2	& 0.8164 & 1.72E-2 & 2.92 & 1.52E-3 & 3.59 & 6.83E-3 & 3.24 & 1.23E-3 & 3.78\\
	& 0.4082 & 2.31E-3 & 2.89 & 5.92E-5 & 4.68 & 8.96E-4 & 2.93 & 4.74E-5 & 4.70\\
	& 0.2041 & 2.86E-4 & 3.02 & 2.52E-6 & 4.55 & - & - & -& -\\
\hline
	& 1.6329 & 4.36E-2 & -    & 9.50E-3 & - & 2.18E-2 & - & 7.74E-3 & -\\	
3	& 0.8164 & 3.90E-3 & 3.48 & 3.35E-4 & 4.83 & 1.47E-3 & 3.89 & 2.59E-4 & 4.90\\
	& 0.4082 & 2.56E-4 & 3.93 & 6.34E-6 & 5.72 & 9.54E-5 & 3.94 & 4.88E-6 & 5.73\\
\hline
\end{tabular}
\caption{History of convergence for $\bs\sigma_h(T)$ and $\bs u_h(T)$ with sequence of uniform refinements in space and over-refinements in time. }
\label{tab:ord_conv}
\end{table}

\begin{figure}[ht]
\centering
\includegraphics[scale=0.75]{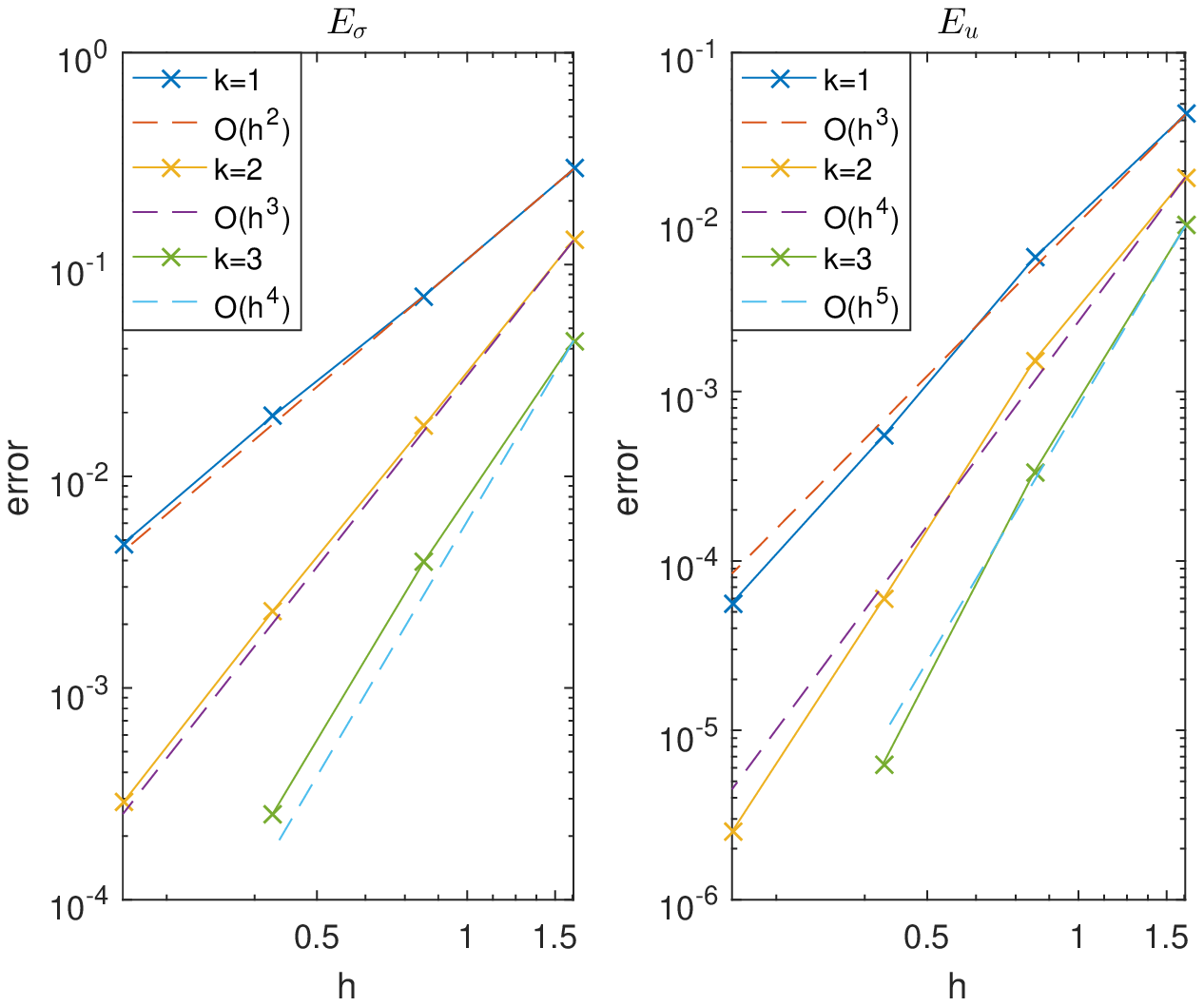}
\includegraphics[scale=0.75]{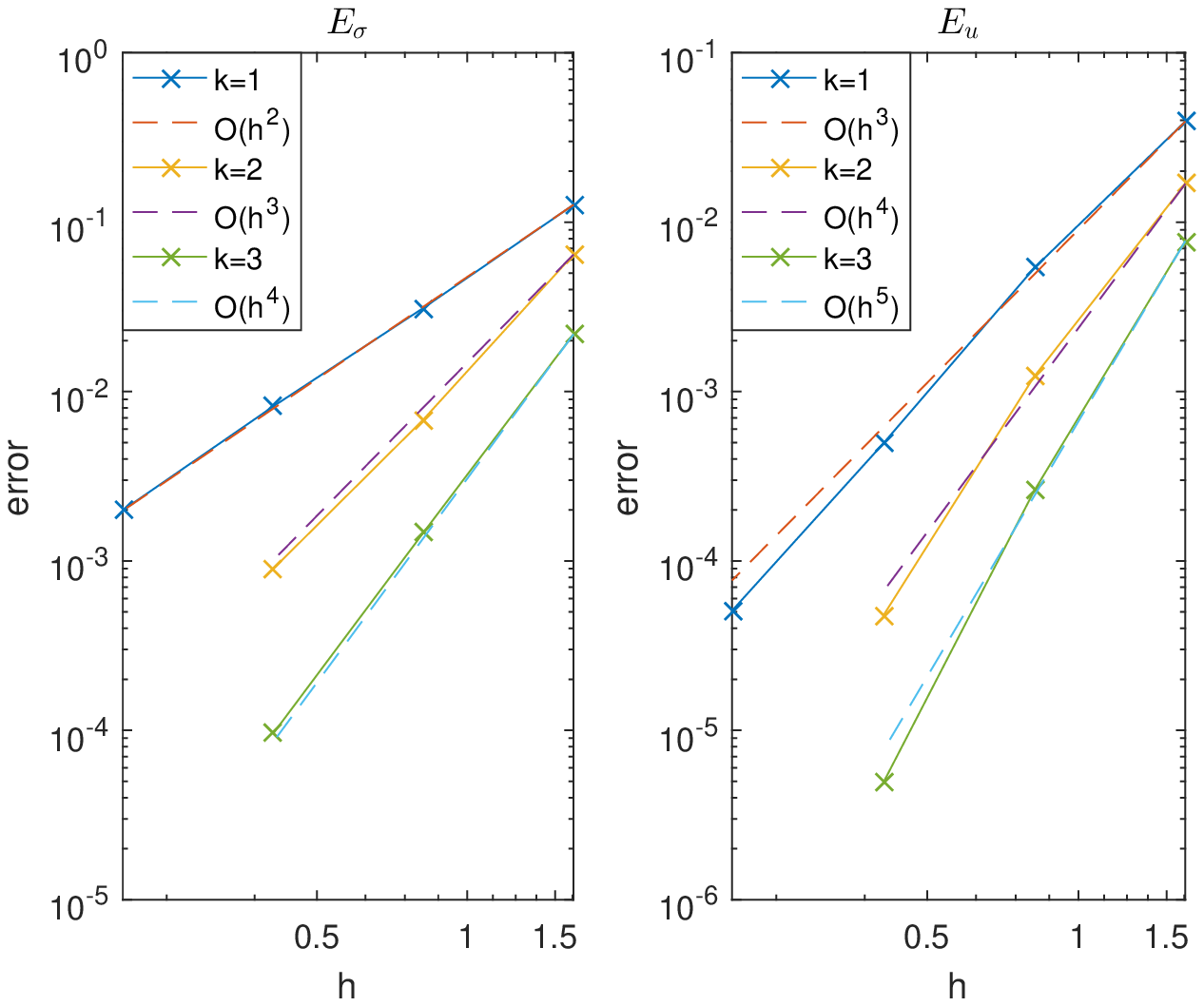}
\caption{History of convergence for $\bs\sigma_h(T)$ and $\bs u_h(T)$ with sequence of uniform refinements in space and over-refinements in time. Top two figures for cubic meshes and bottom two for chimney.}
\label{fig:ord_conv}
\end{figure}

\noindent{\bf Locking test.} Note that the HDG+ method was shown to be free from volumetric locking for steady state system in \cite{QiShSh:2018}. We here conduct some locking experiments for elastic waves. Most of the experiment settings will be the same as the convergence test. Let us just mention the differences. We shall conduct two experiments (denoted by A and B) on the cubic domain where the Lam\'{e} parameters are chosen as $(\lambda,\mu)=(1.5\times10^2,3)$ for test A and $(\lambda,\mu)=(1.5\times10^4,3)$ for test B.
Their corresponding Poisson's ratios can be easily calculated: $\nu\approx0.49$ for test A and $\nu\approx0.4999$ for test B. For the exact solutions, the temporal part $H(t)$ is unchanged while the spacial part is changed to
\begin{align*}
\bs U=\left(-x^2(x-1)^2y(y-1)(2y-1)z(1-z),\ y^2(y-1)^2x(x-1)(2x-1)z(1-z),\ 0\right).
\end{align*}
This choice of the exact solution is a simple 3D adaptation of those used in \cite{BeLi:1979,QiShSh:2018} for locking experiments in 2D. We collect the history of convergence for $\bs\sigma_h(T)$ and $\bs u_h(T)$ in Table \ref{tab:ord_conv_locking}.

\begin{table}[ht]
\centering
\begin{tabular}{|c|c|c|c|c|c|c|c|c|c|c|c|}
\hline
&&\multicolumn{4}{c|}{$\nu\approx0.49$} & 
 \multicolumn{4}{c|}{$\nu\approx0.4999$}\\
\hline
 & & \multicolumn{2}{c|}{$E_\sigma$} & 
 \multicolumn{2}{c|}{$E_u$}&\multicolumn{2}{c|}{$E_\sigma$} & 
 \multicolumn{2}{c|}{$E_u$}\\
\hline
$k$ & $h$ & Error & Order & Error & Order
& Error & Order & Error & Order\\
\hline
  &  1.63e+00  &  8.21e-01  &  -   &    2.29e+00  &  - & 
8.25e-01  &  -   &    2.29e+00  &  -\\ 
1 &  8.16e-01  &  4.66e-01  &  0.82   &    4.47e-01  &  2.36 & 
4.69e-01  &  0.82   &    4.46e-01  &  2.36\\ 
  &  4.08e-01  &  1.78e-01  &  1.39   &    5.78e-02  &  2.95 & 
1.79e-01  &  1.39   &    5.75e-02  &  2.95\\ 
  &  2.04e-01  &  4.52e-02  &  1.98   &    7.43e-03  &  2.96 & 
4.54e-02  &  1.98   &    7.41e-03  &  2.96\\ 
\hline
  &  1.63e+00  &  5.19e-01  &  -   &    1.36e+00  &  - &
5.20e-01  &  -   &    1.36e+00  &  -\\  
2 &  8.16e-01  &  1.98e-01  &  1.39   &    1.24e-01  &  3.45 &
2.00e-01  &  1.38   &    1.24e-01  &  3.45\\ 
  &  4.08e-01  &  3.70e-02  &  2.42   &    7.61e-03  &  4.03 &
3.73e-02  &  2.42   &    7.58e-03  &  4.03\\ 
  &  2.04e-01  &  4.79e-03  &  2.95   &    4.33e-04  &  4.14 &
4.82e-03  &  2.95   &    4.31e-04  &  4.14\\ 
\hline
\end{tabular}
\caption{History of convergence for $\bs\sigma_h(T)$ and $\bs u_h(T)$ with sequence of uniform refinements in space and over-refinements in time. All experiments are conducted on the unit cube.}
\label{tab:ord_conv_locking}
\end{table}

From Table \ref{tab:ord_conv_locking}, we observe no degeneration of the convergence rates for $\bs\sigma_h(T)$ and $\bs u_h(T)$ as the Poisson's ratio $\nu$ approaches the incompressible limit $0.5$. This supports that the HDG+ method is volumetric locking free for elastic waves.

\section{Extensions and conclusion}
For the sake of conciseness, we have limited the discussion to the setting of elastic problems on simplicial meshes. However, the tools we introduce here can be extended to construct HDG projections in a much wider setting. We next discuss three possible extensions.

\begin{enumerate}
\renewcommand{\theenumi}{\Alph{enumi}}
\item {\bf Elasticity on polyhedral meshes.} The HDG+ projection for elasticity can be extended to a projection on polyhedral elements. One way to achieve this is to construct the projection directly on the physical element, instead of first constructing the projection on the reference element and then using a push-forward operator (this is what we did in this paper). This alternative approach is feasible since the $M$-decomposition can be applied on general polyhedral elements (see \cite{CoFuSa:2017,CoFu:2018}). 

\item {\bf HDG+ for elliptic diffusion.}
The HDG+ projection can be constructed for steady-state diffusion. We have explored this in \cite{DuSa:2019} for simplicial meshes. For general polyhedral meshes, the projection can be obtained by following a similar procedure as demonstrated in Figure \ref{fig:flow_chart}. It can be summarized in three steps: (1) Enrich the approximation space for the flux so that the $M$-decomposition is achieved; (2) Define an extended projection by enforcing the weak-commutativity property on the homogeneous polynomial space of order $k+1$ (similar to \eqref{eq:3.1e}); (3) Define a composite projection and collect the remainder term on the boundary of the element. 

\item {\bf Standard HDG for elasticity.} We can also construct a projection for the standard HDG method for elasticity (where polynomial spaces of order $k$ are used for both the stress and the displacement).
This is achieved by defining the composite projection and the boundary reminder before constructing the extended projection. To be more specific, suppose
\begin{align*}
\bs\Pi^M: H^1(K;\mbb R_\mr{sym}^{3\times3})\times H^1(K;\mbb R^3)&\rightarrow \mc P_k(K;\mbb R_\mr{sym}^{3\times3})\oplus\Sigma_\mr{fill}(K)\times \mc P_{k}(K;\mbb R^3)\\
(\bs\sigma,\bs u)&\mapsto (\bs\Pi_\sigma^M(\bs\sigma,\bs u),\bs\Pi_u^M(\bs\sigma,\bs u))
\end{align*}
is the $M$-decomposition associated projection. We then define
\begin{align*}
\bs\Pi(\bs\sigma,\bs u)&:=(\mr P_k\bs\Pi_\sigma^M(\bs\sigma,\bs u),\bs\Pi_u^M(\bs\sigma,\bs u)),\\
\bs\delta&:=\bs\Pi_\sigma^M(\bs\sigma,\bs u)\cdot\mb n-\mr P_k\bs\Pi_\sigma^M(\bs\sigma,\bs u)\cdot\mb n.
\end{align*}
This completes the definition of the projection (and the associate boundary remainder) for the standard HDG method for elasticity. The rest of the  error analysis follows the exact same procedure we have discussed in this paper. For instance, for the steady-state problem, we obtain the same energy estimate \eqref{eq:st_est}, namely,
\begin{align*}
\|\bs\varepsilon_h^\sigma\|_{\mc A}^2+\|\mathrm P_M(\bs\varepsilon_h^u-\bs\varepsilon_h^{\widehat u})\|_{\bs\tau}^2
\le 
\|\bs e_\sigma\|_{\mc A}^2+\|\bs\delta\|_{\bs\tau^{-1}}^2.
\end{align*}
In this case, the term $\|\bs\delta\|_{\bs\tau^{-1}}$ has an $\mc O(h^{k+1/2})$ convergence rate because
 $\bs\tau=\mc O(1)$. We thus recover the existing suboptimal estimates  obtained in \cite{FuCoSt:2015} in a unified way by using the same arguments. The only difference here is a simple change of the projection. 

\end{enumerate}

To conclude, we have proposed some new mathematical tools for the error analysis of HDG methods. 
The two most important ones are: (1) the extended projection constructed by enforcing the weak commutativity on a higher order polynomial space (see \eqref{eq:3.1e}); (2) the boundary remainder reflecting the discrepancy between the normal traces of the $M$-decomposition associate projection and a composite projection (see \eqref{eq:3.10a}). These tools allow us to flexibly devise projections for more variants of HDG methods. We have demonstrated this by constructing the projection for the Lehrenfeld-Sch\"{o}berl HDG (HDG+) method for elasticity. By using the projection, we are able to recover the existing error estimates in a more concise analysis for the steady-state and the time-harmonic elastic problems. For elastic waves,  
we have successfully used the projection to devise a semi-discrete HDG+ scheme (the initial velocity of the semi-discrete scheme is defined by using the HDG+ projection) and prove its uniformly-in-time optimal convergence. Improving the generality of the tools will constitute the future works. 

\bibliographystyle{plain}
\bibliography{references}

\end{document}